\newcommand{\RR}{\mathbb{R}}
\newcommand{\CC}{\mathbb{C}}
\newcommand{\NN}{\mathbb{N}}
\newtheorem{theorem}{Theorem}[section]
\newtheorem{proposition}[theorem]{Proposition}
\newtheorem{Lem}[theorem]{Lemma}
\newtheorem{definition}[theorem]{Definition}
\newtheorem{Cor}[theorem]{Corollary}
\newtheorem{Claim}[theorem]{Claim}
\theoremstyle{remark}
\newtheorem{Rq}[theorem]{Remark}
\titleformat{\section}[block]
{\centering\scshape\Large\bfseries}
{\normalfont\textbf{\thesection}.}{0.5em}{}
\titleformat{\subsection}[runin]
{\normalfont\bfseries}
{\thesubsection.}{0.5em}{}[.]
\titlespacing{\subsection}{1pc}{1.5ex plus .1ex minus .2ex}{0.5pc}
\numberwithin{equation}{section}
\title[Non dispersive solutions of the gKdV equations]{Non dispersive solutions of  the generalized KdV equations are typically multi-solitons}
\author{Xavier Friederich}
\address{Institut de Recherche Mathématique Avancée UMR 7501, Université de Strasbourg, Strasbourg, France}
\email{friederich@math.unistra.fr}
\begin{document}

\begin{abstract}
We consider solutions of the generalized Korteweg-de Vries equations (gKdV) which are non dispersive in some sense (in the spirit of \cite{asympstab1}) and which remain close to multi-solitons. We show that these solutions are necessarily pure multi-solitons. For the Korteweg-de Vries equation (KdV) and the modified Korteweg-de Vries equation (mKdV) in particular, we obtain a characterization of multi-solitons and multi-breathers in terms of non-dispersion.
\end{abstract}

\maketitle

\let\thefootnote\relax\footnotetext{2010 \textit{Mathematics Subject Classification:} Primary 35B53, 35Q53; Secondary 35B40, 35B65.}
\let\thefootnote\relax\footnotetext{ \textit{Key words:} generalized KdV equations; solitons; multi-solitons.}

\section{Introduction}

\subsection{Setting of the problem and known results}

We consider the generalized Korteweg-de Vries equations
\begin{gather}\tag{gKdV}
\begin{dcases}
\partial_t u+\partial_x(\partial^2_xu+u^p)=0\\
u(0)=u_0\in H^1(\RR)
\end{dcases}
\end{gather}
where $(t,x)$ are elements of $\RR\times\RR$ and $p>1$ is an integer.\\

Recall that the Cauchy problem for (gKdV) is locally well-posed in $H^1(\RR)$ from a standard result by Kenig, Ponce and Vega \cite{kenig} and that the two following quantities are conserved for each solution $u$ of (gKdV) for all $t$:
\begin{itemize}
\item (the $L^2$-mass) $\displaystyle\int_\RR u^2(t,x)\;dx$
\item (the energy) $\displaystyle\int_\RR\left\{\frac{1}{2} u_x^2-\frac{1}{p+1}u^{p+1}\right\}(t,x)\;dx$.
\end{itemize}

In addition, the set of solutions of (gKdV) is conserved under scaling transformation 
\[
u\longmapsto \left((t,x)\mapsto \lambda^{\frac{2}{p-1}}u\left(\lambda^3t,\lambda x\right)\right),\]
for all $\lambda>0$, and the $\dot{H}^{\sigma(p)}$-norm is invariant under this transformation, where $\sigma(p):=\frac{1}{2}-\frac{2}{p-1}$. Let us recall that the global dynamics of the solutions depends on the sign of $\sigma(p)$. The case $\sigma(p)<0$ that is $1<p<5$, is called $L^2$-subcritical, and all $H^1$-solutions of (gKdV) are then global (in time) and $H^1$-uniformly bounded. If $\sigma(p) =0$, that is $p=5$, we are in $L^2$-critical case and solutions might blow up in finite time \cite{mmcritical,martel2002,mmrI,mmrII,mmrIII}. In the $L^2$-supercritical case, corresponding to  $\sigma(p)>0$ (or $p>5$), much less is known but finite time blow up is expected: existence of $p^*>5$ and of blow-up solutions for all $p\in(5,p^*)$ are proven in \cite{lan}. \\

Moreover it is well-known that (gKdV) admits a family of explicit traveling wave solutions indexed by $\RR^*_+\times\RR$. Let $Q$ be the unique (up to translation) positive solution in $H^1(\RR)$ (known also as \textit{ground state}) to the following stationary elliptic problem associated with (gKdV)
\[ Q''+Q^p=Q, \]
given by the explicit formula
\[ Q(x)=\left(\frac{p+1}{2\mathrm{ch}^2\left(\frac{p-1}{2}x\right)}\right)^{\frac{1}{p-1}}. \]
Then for all $c_0>0$ (velocity parameter) and $x_0\in\RR$ (translation parameter), 
\begin{equation}\label{soliton} 
R_{c_0,x_0}(t,x)=Q_{c_0}(x-c_0t-x_0)\end{equation} 
is a global traveling wave solution of (gKdV) classically named \textit{soliton} solution, where $Q_{c_0}(x)=c_0^{\frac{1}{p-1}}Q(\sqrt{c_0}x).$ It is orbitally stable if and only if $p<5$ ($L^2$-subcritical case) (see Weinstein \cite{weinstein}, Bona, Souganidis and Strauss \cite{bona}, Grillakis, Shatah and Strauss \cite{grillakis}, and Martel and Merle \cite{mm_instabilite}). \\

Solitons are special objects which enjoy very specific properties. Let us recall the following rigidity result, which roughly asserts that non dispersive solutions to (gKdV) which are close to solitons are actually exactly solitons.

\begin{theorem}[Liouville property near a soliton; Martel and Merle \cite{asympstabcrit,asympstab}]\label{liouville}
Let $c_0>0$. There exists $\alpha>0$ such that if $u\in\mathscr{C}(\RR,H^1(\RR))$ is a solution of (gKdV) satisfying, for some $\mathscr{C}^1$ function $y:\RR\to\RR$,
\begin{equation}\label{hypstab}
\text{(closeness to a soliton)}\qquad
\forall\;t\in\RR,\qquad \|u(t,\cdot+y(t))-Q_{c_0}\|_{H^1}\leq\alpha,
\end{equation}
\begin{equation}\label{L2compsoliton}
\text{(non-dispersion)}\qquad\forall\;\varepsilon>0,\:\exists\;R>0,\:\forall\;t\in\RR,\qquad\int_{|x|>R}u^2(t,x+y(t))\;dx\leq\varepsilon, 
\end{equation}
then there exist $c_1>0$, $x_1\in\RR$ such that $$\forall\;t,x\in\RR,\qquad u(t,x)=Q_{c_1}(x-x_1-c_1t).$$
\end{theorem}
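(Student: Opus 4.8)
The plan is to establish this rigidity result via the now-classical strategy of Martel and Merle: combine modulation theory with a Liouville-type argument built on a virial (monotonicity) identity and a spectral/positivity estimate. I will sketch the structure without grinding through the technical inequalities.

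First I would set up \emph{modulation}. Using the closeness hypothesis \eqref{hypstab} and the orbital stability theory around the soliton, I would decompose the solution as $u(t,x) = Q_{c(t)}(x - \rho(t)) + \varepsilon(t, x - \rho(t))$, choosing $\mathscr{C}^1$ functions $c(t) > 0$ and $\rho(t)$ so that the remainder $\varepsilon$ satisfies suitable orthogonality conditions (typically $\int \varepsilon \, Q_{c}= 0$ and $\int \varepsilon \, \partial_x Q_{c} = 0$, or the analogous conditions adapted to the scaling and translation directions). These orthogonality conditions make $\varepsilon$ "transversal" to the two-parameter soliton manifold, which both controls $\|\varepsilon\|_{H^1}$ by $\alpha$ and yields differential control on the modulation parameters: $|\dot c| + |\dot\rho - c| \lesssim \|\varepsilon\|_{L^2_{\mathrm{loc}}}$.

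Next I would run the \emph{virial/monotonicity} argument, which is the heart of the proof and the main obstacle. The idea is to test the equation for $\varepsilon$ against a well-chosen weighted quantity, for instance $\int \psi \, \varepsilon^2$ with a monotone weight $\psi$ (a smoothed step, exploiting the one-directional propagation of gKdV), to produce a functional whose time derivative is controlled from below by a coercive quadratic form in $\varepsilon$ localized around the soliton. The delicate points are: (i) choosing the weight so that the favorable transport term dominates the error terms coming from the nonlinearity and from the motion of $\rho$; and (ii) establishing the \emph{coercivity} of the resulting quadratic form modulo the two orthogonality directions (a Weinstein-type spectral estimate for the linearized operator $L = -\partial_x^2 + 1 - p\,Q^{p-1}$), which requires the smallness $\alpha$ to rule out the degenerate directions. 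I would then integrate this monotonicity relation in time and use the \emph{non-dispersion} hypothesis \eqref{L2compsoliton}, which guarantees no $L^2$-mass escapes to spatial infinity, to convert the one-sided time-averaged bound into the conclusion that $\int \psi \, \varepsilon^2 \equiv 0$, and hence $\varepsilon \equiv 0$.

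Finally, once $\varepsilon \equiv 0$, the decomposition forces $u(t,\cdot) = Q_{c(t)}(\cdot - \rho(t))$ exactly; plugging this back into (gKdV) and matching it against the soliton flow shows $c(t)$ is constant, say $c_1$, and $\rho(t) = c_1 t + x_1$ for some $x_1 \in \RR$, giving $u(t,x) = Q_{c_1}(x - x_1 - c_1 t)$ as claimed. I expect the genuine difficulty to lie entirely in the virial step: making the monotonicity functional and its coercivity survive the supercritical regime $p \geq 5$ (where the soliton is unstable and the spectral gap is more delicate) is exactly where the quantitative smallness of $\alpha$ and the precise choice of weight $\psi$ must be orchestrated, and I would likely need to treat the critical case $p=5$ and the subcritical case separately, following the two distinct references \cite{asympstab} and \cite{asympstabcrit}.
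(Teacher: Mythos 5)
First, a point of context: the paper does not prove Theorem \ref{liouville} at all. It is imported as a black box from Martel and Merle \cite{asympstabcrit,asympstab}, and in fact the paper uses it only indirectly (its own multi-soliton rigidity theorem rests on the asymptotic stability results which are themselves consequences of Theorem \ref{liouville}). So your attempt can only be compared with the original Martel--Merle proofs, whose overall architecture --- modulation, monotonicity, a spectral positivity estimate, and the non-dispersion hypothesis --- you have correctly named.

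Measured against that architecture, however, your sketch has two genuine gaps, located exactly at the decisive steps. First, you conflate two different functionals. A bounded smoothed-step weight $\psi$ gives the Kato-type monotonicity of localized mass; in Martel--Merle this is used, together with \eqref{L2compsoliton}, to prove uniform-in-time \emph{exponential spatial decay} of the solution --- it does not produce a coercive quadratic form in $\varepsilon$. The coercive object is the genuine virial quantity $\int y\,\varepsilon^2\,dy$, with an \emph{unbounded} weight, which is only finite and manageable because of the exponential decay established in the previous step; this is the actual role of the non-dispersion hypothesis, and it is absent from your outline. Second, even granting a virial identity with coercive right-hand side, the deduction ``integrate in time and conclude $\int\psi\,\varepsilon^2\equiv 0$, hence $\varepsilon\equiv 0$'' is not valid: a one-sided differential inequality for a functional that is merely bounded on $\RR$ yields only $\int_{\RR}\|\varepsilon(t)\|_{L^2_{loc}}^2\,dt<\infty$, i.e.\ smallness of $\varepsilon$ along sequences $t_n\to\pm\infty$, not vanishing at any finite time. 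Upgrading this to $\varepsilon\equiv 0$ is where the real work lies: in the subcritical case one propagates the smallness from $t_n\to-\infty$ forward by orbital stability, while in the critical case (where stability fails) Martel and Merle must pass through a linear Liouville theorem and a transformed (dual) problem. Relatedly, the positivity you invoke is \emph{not} Weinstein's coercivity of $L=-\partial_x^2+1-pQ^{p-1}$ modulo its kernel: the quadratic form produced by the virial computation is a different operator, whose positivity up to finitely many directions is the hardest part of \cite{asympstab} and fails in naive form in the critical case \cite{asympstabcrit}. As written, your plan names the right ingredients but omits or misidentifies the two steps that constitute the proof.
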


This striking result has its own interest of course, but we emphasize that it is also a key ingredient to prove asymptotic stability of (gKdV) solitons (we refer to \cite{asympstabcrit,asympstab0,asympstab}).
We highlight the fact that this result applies in each mass subcritical, critical, and supercritical case by requiring the solution $u$ to remain close to a soliton (up to translation) for all times (\ref{hypstab}). In the $L^2$-subcritical case where solitons are known to be stable, (\ref{hypstab}) can be relaxed to hold only at $t=0$. \\

Finally let us note that solitons play a fundamental role in the study and the understanding of the (gKdV) flow; the important soliton resolution conjecture asserts that any solution with generic initial condition behaves as a sum of solitons plus a radiative-dispersive term as time goes to infinity. In this spirit, built upon solitons, we are interested in other solutions to our problem, namely multi-soliton solutions, defined as follows.

\begin{definition} \label{def:multi_sol}
Let $N\geq 1$ and consider $N$ solitons $R_{c_i,x_i}$ as in \eqref{soliton} with speeds $0<c_1<\cdots<c_N$. A \textit{multi-soliton} \textit{in $+\infty$} (resp. \textit{in $-\infty$}) associated with the $R_{c_i,x_i}$ is an $H^1$-solution $u$ of (gKdV) defined in a neighborhood of $+\infty$ (resp. $-\infty$) and such that \begin{equation}\label{multisol}
\left\|u(t)-\sum_{i=1}^NR_{c_i,x_i}(t)\right\|_{H^1}\rightarrow 0,\qquad \text{as } t\to +\infty \text{ (resp. as } t\to-\infty\text{)}.
\end{equation}
\end{definition}

Multi-solitons are known to exist for all $p>1$; they are even explicit for $p=2$ (KdV) \cite[section 16]{miura} and for $p=3$ (mKdV) \cite[Chapter 5, formula (5.5)]{schuur}. What is more, the classification of the multi-solitons of (gKdV) is complete. Let us gather the main results.

\begin{theorem}[Martel \cite{martel}; Côte, Martel and Merle \cite{cote}; Combet \cite{combetgkdv}]\label{th_multi_connu}
Let $p>1$ be an integer and let $N\ge 1$, $0<c_1^+<\dots<c_N^+$, and $x_1^+,\dots, x_N^+\in\RR$. \\
If $p\le 5$, there exists $T_0\ge 0$ and a unique multi-soliton $u\in\mathscr{C}([T_0,+\infty),H^1(\RR))$ associated with the $R_{c_i^+,x_i^+}$, $i\in\{1,\dots,N\}$.\\
If $p>5$, there exists a one-to-one map $\Phi$ from $\RR^N$ to the set of all $H^1$-solutions of (gKdV) defined in a neighborhood of $+\infty$ such that $u$ is a multi-soliton in $+\infty$ associated with the $R_{c_i^+,x_i^+}$ if and only if there exist $\lambda\in\RR^N$ and $T_0\ge 0$ such that $u_{|[T_0,+\infty)}=\Phi(\lambda)_{|[T_0,+\infty)}$. \\
Moreover, in each case, $u$ belongs to $\mathscr{C}([T_0,+\infty),H^s(\RR))$ for all $s\ge 0$, and there exist $\theta>0$ and positive constants $C_s$ such that for all $s\ge 0$, for all $t\geq T_0$,
\begin{equation}\label{CVexp}
\left\|u(t)-\sum_{i=1}^NR_{c_i^+,x_i^+}(t)\right\|_{H^s}\le C_se^{-\theta t}.
\end{equation}
\end{theorem}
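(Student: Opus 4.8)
The plan is to establish Theorem~\ref{th_multi_connu}, which combines existence, uniqueness (or a complete parametrization), and exponential convergence of multi-solitons. I would separate the argument into the construction of a multi-soliton, the convergence estimate, and the classification according to the subcritical/critical ($p\le 5$) versus supercritical ($p>5$) dichotomy.

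\textbf{Construction and exponential convergence.} The core technique is a compactness/energy argument run backward from $+\infty$. First I would fix a sequence of times $T_n\to+\infty$ and solve (gKdV) backward from the explicit final data $\sum_{i=1}^N R_{c_i^+,x_i^+}(T_n)$, obtaining approximate solutions $u_n$ on $[T_0,T_n]$. The heart of the matter is a \emph{uniform} bootstrap estimate of the form
\begin{equation}\label{bootstrap}
\left\|u_n(t)-\sum_{i=1}^N R_{c_i^+,x_i^+}(t)\right\|_{H^1}\le C e^{-\theta t},\qquad t\in[T_0,T_n],
\end{equation}
with $C,\theta,T_0$ independent of $n$. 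To prove \eqref{bootstrap} I would decompose $u_n$ around the sum of solitons, introduce modulation parameters to cancel the unstable and neutral directions, and control the remainder using a suitably weighted energy-mass functional, exploiting that the solitons have distinct speeds $c_1^+<\cdots<c_N^+$ so their supports separate linearly in time (the spatial gaps grow like $(c_{i+1}^+-c_i^+)t$). The coercivity of the linearized operator around each $Q_{c_i}$ on the space orthogonal to the instability and scaling/translation directions, combined with a monotonicity (almost-monotonicity of localized mass, in the spirit of Kato-type identities) to handle interactions and the $L^2$-flux to the left, yields the exponential decay. Passing to the limit $n\to\infty$ along a weakly convergent subsequence in $H^1$ produces a genuine solution $u$ satisfying \eqref{CVexp} for $s=1$; the higher-regularity estimates follow by propagating the same weighted-energy argument at each Sobolev level $s$, using that $Q$ is Schwartz and the nonlinearity is smooth.

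\textbf{Classification.} In the subcritical/critical range $p\le 5$ the linearized operator around each soliton has no unstable eigenvalue, so the backward construction has no free parameter to adjust and the modulation argument forces uniqueness: if $u,\tilde u$ are two multi-solitons for the same data, their difference satisfies an estimate like \eqref{bootstrap} but with both endpoints, and letting the comparison time tend to $+\infty$ shows the difference is zero. When $p>5$ each soliton is linearly unstable, contributing one unstable direction $Y_i^+$; the general decaying solution of the linearized flow around the $i$th soliton carries a one-parameter stable-manifold freedom, so the set of multi-solitons is parametrized by $\lambda\in\RR^N$. I would make this precise by a fixed-point argument in a weighted space encoding \eqref{CVexp}, showing the map $\Phi$ that sends the vector of stable-direction coefficients to the resulting solution is well-defined, injective, and exhausts all multi-solitons up to a shift of the base time $T_0$.

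\textbf{Main obstacle.} The principal difficulty is obtaining the uniform estimate \eqref{bootstrap} in the presence of soliton interactions and, in the supercritical case, of the unstable modes. Controlling the $N$ coupled modulation equations and closing the bootstrap requires that the monotonicity functionals dominate both the nonlinear interaction terms (which decay exponentially thanks to the separation of speeds) and the contribution of the instability directions, which must be absorbed either by the topological/fixed-point selection of the stable manifold (for $p>5$) or shown to vanish identically (for $p\le 5$). Assembling these localized coercivity estimates into a single monotone quantity that decays exponentially, uniformly in $n$, is the technical crux; the higher-$H^s$ bounds and the $n\to\infty$ limit are comparatively routine once \eqref{bootstrap} is in hand.
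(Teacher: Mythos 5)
This statement (Theorem \ref{th_multi_connu}) is not proved in the paper at all: it is a background result imported from the cited works of Martel \cite{martel} (existence and uniqueness for $p\le 5$), C\^ote--Martel--Merle \cite{cote} (existence for $p>5$) and Combet \cite{combetgkdv} (classification for $p>5$), so there is no in-paper proof to compare against. Judged against those references, your sketch is a faithful outline of how the actual proofs go: backward resolution from the final data $\sum_i R_{c_i^+,x_i^+}(T_n)$ along a sequence $T_n\to+\infty$, a uniform exponential bootstrap obtained from modulation, localized coercivity of the linearized energy and Kato-type almost-monotonicity of localized mass, weak compactness to extract the limiting solution, and, in the supercritical case, the $N$ Pego--Weinstein unstable directions producing the $\RR^N$-parametrization of Combet.

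Two steps are glossed over that are genuine, separate pieces of work in the literature. First, your uniqueness argument for $p\le 5$ assumes that an \emph{arbitrary} pair of multi-solitons satisfies an exponential estimate of bootstrap type; but the definition of a multi-soliton only provides $o(1)$ convergence in $H^1$, and upgrading this to exponential convergence for any multi-soliton (not merely the one constructed by the compactness scheme) is a nontrivial intermediate theorem — it is Martel's Proposition 4, and it is exactly the statement that the present paper refines in its Appendix as Proposition \ref{improvement}. Without that upgrade, the comparison argument "letting the comparison time tend to $+\infty$" does not close, because the error terms are not integrable in time. Second, your dichotomy "no unstable eigenvalue for $p\le5$, hence no free parameter" is too coarse at the critical exponent $p=5$: the soliton there is unstable (albeit not through an exponential linear mode) and the quadratic form degenerates, so the construction and the uniqueness proof must modulate the \emph{scaling} parameter in addition to translation and impose the extra orthogonality condition $\int_\RR \epsilon\,\tilde R_j^3\,dx=0$ — this is precisely the modification the paper records in the Remark of its Appendix. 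These are refinements of your outline rather than contradictions of it, but a complete proof would have to supply both.
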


\noindent In the $L^2$-subcritical case (like solitons), sums of decoupled and ordered solitons are stable in $H^1(\RR)$,  even asymptotically stable (Martel, Merle and Tsai \cite{tsai} and Martel and Merle \cite{asympstab}), and  so do multi-solitons.

\subsection{Main results}

Several properties available for solitons have been adapted or even extended to multi-solitons. This article precisely takes this step since it aims at providing an analogue of the rigidity property of Theorem \ref{liouville} in the multi-soliton case.  We consider solutions of (gKdV) that are non dispersive in some sense and uniformly close to the sum of $N$ solitons, and show that they are exact multi-solitons.

\begin{theorem}[Liouville property near a multi-soliton]\label{rigidite}
Let $u$ be a solution of (gKdV) which belongs to $\mathscr{C}([0,+\infty),H^1(\RR))$. Assume the existence of $\rho>0$ such that 
\begin{equation}\label{L2compactnessprop}
\forall\;\varepsilon>0,\:\exists\; R_\varepsilon>0,\:\forall\;t\ge 0,\qquad\int_{x<\rho t-R_\varepsilon}u^2(t,x)\;dx\leq\varepsilon.
\end{equation}
\noindent Let $N\geq 1$ be an integer and consider $N$ positive real numbers $0<c_1<\cdots<c_N$. There exists $\alpha=\alpha(c_1,\dots,c_N,\rho)>0$ such that the following holds: if there exist $N$ functions $x_1,\ldots,x_N:\RR^+\to\RR$ of class $\mathscr{C}^1$ satisfying \begin{equation}\label{Nsolproche}
\forall\;t\geq 0,\qquad \left\|u(t)-\sum_{i=1}^NQ_{c_i}(\cdot-x_i(t))\right\|_{H^1}\leq\alpha,
\end{equation}
and  
\begin{equation}\label{inegxi}
\forall\;t\geq 0,\;\forall\;i\in\{1,\ldots,N-1\},\qquad
x_{i+1}(t)-x_{i}(t)\geq |\ln\alpha|,\\
\end{equation}
\noindent then $u$ is a multi-soliton (in $+\infty$). In other words, there exist $\theta>0$, $0<c_1^+<\cdots<c_N^+$, $x_1^+,\ldots,x_N^+\in\RR$ and positive constants $C_s$ such that for all $t\ge 0$, \eqref{CVexp} is granted.
\end{theorem}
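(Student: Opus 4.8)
The plan is to reduce Theorem \ref{rigidite} to the single-soliton Liouville property of Theorem \ref{liouville} by decoupling the solitons of the train through monotonicity, and to observe at the outset that the exponential bound \eqref{CVexp} and the higher regularity need not be proved by hand. Indeed, it suffices to show that $u$ is a multi-soliton in the sense of Definition \ref{def:multi_sol}, that is, to produce speeds $0<c_1^+<\cdots<c_N^+$ and shifts $x_1^+,\dots,x_N^+$ with $\|u(t)-\sum_{i=1}^N R_{c_i^+,x_i^+}(t)\|_{H^1}\to 0$ as $t\to+\infty$; the classification Theorem \ref{th_multi_connu} then upgrades this convergence to \eqref{CVexp} automatically. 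Thus the whole task is this $H^1$-convergence.

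First I would set up a modulated decomposition $u(t)=\sum_{i=1}^N Q_{c_i(t)}(\cdot-x_i(t))+\eta(t)$, choosing the $\mathscr{C}^1$ parameters $c_i(t)$ and $x_i(t)$ so that $\eta(t)$ satisfies the standard orthogonality conditions fixing the scaling and translation of each bump. For $\alpha$ small, \eqref{Nsolproche} gives $\|\eta(t)\|_{H^1}\le C\alpha$, slow modulation of the parameters, and—since $c_{i+1}>c_i$ and $x_i'(t)$ stays close to $c_i(t)$—the gaps $x_{i+1}(t)-x_i(t)$ grow at least linearly in $t$. Hence the separation \eqref{inegxi} self-improves, the solitons are exponentially decoupled for all $t\ge 0$, and the interaction terms appearing below are of size $e^{-\gamma t}$ for some $\gamma>0$.

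Second, I would run the gKdV monotonicity machinery. Introduce smooth cutoffs localizing the mass between consecutive bumps and travelling at intermediate speeds $\sigma_i\in(c_{i-1},c_i)$, with the leftmost speed chosen in $(\rho,c_1)$; the meaningful regime is $\rho<c_1$, so that the region $\{x<\rho t-R_\varepsilon\}$ of \eqref{L2compactnessprop} sits to the left of the whole train for large $t$. The almost-monotonicity of the resulting localized masses and energies (with errors $O(e^{-\gamma t})$) combines with the one-sided non-dispersion \eqref{L2compactnessprop}—which forbids the only escape direction available to the dispersive part of the flow, namely leftward radiation—to yield uniform-in-time $L^2$-compactness of $u$ in the frame of the train: no mass leaks to the left (by hypothesis), to the right of the fastest bump, or between bumps (by monotonicity). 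The same estimates force the localized masses and energies to converge, which pins down $c_i(t)\to c_i^+$ with $0<c_1^+<\cdots<c_N^+$ preserved, and $x_i(t)-c_i^+t\to x_i^+$.

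Third comes the heart of the argument: showing $\|\eta(t)\|_{H^1}\to 0$, by a contradiction-and-compactness argument feeding into Theorem \ref{liouville}. Suppose $\limsup_{t\to+\infty}\|\eta(t)\|_{H^1}=\delta_0>0$ and pick $t_n\to+\infty$ attaining it. For each fixed index $i$, consider $u_n(\tau,y):=u(t_n+\tau,\,x_i(t_n)+c_i^+\tau+y)$; using the $H^1$-bound and the $L^2$-compactness just established, extract a limit $v_i\in\mathscr{C}(\RR,H^1(\RR))$, again a solution of (gKdV). Decoupling together with $c_i(t)\to c_i^+$ shows that $v_i$ remains within $\alpha$ of $Q_{c_i^+}$ up to translation for \emph{all} $\tau\in\RR$, so \eqref{hypstab} holds, while the uniform $L^2$-compactness transfers to $v_i$ as the non-dispersion \eqref{L2compsoliton}. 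Theorem \ref{liouville} then forces $v_i$ to be an exact soliton, and the orthogonality conditions carried over to the limit identify it as $Q_{c_i^+}$ with zero shift; hence the remainder vanishes in the limit near each bump, the $L^2$-compactness controls the complementary region, and $\eta(t_n)\to 0$ in $H^1$, contradicting $\delta_0>0$. I expect the main obstacle to lie precisely in steps two and three jointly: converting the one-sided hypothesis \eqref{L2compactnessprop} into genuine two-sided, uniform-in-time $L^2$-compactness in the moving frame (the careful bookkeeping of moving cutoffs and exponentially small interactions), and verifying that each extracted limit $v_i$ meets all hypotheses of Theorem \ref{liouville} for every $\tau\in\RR$—in particular backward in time, where closeness to a single soliton must be derived from the modulation and decoupling estimates rather than assumed.
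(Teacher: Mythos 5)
Your overall architecture (modulation, monotonicity, and a compactness-plus-rigidity argument feeding into Theorem \ref{liouville}) is in the spirit of the machinery behind the paper, but as written it has two genuine gaps, and the first one is fatal to the way you close the proof. You assert in your second step that the convergence of localized masses and energies ``pins down \dots $x_i(t)-c_i^+t\to x_i^+$.'' It does not: those quantities determine the limiting speeds $c_i^+$, but say nothing about the translation parameters. The modulation equations only give $|x_i'(t)-c_i(t)|\lesssim \|\eta(t)\|_{L^2_{\mathrm{loc}}}$, and your third step produces merely the \emph{qualitative} convergence $\|\eta(t)\|_{H^1}\to 0$; without an integrable-in-time rate, $x_i(t)-c_i^+t$ can diverge (logarithmically, say), so $u$ need not converge to any fixed train $\sum_i Q_{c_i^+}(\cdot-c_i^+t-x_i^+)$. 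This is precisely the point where your reduction breaks down: both Definition \ref{def:multi_sol} and Theorem \ref{th_multi_connu} require solitons with \emph{linear} trajectories, so you cannot invoke the classification theorem to get \eqref{CVexp}. The paper devotes its entire appendix (Proposition \ref{improvement}) to exactly this upgrade: a Weinstein-functional and monotonicity bootstrap yielding exponential decay $\|\epsilon(t)\|_{H^1}\le Ce^{-\gamma t}$, whence $x_i'+y_i'-c_i$ is integrable and the shifts converge. Your proposal contains no substitute for this step.

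The second gap is the $H^1$ versus $L^2$ mismatch in the region to the left of (and between) the solitons. The hypothesis \eqref{L2compactnessprop} controls only the $L^2$ mass there, yet your contradiction argument concludes ``the $L^2$-compactness controls the complementary region, and $\eta(t_n)\to 0$ in $H^1$'': nothing in your argument controls $\int u_x^2$ on $\{x\le \beta t\}$, where the hypothesis gives no gradient information and there is no conserved quantity to interpolate with for general $p$. The paper needs its new Proposition \ref{pointwise} (smoothness and pointwise exponential decay for non-dispersive solutions, extending Laurent--Martel) precisely to handle this gradient part, and you never invoke anything of that kind. Finally, note that your steps two and three amount to re-proving the asymptotic stability of multi-solitons, which the paper instead quotes as Theorem \ref{asympto}; that route is legitimate in principle, but its hardest points --- constructing the limit objects $v_i$ and verifying \eqref{hypstab} and \eqref{L2compsoliton} for \emph{all} $\tau\in\RR$ from the one-sided assumption --- are exactly the ones you leave open, so even granting them the two gaps above would remain.
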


\begin{Rq}
In the $L^2$-subcritical case $1<p<5$, as sum of decoupled solitons are stable, assumptions \eqref{Nsolproche} and \eqref{inegxi} can be relaxed to hold only at time $t=0$.
\end{Rq}

This result is a natural extension of Theorem \ref{liouville} to multi-solitons in $+\infty$, which are the only solutions which are non dispersive in the sense \eqref{L2compactnessprop} (and remain close to a sum of solitons): this is a nice dynamical characterization of multi-solitons among solutions to (gKdV). By contraposition, it means that if a solution $u$ remains in large time sufficiently close to a multi-soliton but is \emph{not} a multi-soliton, then it disperses insofar as \eqref{L2compactnessprop} fails. 

\bigskip

We emphasize that, in contrast with the original statement for one soliton (Theorem \ref{liouville}), the non dispersion assumption \eqref{L2compactnessprop} requires the mass to be located essentially for $x \ge \rho t$ for some small positive speed $\rho>0$ (and almost touches $x=0$); it allows (seemingly) for much more room than in the condition \eqref{L2compsoliton}, which requires that the mass be essentially concentrated in a moving ball of fixed size $R_\varepsilon$. 

Furthermore, the assumptions in Theorem \ref{rigidite} are done only for positive times $t \ge 0$ (and not for all times $t \in \RR$). As it  applies of course to the case of a single soliton, Theorem \ref{rigidite} actually extends and refines Theorem \ref{liouville}.

We must underline that this improvement to focus on the behavior for positive times only is actually very meaningful. Indeed, in view of the above result, a solution which would be non dispersive at times $+\infty$ and $-\infty$ would be a multi-soliton at both ends: but such a behavior is not to be expected, except in the integrable cases of $p=2$ (KdV) and $p=3$ (mKdV) and the Gardner nonlinearity $u^2- \lambda u^3$. To support this, let us refer to the work by Martel and Merle \cite{collision, inelastic,nonexistence} (see also Muñoz \cite{munoz}) on the description of 2-solitons: starting with a 2-soliton solution at $-\infty$ (for the quartic $p=4$ (gKdV)), the collision is almost but not elastic, and there is a non zero defect (which one can quantify), so that it is not a 2-soliton at $+\infty$ (and so, by Theorem \ref{rigidite}, it must be dispersive in the sense of \eqref{L2compactnessprop}).

In principle, the computations in the articles above could extend to $N$-solitons for $N \ge 3$, but it has not been performed yet, and one could still wonder if there is always a defect. If one is willing to assume non dispersion for all time $t\in\RR$, our conclusion is that the solution under consideration is a multi-soliton in $+\infty$ for which all derivatives decay exponentially in space for each fixed values of $t$. More precisely, we have

\begin{Cor}\label{rigidite'}
Let $u\in\mathscr{C}(\RR,H^1(\RR))$ satisfy the assumptions \eqref{Nsolproche}, \eqref{inegxi} of Theorem \ref{rigidite}, and assume (to replace \eqref{L2compactnessprop}) the existence of two constants $0<\sigma<\rho$ such that
\begin{equation}\label{L2compactnessprop'}
\forall\;\varepsilon>0,\:\exists\; R_\varepsilon>0,\:\forall\;t\in\RR,\qquad\int_{\mathscr{B}(\rho t,\sigma|t|+R_\varepsilon)^\complement}u^2(t,x)\;dx\leq\varepsilon.
\end{equation}
Then the conclusion of Theorem \ref{rigidite} holds, and also the following exponential decay property at fixed time, for all $s\in\NN$, and for some possibly larger constant $C_s$:
\begin{equation}\label{expdecay}
\forall\;t\geq 0,\:\forall\;x\in\RR,\qquad \big|\partial^s_xu(t,x)\big|\leq C_s\sum_{i=1}^Ne^{-\theta|x-c_i^+t|}. 
\end{equation} 
\end{Cor}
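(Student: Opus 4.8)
The plan is to treat the two conclusions separately. For the first one, I would simply feed \eqref{L2compactnessprop'} into Theorem~\ref{rigidite}. For $t\ge 0$ one has $|t|=t$, so the complement of $\mathscr{B}(\rho t,\sigma t+R_\varepsilon)$ contains the half-line $\{x<(\rho-\sigma)t-R_\varepsilon\}$; hence \eqref{L2compactnessprop'}, restricted to $t\ge 0$, implies \eqref{L2compactnessprop} with $\rho$ replaced by $\rho':=\rho-\sigma>0$. Since \eqref{Nsolproche} and \eqref{inegxi} are assumed, Theorem~\ref{rigidite} applied with the speed $\rho'$ gives that $u$ is a multi-soliton in $+\infty$: it produces $0<c_1^+<\cdots<c_N^+$, parameters $x_i^+$, a rate $\theta>0$ and constants $C_s$ for which \eqref{CVexp} holds, with $u\in\mathscr{C}([T_0,+\infty),H^s(\RR))$ for every $s\ge 0$. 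This already settles the statement that the conclusion of Theorem~\ref{rigidite} holds.

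\textbf{Reducing the pointwise bound to tail $L^2$ estimates.} For \eqref{expdecay}, note first that each profile $Q_{c_i^+}$ is explicit and that it, together with all its derivatives, decays like $e^{-\theta_0|\cdot|}$ for some $\theta_0>0$; thus $\sum_i R_{c_i^+,x_i^+}(t)$ already satisfies a bound of the form of the right-hand side of \eqref{expdecay}, the fixed shifts $x_i^+$ being absorbed into $C_s$ after shrinking $\theta$. It therefore suffices to control the remainder $w(t):=u(t)-\sum_i R_{c_i^+,x_i^+}(t)$ pointwise with the same spatial profile. I would reduce this to $L^2$ tail estimates through the elementary half-line inequality $\|f\|_{L^\infty(x>a)}^2\le 2\|f\|_{L^2(x>a)}\|f_x\|_{L^2(x>a)}$ (and its mirror for $x<a$): it is then enough to bound, for each $s$ and uniformly in $t\ge 0$, the tail and strip masses of $\partial_x^s u$ by a constant times $e^{-2\theta\,\mathrm{dist}(a,\{c_i^+t\})}$, where $a$ ranges over points far to the right of $c_N^+t$, far to the left of $c_1^+t$, or in one of the $N-1$ inter-soliton gaps.

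\textbf{The weighted monotonicity core.} The heart of the proof is to produce these uniform-in-time $L^2$ tail bounds via exponentially weighted monotonicity (virial) identities for the gKdV flow: for a threshold speed $\gamma$ and a weight $\psi$ behaving like $e^{\theta(x-\gamma t)}$ on the relevant side, one differentiates $\int u^2(t,x)\psi(x-\gamma t)\,dx$ in time and uses that the flux produced by $\partial_x(u_{xx}+u^p)$ has a favorable sign on the region where $u$ is small, the $\psi'''$ and nonlinear errors being absorbed since $\psi$ varies exponentially with a small rate $\theta$. The anchoring differs by region: for the \emph{left} tail (front $\gamma<c_1^+$) and for the inter-soliton \emph{strips} (consecutive thresholds $c_i^+<\gamma<c_{i+1}^+$, where one subtracts two localized masses), the localized mass is almost monotone in forward time and is anchored at $t\to+\infty$ by \eqref{CVexp}, which supplies the exponential rate; for the \emph{right} tail (front $\gamma>c_N^+$), the almost-monotone mass must instead be anchored at $t\to-\infty$, and here it is precisely the negative-time part of \eqref{L2compactnessprop'} — unavailable from the one-sided \eqref{L2compactnessprop} — that forces this mass to vanish. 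The analogous control of $\partial_x^s u$ is then obtained by bootstrapping, differentiating the equation and invoking the smoothing together with the $H^s$ bounds of \eqref{CVexp}.

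\textbf{Conclusion and main obstacle.} Inserting the weighted $L^2$ bounds into the half-line inequality, derivative by derivative, yields \eqref{expdecay} for all $t\ge 0$ at once, since the monotonicity estimates are uniform in time and no separate treatment of a compact time interval is needed. I expect the decisive difficulty to lie in the weighted monotonicity of the third step: maintaining the sign of the flux and absorbing the $\psi'''$ and nonlinear error terms uniformly over all $t\ge 0$ and simultaneously over the two unbounded tails and the $N-1$ inter-soliton strips, choosing the threshold speeds compatibly with $\rho,\sigma$ and the $c_i^+$, and then propagating the exponential localization to every derivative. By contrast, the reduction of the first step and the Sobolev upgrade of the second and fourth steps are routine.
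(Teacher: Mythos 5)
Your Step 1 and the overall architecture (left tail / right tail / middle region, with the right tail anchored at $t\to-\infty$ using the negative-time part of \eqref{L2compactnessprop'}) correctly identify what is at stake, and this matches the paper's strategy in spirit. However, there is a genuine gap in the core of your argument: the right-tail monotonicity \emph{cannot be run with a constant threshold speed} $\gamma>c_N^+$ and a weight of the form $e^{\theta(x-\gamma t)}$, yet this is exactly what you propose. The reason is geometric. Since each soliton carries a fixed amount of $L^2$-mass, the non-dispersion assumption forces $\rho-\sigma\le c_i^+\le\rho+\sigma$ for every $i$; in particular $\gamma>c_N^+\ge\rho-\sigma$. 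Now look at negative times: by \eqref{L2compactnessprop'} the mass of $u(t)$ is concentrated in $[(\rho+\sigma)t-R_\varepsilon,\,(\rho-\sigma)t+R_\varepsilon]$, whose \emph{right} edge recedes only at speed $\rho-\sigma$, whereas your front $x=\gamma t+x_0$ recedes at the strictly larger speed $\gamma$. Hence as $t\to-\infty$ the front ends up inside (or to the left of) the mass-concentration cone, and the quantity you want to anchor — the (weighted) mass to the right of the front — does not tend to $0$; unweighted it can tend to the full mass $\|u_0\|_{L^2}^2$, and with the weight $e^{\theta(x-\gamma t)}$ it can even blow up, since on the mass cone $x-\gamma t\ge(\rho+\sigma-\gamma)t-R_\varepsilon\to+\infty$. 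Anchoring at $t\to+\infty$ instead is useless because the monotonicity goes the wrong way there (mass to the right of a rightward-moving front is almost non-\emph{increasing} forward in time, so the limit at $+\infty$ only bounds it from below). So no single constant speed $\gamma$ is compatible with both requirements ($\gamma>c_N^+$ for the final estimate, $\gamma<\rho-\sigma$ for the anchor), and your "choice of threshold speeds compatible with $\rho,\sigma$ and the $c_i^+$" is not a technical nuisance but an impossibility as stated.

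The paper resolves precisely this point by using a front with \emph{time-dependent} speed following the boundary of the non-dispersion region, namely $M(t)=\rho t+\sigma|t|$ (speed $\rho-\sigma$ for $t<0$, so the anchor at $-\infty$ vanishes, and speed $\rho+\sigma\ge c_N^+$ for $t>0$, so the front stays ahead of the solitons). This is the whole purpose of Proposition \ref{pointwise2}, stated with two functions $a,b$ and $m=\min\{a,b\}$, $M=\max\{a,b\}$, combined with the time-reversal symmetry $\hat u(t,x)=u(-t,-x)$ of Remark \ref{est_left}: applied with $a(t)=(\rho-\sigma)t$, $b(t)=(\rho+\sigma)t$, it yields $|\partial_x^su(t,x)|\le K_se^{-\gamma(x-(\rho+\sigma)t)}$ for $x\ge(\rho+\sigma)t$ and all $t$, which is the estimate your step is trying to produce. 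Two further remarks. First, your treatment of the middle region (inter-soliton strips via subtracted localized masses) is unnecessary: the paper simply observes that in $\{|x|\le 2(\rho+\sigma)t\}$ one has $|x-c_i^+t|\le Ct$, so the exponential-in-time convergence \eqref{strong_CV} (in $H^s$, hence in $L^\infty$ by Sobolev embedding) converts directly into exponential decay in $|x-c_i^+t|$, with no monotonicity argument at all; your strip estimates, even if carried out, would only reprove this. Second, your reduction via the half-line inequality $\|f\|^2_{L^\infty(x>a)}\le 2\|f\|_{L^2(x>a)}\|f_x\|_{L^2(x>a)}$ is fine, but the derivative bounds in the tails still require the weighted induction on $k$ of Proposition \ref{pointwise2} (following Laurent--Martel), which your "bootstrapping" sentence glosses over.
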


As it was first observed in \cite{asympstabcrit}, non dispersion (for all times) actually self improves to smoothness and exponential decay in space (outside the center of mass). Of course, this is very relevant for solitons, which exhibit precisely this spatial behavior. But it has yet to be proven that multi-solitons do have spatial exponential decay  \eqref{expdecay} as well; even though it is a very natural conjecture, and that it is known that multi-solitons are smooth. To be able to conclude to \eqref{expdecay}, one has currently to make the assumption \eqref{L2compactnessprop'} (in fact, it would be sufficient to assume that $u$ and $u(-t)$ satisfy \eqref{L2compactnessprop} and to assume in addition that the analog of \eqref{L2compactnessprop} with  $x>(\rho+\sigma)t+R_\varepsilon$ holds for positive times), and for the time being, the above Corollary \ref{rigidite'} is meaningful. 

\begin{Rq}
In the $L^2$-subcritical case $1<p<5$, assumptions \eqref{Nsolproche} and \eqref{inegxi} can be relaxed to hold only a time $t=0$. If they hold for large enough times, positive and negative (or outside of the collision period), the conclusion can be strengthened to $u$ being a multi-soliton at $+\infty$ and $-\infty$, and satisfying \eqref{expdecay} for all $t \in \RR$.
\end{Rq}

In the context of the particular (KdV) equation (corresponding to $p=2$), we claim next a result which gives rise to a simplified characterization of multi-solitons among all $H^1$-solutions.

\begin{theorem}\label{kdv}
Let $p=2$ and $u_0\in \mathscr{S}(\RR)\setminus\{0\}$ be such that the corresponding solution $u$ of (KdV), which is defined globally in time, is non dispersive for positive times, that is, satisfies \eqref{L2compactnessprop}. 
Then $u$ is a multi-soliton (in $+\infty$ and $-\infty$).
\end{theorem}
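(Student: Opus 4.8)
The plan is to combine the complete integrability of (KdV) with the general Liouville-type result of Theorem~\ref{rigidite}. Since $p=2$ is integrable and the initial datum lies in $\mathscr{S}(\RR)\setminus\{0\}$, the inverse scattering transform gives a precise long-time description of $u$: associated to $u_0$ there is a scattering data consisting of a finite set of discrete eigenvalues (the bound states, which generate the solitons) together with a reflection coefficient governing the radiation part. The known soliton resolution in the integrable setting then states that as $t\to+\infty$ (and as $t\to-\infty$) the solution decomposes into a finite train of ordered solitons traveling to the right plus a dispersive tail that decays and spreads out, the latter being supported (to leading order) in the region $x<0$ for $t$ large. The key point I would extract from this is the \emph{dichotomy}: either $u$ has a nontrivial reflection coefficient, in which case genuine radiation is emitted and escapes into $x\to-\infty$, or the reflection coefficient vanishes identically and $u$ is a pure (reflectionless) $N$-soliton.

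First I would argue that the non-dispersion hypothesis \eqref{L2compactnessprop} forces the reflection coefficient to vanish. Indeed, if radiation were present, the dispersive part carries a fixed positive amount of $L^2$-mass which, by the dispersive decay and stationary phase for the Airy-type evolution, propagates with group velocities that send it into the region $x\lesssim 0$, and in particular into $x<\rho t - R_\varepsilon$ for any fixed $\rho>0$ and large $t$. This would contradict \eqref{L2compactnessprop}, which caps the mass to the left of $\rho t-R_\varepsilon$ uniformly in $t$ by an arbitrarily small $\varepsilon$. Hence the scattering data must be reflectionless, which by the classical theory means precisely that $u$ is an $N$-soliton solution; since $u_0\neq 0$, we have $N\ge 1$, and the soliton speeds are determined by the discrete eigenvalues. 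This yields that $u$ is a multi-soliton in $+\infty$, and because reflectionless potentials stay reflectionless under the flow and the $N$-soliton formula is symmetric under $t\mapsto -t$ (the same bound states resolve the solution at $-\infty$, merely with the asymptotic phases/orderings adjusted), it is simultaneously a multi-soliton in $-\infty$.

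The main obstacle is making the radiation-escapes-to-the-left step quantitative and rigorous rather than merely heuristic. One must ensure that the dispersive component genuinely violates \eqref{L2compactnessprop}: this requires a lower bound on the mass of the radiation crossing into the half-line $x<\rho t$, uniform in large $t$, which is a stationary-phase/asymptotic-expansion statement about the Gelfand--Levitan--Marchenko reconstruction when the reflection coefficient is nonzero. Alternatively, and more cleanly, one can sidestep the sharpest inverse-scattering asymptotics by invoking Theorem~\ref{rigidite} directly: the integrable structure (or the existing soliton-resolution results for (KdV) with Schwartz data) guarantees that $u$ approaches, in the appropriate moving frame, a finite ordered sum of solitons $\sum_i Q_{c_i}(\cdot-x_i(t))$ for large $t$, so that \eqref{Nsolproche} and \eqref{inegxi} hold for $t\ge T_0$; combined with the non-dispersion assumption \eqref{L2compactnessprop}, Theorem~\ref{rigidite} immediately upgrades this to $u$ being an exact multi-soliton in $+\infty$, with the exponential convergence \eqref{CVexp}. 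The delicate part in this second route is verifying that Schwartz data plus \eqref{L2compactnessprop} indeed produce the required closeness and separation in a moving frame at some finite positive time; once that is secured, the reflectionless conclusion and the symmetry of the integrable $N$-soliton under time reversal give the two-sided statement.
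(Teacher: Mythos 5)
Both routes you propose have a gap at the same place, and it is precisely where the paper does its real work. Your first route (the reflection-coefficient dichotomy) requires a quantitative lower bound on the amount of radiation mass that enters the region $x<\rho t-R_\varepsilon$, which you yourself flag as heuristic; no such lower bound is supplied by the standard inverse-scattering asymptotics, and the paper never proves one --- it avoids the dichotomy altogether. Your second route founders on a more concrete point: the soliton resolution you want to feed into Theorem~\ref{rigidite} (Eckhaus--Schuur, stated as Theorem~\ref{dec_schuur} in the paper) only provides $L^\infty$ and $L^2$ estimates in the rightward region $x>\beta t$, with a $t^{-1/3}$ rate, and gives no control whatsoever on $\partial_x u$. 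It therefore cannot by itself produce the global-in-space $H^1$ closeness \eqref{Nsolproche} that Theorem~\ref{rigidite} demands. This is not a technicality one can wave through: bridging from $L^2$-type resolution estimates to $H^1$ is the crux of the proof.

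The paper's actual argument fills exactly this hole, and in doing so dispenses with Theorem~\ref{rigidite} entirely. It proceeds as follows: (i) apply Eckhaus--Schuur to obtain $u_d$, a multi-soliton or zero, with $\|u(t)-u_d(t)\|_{L^2(x>\beta t)}=\mathrm{O}(t^{-1/3})$; (ii) use the non-dispersion hypothesis \eqref{L2compactnessprop} through Proposition~\ref{pointwise} to get exponential smallness of $u$ in the region $x\le\beta t$, and mass conservation together with $u_0\neq 0$ to rule out $u_d=0$; (iii) combine these to get $\big\|u(t)-\sum_{i}R_{c_i,x_i^+}(t)\big\|_{L^2}=\mathrm{O}(t^{-1/3})$ globally in space (Claim~\ref{claim_partiel}); (iv) --- the step missing from your sketch --- invoke the $H^2$-level conservation law of (KdV) (Claim~\ref{H2_bounded}) to get $u\in L^\infty([0,+\infty),H^2(\RR))$, and then interpolate, via $\|v_x\|_{L^2}^2\le\|v\|_{L^2}\|v\|_{H^2}$, to convert the $L^2$ decay into $H^1$ convergence. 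At that point $u$ satisfies Definition~\ref{def:multi_sol} directly, so invoking Theorem~\ref{rigidite} would in any case be superfluous: by the time you can verify its hypothesis \eqref{Nsolproche} you already possess the conclusion. The passage to $-\infty$ is then, as you suggest, by the integrable theory (uniqueness of multi-solitons and the explicit reflectionless $N$-soliton solutions), and that part of your sketch is sound.
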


The proof of this theorem relies on the soliton resolution result for (KdV), set up by Eckhaus and Schuur \cite{eckhaus} and refined in Schuur \cite{schuur}. 

\begin{Rq}\label{rq_hyp_u0}
Requiring that the initial condition $u_0$ belongs to the Schwartz space is not necessary in order to reach the conclusion in Theorem \ref{kdv}. Considering the non dispersion assumption made in Theorem \ref{kdv}, it would be sufficient for example that all derivatives up to order 4 of $u_0$ decay faster than $x^{-11}$ when $x\to +\infty$. Actually, we only need to assume that $u_0$ is smooth enough and decays sufficiently rapidly for $|x|\to +\infty$ for the whole of the inverse scattering method to work, thus for the soliton resolution result for (KdV) to hold \cite{cohen,schuur}. 
However, our goal is not to obtain the most general statement, and for clarity purposes, we will not attempt to optimize the regularity and decay assumptions on $u_0$.
\end{Rq}

Similarly, we can characterize non dispersive solutions of the (mKdV) equation. Recall that, in addition to solitons, (mKdV) admits other particular solutions, known as breathers, which are also important with respect to the soliton resolution conjecture. Breathers do not correspond to a superposition of solitons but are instead periodic in time and can move both in the left and right directions; for all $(\alpha,\beta)\in\RR^*_+\times\RR^*_+$, and for all $x_1,x_2\in\RR$, the breather $B_{\alpha,\beta,x_1,x_2}$ with envelope velocity $\gamma:=\beta^2-3\alpha^2$, phase velocity $\delta:=3\beta^2-\alpha^2$, and translation parameters $x_1,x_2\in\RR$ takes the following expression:
\begin{equation}
B_{\alpha,\beta,x_1,x_2}(t,x):=2\sqrt{2}\partial_x\left[\arctan\left(\frac{\beta}{\alpha}\frac{\sin(\alpha(x-\delta t-x_1))}{\cosh(\beta(x-\gamma t-x_2))}\right)\right].
\end{equation}
We refer to Alejo and Muñoz \cite{alejo} for the introduction and the study of stability in $H^2$ of these solutions. Note that the decomposition result in terms of solitons and breathers available for (mKdV) solutions and stated in \cite[Chapter 5, Theorem 5.1]{schuur} and more recently in \cite[Theorem 1.10]{chen} holds under the assumption that the initial data $u_0$ is \emph{generic} in the following sense: the set of all $\xi\in\CC$ such that the classical Jost solutions $\psi_l(\xi)$ and $\psi_r(\xi)$ to the Zakharov-Shabat system
$$\dbinom{\psi_1}{\psi_2}'=\left(\begin{array}{cc}
-i\xi&u_0\\
-u_0&i\xi
\end{array}\right)\dbinom{\psi_1}{\psi_2}$$ are $\RR$-linearly dependent is finite and consists in the \emph{scattering data}
\begin{equation}\label{scattering_set}
\{i\sqrt{c_1},\dots,i\sqrt{c_{N_1}},\alpha_1+i\beta_1,\dots,\alpha_{N_2}+i\beta_{N_2}\},
\end{equation} with $N_1,N_2\in\NN$, $c_1,\dots,c_{N_1},\alpha_1,\dots,\alpha_{N_2},\beta_1,\dots,\beta_{N_2}\in\RR^*_{+}$ such that
\begin{equation}\label{cond_1}
c_1<\dots<c_N,\qquad\beta_1^2-3\alpha_1^2<\dots<\beta_{N_2}^2-3\alpha_{N_2}^2,
\end{equation}
and \begin{equation}\label{cond_2}
\forall\;(i,j)\in\{1,\dots,N_1\}\times\{1,\dots,N_2\},\quad c_i\neq \beta_j^2-3\alpha_j^2.
\end{equation}
We refer to Schuur \cite[Chapter 4]{schuur}, Chen and Liu \cite[Paragraph 1.2]{chen}, and the references therein for more details concerning genericity. 

Our result on non dispersive solutions of (mKdV) writes as follows.

\begin{theorem}\label{mkdv}
Let $p=3$ and $u_0\in \mathscr{S}(\RR)\setminus\{0\}$ be generic (in the above sense) with scattering data \eqref{scattering_set}, and such that the corresponding global solution $u$ of (mKdV) is non dispersive for positive times (that is, satisfies \eqref{L2compactnessprop}). 

Then $u$ is a multi-breather with positive speeds in $+\infty$: we have $N_1+N_2\ge 1$ and for all $j=1,\dots,N_2$, $$\beta_j^2-3\alpha_j^2>0,$$ and there exist $\gamma>0$, positive constants $C_s$, signs $\epsilon_i=\pm 1$, and real parameters $x_{0,i}, x_{1,j}, x_{2,j}$ such that for all $s\ge 0$, $u$ belongs to $\mathscr{C}([0,+\infty),H^s(\RR))$ and
\[ \forall\;t\ge 0,\qquad \left\|u(t)-\sum_{i=1}^{N_1}\epsilon_iR_{2c_i,x_{0,i}}(t)-\sum_{j=1}^{N_2}B_{\sqrt{2}\alpha_j,\sqrt{2}\beta_j,x_{1,j},x_{2,j}}(t)\right\|_{H^s}\leq C_se^{-\gamma t}.\]
\end{theorem}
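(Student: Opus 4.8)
The plan is to run the inverse scattering transform for (mKdV) on the generic Schwartz datum $u_0$, read off the decomposition of $u$ as $t\to+\infty$, and then use the non-dispersion hypothesis \eqref{L2compactnessprop} to eliminate every piece that is not a right-moving soliton or breather. Concretely, I would invoke the soliton resolution result of Schuur \cite[Chapter 5]{schuur} (refined by Chen and Liu \cite{chen}): since $u_0$ is generic with scattering data \eqref{scattering_set}, the solution splits, as $t\to+\infty$, into one soliton $\pm R_{2c,\cdot}$ per purely imaginary eigenvalue $i\sqrt{c}$ (moving right at speed $2c>0$), one breather $B_{\sqrt2\alpha,\sqrt2\beta,\cdot,\cdot}$ per complex datum $\alpha+i\beta$ (with envelope speed $2(\beta^2-3\alpha^2)$), and a dispersive remainder $r(t)$ carrying a fixed $L^2$-mass $m_r\ge 0$ determined by the reflection coefficient. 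The key structural input is that, since the linearized flow has non-positive group velocity $-3k^2\le 0$, the radiation $r(t)$ is asymptotically $L^2$-concentrated in the half-line $\{x\le 0\}$.

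The heart of the proof is then a mass-localization argument against \eqref{L2compactnessprop}. Since $\rho t-R_\varepsilon\to+\infty$, for $t$ large the whole of $r(t)$ sits inside $\{x<\rho t-R_\varepsilon\}$, while the solitons and breathers (being to the right) contribute only cross terms that vanish as $t\to+\infty$; hence $\int_{x<\rho t-R_\varepsilon}u^2\,dx\ge m_r-o(1)$, and \eqref{L2compactnessprop} forces $m_r\le\varepsilon$ for every $\varepsilon>0$, i.e. $m_r=0$. Thus $u_0$ is reflectionless. The same mechanism forbids non-right-moving breathers: a breather with $\gamma_j:=\beta_j^2-3\alpha_j^2\le 0$ is a coherent profile of fixed positive $L^2$-mass centered near $x\approx 2\gamma_j t\le 0$, so it too eventually enters $\{x<\rho t-R_\varepsilon\}$ and contradicts \eqref{L2compactnessprop}; therefore $\beta_j^2-3\alpha_j^2>0$ for all $j$. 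Finally $N_1+N_2\ge 1$, because a reflectionless potential with empty discrete spectrum vanishes identically, whereas $u_0\ne 0$.

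It remains to upgrade ``$u$ reflectionless with positive, pairwise distinct speeds'' to the quantitative conclusion. Reflectionlessness presents $u(t,\cdot)$ as an explicit determinant (Gelfand--Levitan--Marchenko) combination of the $N_1+N_2$ discrete modes, Schwartz at each fixed time, whence $u\in\mathscr{C}([0,+\infty),H^s)$ for all $s\ge 0$. The speeds $\{2c_i\}$ and $\{2(\beta_j^2-3\alpha_j^2)\}$ are pairwise distinct by \eqref{cond_1}--\eqref{cond_2}, so the constituent bumps separate linearly in $t$; expanding the explicit formula, each interaction term picks up a factor $e^{-\gamma t}$ from the growing spatial gaps, which yields $\|u(t)-\sum_{i}\epsilon_iR_{2c_i,x_{0,i}}-\sum_j B_{\sqrt2\alpha_j,\sqrt2\beta_j,x_{1,j},x_{2,j}}\|_{H^s}\le C_se^{-\gamma t}$ for $t\ge T_0$, the signs $\epsilon_i$ and the translations being read off the norming constants. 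The compact interval $[0,T_0]$ is then absorbed into $C_s$ by continuity of $t\mapsto u(t)$ in $H^s$.

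The main obstacle is the $L^2$-localization step of the second paragraph: \eqref{L2compactnessprop} is an $L^2$-condition, whereas the sharp long-time asymptotics for (mKdV) are most naturally stated as pointwise, region-by-region estimates (oscillatory, Painlevé, and collisionless-shock zones). The delicate point is therefore to prove rigorously that all of the non-coherent mass is swept strictly to the left of $\rho t$ in $L^2$, rather than merely in $L^\infty$. A secondary but unavoidable chore is to reconcile the scattering normalizations of \cite{schuur,chen} with the PDE normalization used here --- the $\sqrt2$ factors in the breather amplitudes and the factor $2$ relating eigenvalues to speeds --- so that the eigenvalues $i\sqrt{c}$ and $\alpha+i\beta$ indeed produce the profiles $R_{2c,\cdot}$ and $B_{\sqrt2\alpha,\sqrt2\beta,\cdot,\cdot}$ appearing in the statement.
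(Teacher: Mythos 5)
Your plan hinges on a step that the cited literature does not provide and that you yourself flag as ``the main obstacle'' without resolving it: the existence of a dispersive remainder $r(t)$ with a well-defined $L^2$-mass $m_r$ that is asymptotically concentrated in $\{x\le 0\}$ \emph{in $L^2$}. The resolution theorems of Schuur and Chen--Liu (Theorem \ref{dec_schuur_mkdv} in the paper) control $u-P$ in $L^\infty\cap L^2$ only on right cones $\{x>v_+t\}$, and merely in $L^\infty$ on left cones $\{x<v_-t\}$; nothing is asserted, in $L^2$, about the middle region $v_-t<x<v_+t$, where a pointwise bound of size $t^{-1/2}$ over a set of length of order $t$ is perfectly consistent with $O(1)$ mass. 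The group-velocity heuristic ($-3k^2\le 0$) is not a substitute for this, so your central inequality $\int_{x<\rho t-R_\varepsilon}u^2\,dx\ge m_r-o(1)$, and with it the conclusion that $u_0$ is reflectionless, is unproven. (One could imagine repairing it via a trace formula for the Zakharov--Shabat system identifying $m_r$ with $\|u_0\|_{L^2}^2$ minus the sum of the soliton and breather masses, but that is additional inverse-scattering machinery you neither invoke nor prove, and your whole GLM-determinant endgame is contingent on it.)

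The paper's proof shows this issue can be bypassed entirely, and this is exactly where its key tool enters --- a tool your proposal never uses. The hypothesis \eqref{L2compactnessprop} is exploited not by mass counting but through Proposition \ref{pointwise} (the monotonicity-based smoothing and decay result of Section \ref{sectionexpdec}), which yields $|u(t,x)|\le Ce^{-\gamma|x-\rho t|}$ for $x\le\rho t$, hence $\|u(t)\|_{L^2(x\le\beta t)}\le Ce^{-\gamma\beta t}$. Splitting $\RR$ at $x=\beta t$ and combining with Schuur's right-cone $L^2$ estimate, exactly as in Claim \ref{claim_partiel}, gives $\|u(t)-P(t)\|_{L^2}=\mathrm{O}(t^{-\frac{1}{3}})$ globally, with no need to know where the radiation goes or that $u_0$ is reflectionless. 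The exponential $H^s$ convergence is then obtained by PDE means: conservation laws give $u\in L^\infty([0,+\infty),H^4(\RR))$, interpolation upgrades the convergence to $H^2$, and Semenov's uniqueness and smoothness theorem for multi-breathers \cite{semenov} delivers the exponential rate in every $H^s$ --- replacing your contingent explicit-formula expansion. Your elimination of left-moving breathers and of the case $P\equiv 0$ (via \eqref{estimate_left}, the sech-shaped profiles, and mass conservation) is essentially the paper's argument and is sound; the reflectionlessness step is the genuine gap.
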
 

The proof is done by adapting that of Theorem \ref{kdv} by writing the soliton/breather resolution for $p=3$, and then using smoothness and uniqueness of multi-breathers and the estimates in higher order Sobolev spaces proved by Semenov \cite{semenov}.

\begin{Rq}
Let us notice that Remark \ref{rq_hyp_u0} applies also in the context of Theorem \ref{mkdv}.
\end{Rq}

\subsection{Comments}

The proofs of Theorem \ref{rigidite} and Corollary \ref{rigidite'} are in the spirit of the original Liouville result by Martel and Merle \cite{asympstabcrit}, and also the work of Laurent and Martel \cite{laurent} on smoothness and decay of non dispersive solutions. An important ingredient is the observation that a crucial monotonicity formula holds under a much relaxed non dispersion assumption than previously made, see Proposition \ref{pointwise}, which has its own interest. Also we underline a subtle but key difference in the strategy of the proof: we crucially rely at some point on the asymptotic stability of multi-solitons in the energy space (from \cite{asympstab1}, stated in Theorem \ref{asympto}). But let us recall this result itself is a consequence of the rigidity result for one soliton stated in Theorem \ref{liouville}: in some sense, the roles are reversed here. 

In fact, our proofs use and combine several previous results on the (gKdV) flow around solitons and multi-solitons. This sheds a new light on many results established so far and which have their own interest, which are here linked together to yield new statements. It seems to us that this phenomenon is an interesting point of this paper. \\

Our results lead to several open questions. We already mentioned above the first one, but repeat it here: we conjecture that, for each time where defined, multi-solitons for (gKdV) have pointwise exponential decay (along with their derivatives); this is only known in the integrable case, where explicit formulas are known. As second question is whether similar rigidity results (as well as asymptotic stability properties) hold for other dispersive models. The Liouville theorem for solitons holds for the Zakharov-Kuznetsov equation in 2D for example, it would be nice to know if an analog for multi-solitons holds as well. A very natural context is that of the non-linear Schrödinger equations, for which the understanding of non-dispersive solutions remains mostly open.  \\

This article is organized as follows. After the introduction, we present in section \ref{sectionexpdec} a general property of exponential decay satisfied by non-dispersive solutions which is an important new observation and interesting in itself. The third section is then devoted to the proof of Theorem \ref{rigidite} and Corollary \ref{rigidite'}. In the forth section, we consider the integrable case and sketch the proofs of Theorem \ref{kdv} and \ref{mkdv}.

\subsection{Acknowledgments}

The author would like to thank his supervisor Raphaël Côte for suggesting the idea of this work and for fruitful discussions.

\section{Smoothness and exponential decay for non dispersive solutions}\label{sectionexpdec}

The goal of this section is to show the following propositions which extend Laurent and Martel \cite[Theorem 1]{laurent}.  

\begin{proposition}\label{pointwise} 
Let $J$ be a neighborhood of $+\infty$ and $u\in\mathscr{C}(J,H^1(\RR))$  be a solution of (gKdV) which belongs also to $L^\infty(J,H^1(\RR))$. Suppose that there exists $\rho>0$ such that 
\begin{equation}\label{L2comp}
\forall\;\varepsilon>0,\:\exists\; R_\varepsilon>0,\:\forall\; t\in J,\qquad\int_{x<\rho t-R_\varepsilon}u^2(t,x)\;dx\leq\varepsilon.
\end{equation}
Then $u$ belongs to $\mathscr{C}^\infty(J\times\RR)$ and there exists $\kappa>0$ such that for all $k\in\NN$, there exists $K_k>0$ such that \begin{equation}\label{leftpoint}
\forall\;t\in J,\: \forall\;x< \rho t,\qquad \big|\partial^k_xu(t,x)\big|\leq K_ke^{-\frac{\kappa}{2}\left|x-\rho t\right|}.
\end{equation}
\end{proposition}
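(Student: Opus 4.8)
The plan is to establish the exponential decay \eqref{leftpoint} by a two-stage bootstrap: first obtain exponential decay of the local $L^2$-mass on the left (a ``$L^2$-type'' monotonicity estimate), then upgrade this to pointwise decay of $u$ and all its spatial derivatives via parabolic-type smoothing estimates for the (gKdV) flow. This is exactly the scheme of Laurent and Martel \cite{laurent}, and the main novelty is to run it under the weaker non-dispersion hypothesis \eqref{L2comp}, in which the mass is controlled on the moving half-line $\{x<\rho t - R_\varepsilon\}$ rather than in a ball of fixed radius.

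The first step is the monotonicity argument, which is the heart of the matter. I would introduce a smooth cutoff weight $\psi$ with $\psi'\ge 0$, $\psi(y)\to 0$ as $y\to-\infty$ and $\psi(y)\to 1$ as $y\to+\infty$, built from a function like $\psi(y)=\tfrac{2}{\pi}\arctan(e^{\kappa y})$ (or any profile with exponentially small tail on the left and exponentially decaying derivative), and consider the translated/tilted weight $\psi(x-\rho t + x_0)$ for a free parameter $x_0$. Defining the localized mass
\begin{equation*}
I_{x_0}(t)=\int_\RR u^2(t,x)\,\psi\big(x-\rho t + x_0\big)\,dx,
\end{equation*}
I would differentiate in $t$ and use the equation. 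The transport term $-\rho\int u^2\psi'$ supplies a favorable (negative) contribution precisely because $\rho>0$; the dispersive and nonlinear terms, after integration by parts, produce terms of the form $-\int (3u_x^2 + \cdots)\psi' + \int(\text{lower order})\psi'''$ plus a nonlinear term controlled by $\|u\|_{L^\infty}^{p-1}$, which is small where the mass is small. The key computation is to show that for $\kappa$ small enough (depending on $\rho$ and $\|u\|_{L^\infty(J,H^1)}$) the good term dominates, yielding a differential inequality forcing $I_{x_0}(t)$ to decay. Combined with the hypothesis \eqref{L2comp}, which guarantees that the mass to the left of $\rho t$ can be made arbitrarily small uniformly in $t$, this gives the localized-mass bound
\begin{equation*}
\int_{x<\rho t} u^2(t,x)\,e^{\kappa(x-\rho t)}\,dx \le C.
\end{equation*}

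The second step converts the weighted $L^2$-bound into pointwise decay of $u$ and of each $\partial_x^k u$. Here I would use the smoothing/regularizing estimates for (gKdV) in exponentially weighted spaces (again following \cite{laurent}): one shows by an induction on $k$, using weighted energy estimates for the equations satisfied by $\partial_x^k u$, that each derivative inherits the same exponential weight on the left, i.e. $\int_{x<\rho t}(\partial_x^k u)^2 e^{\kappa(x-\rho t)}\,dx$ stays bounded. The gain of one derivative at each step comes from the Kato smoothing effect encoded in the $\psi'$-term of the monotonicity identity. Finally, Sobolev embedding in the weighted space (controlling $\sup_{x<\rho t}|\partial_x^k u| e^{\frac{\kappa}{2}(x-\rho t)}$ by the weighted $H^1$-norm of $\partial_x^k u$) yields \eqref{leftpoint}, and smoothness $u\in\mathscr{C}^\infty(J\times\RR)$ follows from the boundedness of all spatial derivatives in $L^2_{\mathrm{loc}}$ together with the equation to trade time-derivatives for space-derivatives.

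I expect the main obstacle to be the monotonicity step under the relaxed hypothesis \eqref{L2comp}. With the weight centered on the moving half-line rather than a ball, one must be careful that the bad nonlinear contribution $\int |u|^{p-1} u^2 \psi'$ is genuinely absorbed: this requires exploiting that $\psi'$ is concentrated near $x\approx \rho t$ where, by \eqref{hypstab}-type closeness (here only the uniform $H^1$-bound and \eqref{L2comp}), the $L^\infty$-norm of $u$ is small, or else that the coefficient in front of the good quadratic term beats the nonlinear term after choosing $\kappa$ small. Keeping track of the dependence of $\kappa$ and $K_k$ only on $\rho$, $p$, and $\|u\|_{L^\infty(J,H^1)}$ — and not on $t$ or $x_0$ — so that the bound is uniform and the center $x_0$ can be sent appropriately, is the delicate bookkeeping that makes the argument work.
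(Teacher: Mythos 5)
Your overall architecture (a weighted-mass monotonicity estimate, then an induction on derivatives à la Laurent--Martel, then weighted Sobolev embedding and use of the equation to get smoothness) is indeed the paper's scheme, and your treatment of the nonlinear term near the weight's centre (smallness of $\|u\|_{L^\infty}$ there, via \eqref{L2comp} and Gagliardo--Nirenberg) is also what the paper does. But the heart of the argument has a genuine gap: your weight $\psi(x-\rho t+x_0)$ rides at a \emph{fixed} offset $x_0$ from the line $x=\rho t$, and with such a weight the scheme cannot produce exponential decay. Two things go wrong. First, the error in $\frac{d}{dt}I_{x_0}$ coming from the exponential tail of $\psi'$ in the region to the right of $\rho t$, where $u$ is order one (this term cannot be absorbed into $-\rho\int u^2\psi'$, since $\|u\|_{L^\infty}$ is not small there), is of size $Ce^{-\kappa x_0}$ \emph{uniformly in $t$}; integrated over an infinite time horizon it diverges, so ``almost monotone'' yields nothing. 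Second, and more fundamentally, the quantity you would need to vanish as $t\to+\infty$ --- the localized mass to the left of $\rho t-x_0$ --- does \emph{not} tend to $0$ under \eqref{L2comp}: the hypothesis only gives, for every $t$, a bound of the form $\varepsilon + Ce^{-\kappa(x_0-R_\varepsilon)}$, and since the dependence $\varepsilon\mapsto R_\varepsilon$ is completely uncontrolled, no choice of $\varepsilon$ turns this into a bound $\lesssim e^{-\kappa' x_0}$. With a static offset, your monotonicity inequality therefore collapses to a restatement of the hypothesis; the exponential decay, which is the entire content of the proposition, never appears. Your phrase ``forcing $I_{x_0}(t)$ to decay'' is also incorrect as stated: the right-hand mass is almost decreasing but certainly does not decay (it carries the solitons).

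The missing idea --- exactly what the paper's proof of Proposition \ref{pointwise2} implements --- is to let the weight \emph{drift leftwards relative to the line $x=\rho t$} while still moving rightwards in the fixed frame: one evaluates the weight at $x-x_0+f(t)-f(t_0)-\rho t$, where $f$ is increasing affine with $0<f'<\rho$, so that the weight's centre moves at speed $\rho-f'>0$ (keeping the transport term favorable) but falls behind $\rho t$ linearly in time. This drift does two jobs at once: (a) the bad tail and nonlinear errors become $Ce^{-\kappa(-x_0+f(t)-f(t_0))}$, exponentially decaying in $t$, so their time integral is finite and exponentially small in the offset --- this integrated error is precisely where the exponential decay comes from; and (b) as $t\to+\infty$ the weight slides off to the left of $\rho t$, so the non-dispersion hypothesis genuinely forces the functional $I_{(t_0,x_0)}(t)$ to tend to $0$ (the paper's Lemma \ref{limite}). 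The almost-monotonicity (Lemma \ref{monot}, for the \emph{left}-mass functional, which is almost \emph{increasing}) then transports this zero limit backwards to time $t_0$ up to the accumulated error, giving $I_{(t_0,x_0)}(t_0)\le C e^{\kappa x_0}$ and hence the weighted $L^2$ bound. Once this step is corrected, your second stage (induction on $k$ and the passage to pointwise bounds and smoothness) goes through essentially as you describe.
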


We state next another generalized version, which is useful in the proof of Corollary \ref{rigidite'}.

\begin{proposition}\label{pointwise2} 
Let $J$ be a neighborhood of $+\infty$ and $u\in\mathscr{C}(J,H^1(\RR))$  be a solution of (gKdV) which belongs also to $L^\infty(J,H^1(\RR))$. Suppose that there exist $\beta,\delta>0$ and two $\mathscr{C}^1$ functions $a,b:J\rightarrow\RR$ such that 
\begin{equation}\label{inegder}
\forall\;t\in J,\qquad \delta\leq a'(t)\leq b'(t)\leq \beta,
\end{equation}
and \begin{equation}\label{L2comp'}
\forall\;\varepsilon>0,\:\exists\; R_\varepsilon>0,\:\forall\; t\in J,\qquad\int_{x<m(t)-R_\varepsilon}u^2(t,x)\;dx\leq\varepsilon,
\end{equation}
where $m(t):=\min\{a(t),b(t)\}$.\\ 
Then $u$ belongs to $\mathscr{C}^\infty(J\times\RR)$ and there exists $\kappa>0$ such that for all $k\in\NN$, there exists $K_k>0$ such that \begin{equation}\label{leftpoint2}
\forall\;t\in J,\: \forall\;x<m(t),\qquad \big|\partial^k_xu(t,x)\big|\leq K_ke^{-\frac{\kappa}{2}\left|x-m(t)\right|}.
\end{equation}
\end{proposition}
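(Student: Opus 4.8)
The plan is to reproduce the proof of Proposition \ref{pointwise} with the single modification that the uniformly translating center $\rho t$ is replaced by the moving center $m(t)$; the only features of $m$ that the argument uses are that it is absolutely continuous (as the minimum of two $\mathscr{C}^1$ functions) and that $\delta\le m'\le\beta$ almost everywhere, which is precisely \eqref{inegder}. The lower bound $\delta>0$ will play the role of $\rho>0$ in guaranteeing a favorable sign in the monotonicity identity, while the upper bound $\beta$ keeps the error terms controlled uniformly over the unbounded interval $J$. Observe first that the smoothness assertion $u\in\mathscr{C}^\infty(J\times\RR)$ is essentially free: since $m'\ge\delta$ gives $m(t)\ge\delta t+C_0$ on $J$ for some constant $C_0$, the region $\{x<\delta t-R\}$ is contained in $\{x<m(t)-R_\varepsilon\}$ once $R$ is taken large enough, so \eqref{L2comp'} implies hypothesis \eqref{L2comp} of Proposition \ref{pointwise} with $\rho=\delta$ (after enlarging the radii), and that proposition yields both smoothness on $J\times\RR$ and the decay estimate \eqref{leftpoint} to the left of $\delta t$. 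It remains to upgrade this to the decay \eqref{leftpoint2} to the left of the larger barrier $m(t)$.

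To that end, I would first establish the local $L^2$ exponential decay
\[ \int_{x<m(t)-y}u^2(t,x)\,dx\le Ce^{-\kappa y},\qquad y\ge 0,\ t\in J. \]
Following the monotonicity method, introduce weighted mass functionals $\int u^2(t,x)\,\psi(x-m(t)+y)\,dx$ with $\psi$ a fixed smooth nondecreasing cutoff of exponential profile chosen so that $|\psi'''|\le\kappa^2\psi'$ for a small $\kappa>0$. Writing $\psi=\psi(x-m(t)+y)$, the equation and integration by parts give
\[ \frac{d}{dt}\int u^2\,\psi=-3\int u_x^2\,\psi'-m'(t)\int u^2\,\psi'+\int u^2\,\psi'''+\frac{2p}{p+1}\int u^{p+1}\,\psi'. \]
The first two terms are nonpositive because $m'(t)\ge\delta>0$; the $\psi'''$ term is dominated by $m'(t)\int u^2\psi'$ thanks to the choice $\kappa^2<\delta$; and the nonlinear term is absorbed as well, since on the support of $\psi'$ (which lies in $\{x<m(t)-R_{\varepsilon_0}\}$ once $y>R_{\varepsilon_0}$) the hypothesis \eqref{L2comp'} together with the uniform $H^1$ bound forces $\|u\|_{L^\infty}$ to be small there by Gagliardo--Nirenberg on unit windows. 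The resulting almost-monotonicity, run with the exponentially weighted functionals as in \cite{asympstabcrit,laurent} and combined with \eqref{L2comp'}, yields the claimed exponential decay uniformly in $t$.

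I would then bootstrap to the local $H^k$ exponential decay $\int_{x<m(t)-y}(\partial_x^ku)^2\le C_ke^{-\kappa y}$ by induction on $k$, running the same weighted computation on the energy-type quantities attached to $\partial_x^ku$; the higher-order nonlinear contributions are controlled by the lower-order decay already obtained and by the $L^\infty$-smallness of $u$ on the support of the weight, exactly as in \cite{laurent}. Once every local Sobolev norm is known to decay exponentially in $y$, the pointwise bound \eqref{leftpoint2} follows by the one-dimensional embedding $H^1\hookrightarrow L^\infty$ applied on sliding unit windows to the left of $m(t)$, while smoothness has already been secured above.

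The main difficulty I anticipate lies in carrying the monotonicity through with the non-constant, merely Lipschitz center $m(t)$: one has to verify that the favorable signs survive (which they do, by $m'\ge\delta$) and that all error terms stay bounded uniformly over the whole unbounded interval $J$ (which uses $m'\le\beta$ and $u\in L^\infty(J,H^1)$). The at-most-one corner of $m=\min\{a,b\}$ (the two functions cross at most once, since $a'\le b'$) causes no trouble because the monotonicity identity is used in integrated form and $m'$ exists almost everywhere. The higher-order bootstrap is the most computation-heavy part, but it is a routine adaptation of the corresponding step in \cite{laurent}.
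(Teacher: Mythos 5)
Your overall strategy — exponentially weighted mass functionals, almost-monotonicity, then an $H^k$ bootstrap and Sobolev embedding on unit windows — is indeed the paper's approach, and your side remarks are fine (the reduction of smoothness to Proposition \ref{pointwise} via $m(t)\ge \delta t+C_0$, and the harmlessness of the single corner of $m=\min\{a,b\}$, which the paper handles instead by an explicit $\mathscr{C}^1$ surrogate $\tilde m$ with $m\le\tilde m\le m+1$ and $\tilde m'\ge\delta$). However, the core monotonicity step, as you set it up, does not close: your weight $\psi(x-m(t)+y)$ keeps the reference point at \emph{constant} depth $y$ behind $m(t)$, and this is fatal for two reasons. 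First, the nonlinear term is not absorbed the way you claim: a weight satisfying $|\psi'''|\le\kappa^2\psi'$ cannot have $\psi'$ compactly supported, so "the support of $\psi'$ lies in $\{x<m(t)-R_{\varepsilon_0}\}$" is false; the contribution of the region $x>m(t)-R_0$, where $u$ carries the solitons and is of order one, is bounded only by $Ce^{-\kappa y}$ \emph{per unit time}. Integrated over the unbounded interval $J$ this error grows linearly in $t-t_0$, so the almost-monotonicity degenerates at large times. Second, even ignoring error accumulation, the limit of your functional is not small enough: with constant lag, the left mass $\int u^2\,(1-\psi)\,dx$ at time $t$ is only $\varepsilon$-small by \eqref{L2comp'}, uniformly in $t$ but \emph{not} tending to $0$ as $t\to+\infty$; hence the integrated monotonicity can only reproduce the qualitative smallness you assumed, never the quantitative bound $Ce^{-\kappa y}$.

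The missing idea — the crux of the paper's Step 2 — is to make the reference point recede relative to $\tilde m(t)$ at a strictly positive speed: pick an increasing affine $f$ with $\tilde m'(t)-f'(t)\ge\eta>0$ and consider $I_{(t_0,x_0)}(t)=\int_\RR u^2(t,x+\tilde m(t))\,\varphi\bigl(x-x_0+f(t)-f(t_0)\bigr)\,dx$ with $\varphi$ decreasing of exponential profile. Then (i) the far-field nonlinear error at time $t$ is $O\bigl(e^{-\kappa(-x_0+f(t)-f(t_0))}\bigr)$, exponentially decaying in $t$, hence time-integrable with total $O(e^{\kappa x_0})$ — this is precisely where the exponential decay in the depth $|x_0|$ comes from; and (ii) since the lag $f(t)-f(t_0)-x_0$ tends to $+\infty$, the non-dispersion hypothesis forces $I_{(t_0,x_0)}(t)\to 0$ as $t\to+\infty$ (Lemma \ref{limite}), so the almost-monotonicity yields $I_{(t_0,x_0)}(t_0)\le C e^{\kappa x_0}$. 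Without this time-dependent shift your argument stalls at the $\varepsilon$-level; with it, your remaining steps (induction on $k$ as in Laurent--Martel, then $H^1\hookrightarrow L^\infty$ on sliding windows) do go through as in the paper.
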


\begin{Rq}\label{est_left}
It is to be noticed that, if $J$ in Proposition \ref{pointwise2} is replaced by a neighborhood $J'$ of $-\infty$, then we conclude with an estimate at the right of $M(t):=\max\{a(t),b(t)\}$, or more precisely with the existence of $\kappa>0$ such that for all $k\in\NN$, there exists $K_k>0$ such that
\begin{equation}\label{rightpoint}
\forall\;t\in J',\:\forall\;x>M(t),\qquad \big|\partial^k_xu(t,x)\big|\leq K_ke^{-\frac{\kappa}{2}\left(x-M(t)\right)}.
\end{equation} 
This is justified by the following symmetry property for (gKdV) and the assumption in Propositions \ref{pointwise2}. Denoting $\hat{u}(t,x):=u(-t,-x)$, $\hat{a}(t):=-a(-t)$, $\hat{b}(t):=-b(-t)$, $\hat{m}(t):=-m(-t)$, and $\hat{M}(t):=-M(-t)$, we observe that $u$ satisfies the assumptions of Proposition \ref{pointwise} on a neighborhood $J'$ of $-\infty$ if and only if $\hat{u}$ satifies the same assumptions on $-J'$ (which is a neighborhood of $+\infty$) with $a$, $b$, $m$, and $M$ replaced respectively by $\hat{a}$, $\hat{b}$, $\hat{M}$, and $\hat{m}$ in Proposition \ref{pointwise2}. Thus once we have proved (\ref{leftpoint2}) as stated in Proposition \ref{pointwise2}, we have immediately the pointwise estimate on $\hat{u}(t)$ at the left of $\hat{m}(t)$ for $t\in -J'$, which precisely provides (\ref{rightpoint}), that is the desired pointwise estimate on $u(t)$ at the right of $M(t)$ for $t\in J'$. \\
Obviously, Propositions \ref{pointwise} and \ref{pointwise2} apply in particular with $J=\RR$, in which case both estimates \eqref{leftpoint2} and \eqref{rightpoint} hold.
\end{Rq}

Now, proceeding essentially as Laurent and Martel \cite[Theorem 1]{laurent}, we derive the proof of Proposition \ref{pointwise2}.

\begin{proof}

\noindent \textit{Step 1: Estimates to be established} \\

By the classical Sobolev embedding  $H^1\left(-\infty,m(t)\right)\hookrightarrow L^\infty\left(-\infty,m(t)\right)$, it suffices to see that
\begin{equation}\label{estH1}
\exists\; K>0,\:\forall\;t\in J,\qquad \int_{x<m(t)}\big(u^2(t,x)+u_x^2(t,x)\big)e^{\kappa(m(t)-x)}\;dx\leq K
\end{equation}
holds to have the desired conclusion, that is \eqref{leftpoint2}, for $k=0$ and for almost every $x<m(t)$. Using that $u(t)$ is continuous on $\RR$ by $H^1(\RR)\hookrightarrow \mathscr{C}(\RR)$, we deduce that \eqref{leftpoint2} is true for $k=0$. \\
Similarly, to reach the whole conclusion, we have to show that for each $k\in\NN$, there exists $\tilde{K}_k>0$ such that
\begin{equation}\label{estHk}
\forall\;t\in J,\qquad \int_{x<m(t)}\big(\partial^k_{x}u(t,x)\big)^2e^{\kappa(m(t)-x)}\;dx\leq \tilde{K}_k.
\end{equation}

In order to prove \eqref{estHk}, it is convenient to introduce a well-chosen $\mathscr{C}^1$ function defined on $J$, denoted by $\tilde{m}$, which replaces somehow $m$ in the case where $m$ is not already $\mathscr{C}^1$. By this means, we get around the difficulty of a possible point where $m$ is not differentiable. This is the purpose of the following:

\begin{Claim}\label{function_tildem}
There exists $\tilde{m}:J\to\RR$ of class $\mathscr{C}^1$ such that for all $t\in J$, $m(t)\leq\tilde{m}(t)\leq m(t)+1$, and $\tilde{m}'(t)\geq \delta$.
\end{Claim}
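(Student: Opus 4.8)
The goal is to construct a $\mathscr{C}^1$ function $\tilde{m}$ that stays within distance $1$ above $m=\min\{a,b\}$ and whose derivative is bounded below by $\delta$. The only difficulty is that $m$, being a minimum of two $\mathscr{C}^1$ functions, may fail to be differentiable at points where $a$ and $b$ cross. Away from such crossing points $m$ coincides locally with one of $a,b$ and is already smooth with derivative at least $\delta$ by \eqref{inegder}; the issue is purely local smoothing near the (possibly infinitely many) crossings.

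\begin{proof}
The plan is to take $\tilde{m}$ to be a smoothed, slightly shifted-up version of $m$. First I would record the two key features of $m$ coming from hypothesis \eqref{inegder}: since $\delta\le a'\le \beta$ and $\delta\le b'\le\beta$ pointwise, both $a$ and $b$ are strictly increasing and $\delta$-Lipschitz from below, hence so is $m=\min\{a,b\}$; in particular $m$ is Lipschitz (with constant $\beta$) and therefore differentiable almost everywhere, and wherever it is differentiable one has $m'\ge\delta$. Thus $m$ already has all the desired properties except $\mathscr{C}^1$ regularity at the crossing points of $a$ and $b$.

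\medskip

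To repair the regularity while keeping $m\le\tilde m\le m+1$, I would mollify. Concretely, fix a standard nonnegative mollifier $\chi\in\mathscr{C}^\infty_c(\RR)$ supported in $(0,1)$ with $\int\chi=1$, and set
\[
\tilde m(t):=\int_{0}^{1}m(t-s)\,\chi(s)\,ds,
\]
after first extending $m$ to a slightly larger interval (if $J$ has a finite left endpoint) by the affine function matching $m$ and having slope in $[\delta,\beta]$, so that the defining properties persist. This convolution is $\mathscr{C}^\infty$ (in fact only $\mathscr{C}^1$ is needed) because $m$ is continuous. For the derivative lower bound I would differentiate under the integral and use that $m$ is Lipschitz, so $m'$ exists a.e.\ with $m'\ge\delta$; a standard argument (writing $\tilde m'$ as an average of difference quotients, or equivalently convolving $m'$ with $\chi$) gives
\[
\tilde m'(t)=\int_{0}^{1}m'(t-s)\,\chi(s)\,ds\ge\delta\int_0^1\chi(s)\,ds=\delta.
\]

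\medskip

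For the sandwiching $m\le\tilde m\le m+1$ I would use monotonicity of $m$: since $m$ is increasing and $s\in(0,1)$, we have $m(t-s)\le m(t)$, whence $\tilde m(t)\le m(t)$, which is the wrong direction—so instead I would average over a window to the \emph{right}, i.e.\ replace $m(t-s)$ by $m(t+s)$ in the convolution, giving $m(t)\le\tilde m(t)$ by monotonicity on the one hand, and $\tilde m(t)=\int_0^1 m(t+s)\chi(s)\,ds\le m(t+1)\le m(t)+\beta$ on the other. To land inside $[m(t),m(t)+1]$ rather than $[m(t),m(t)+\beta]$, I would simply rescale the mollifier so that it is supported in $(0,1/\beta)$ instead of $(0,1)$; then $\tilde m(t)\le m(t+1/\beta)\le m(t)+1$, while still $\tilde m(t)\ge m(t)$ and $\tilde m'(t)\ge\delta$ by the same computation. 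This gives all three requested properties.

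\medskip

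The main obstacle, such as it is, is purely technical bookkeeping at the boundary of $J$: if $J=[T_0,+\infty)$ one must extend $m$ slightly to the left of $T_0$ so that the rightward-averaged convolution is defined on all of $J$ and retains the slope bound $m'\ge\delta$; this is handled by the affine extension noted above. Everything else is an elementary consequence of the monotonicity and Lipschitz estimates forced by \eqref{inegder}.
\end{proof}
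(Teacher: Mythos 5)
Your proof is correct, but it proceeds by a genuinely different route than the paper. The paper's proof is a one-line explicit construction with no measure theory at all: starting from the identity $\min\{a,b\}=\frac{a+b}{2}-\frac{|a-b|}{2}$, it replaces the absolute value by its smooth majorant $\sqrt{1+x^2}$ and sets
\[
\tilde{m}(t):=1+\frac{a(t)+b(t)}{2}-\sqrt{1+\left(\frac{b(t)-a(t)}{2}\right)^2},
\]
so that $m\leq\tilde{m}\leq m+1$ follows from $|x|\leq\sqrt{1+x^2}\leq |x|+1$, and the chain rule gives $\tilde{m}'\geq\frac{a'+b'}{2}-\frac{|b'-a'|}{2}=\min\{a',b'\}\geq\delta$ pointwise. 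Your mollification argument buys more generality---it would handle the minimum of any finite family of such functions, or indeed any increasing $\beta$-Lipschitz function whose forward difference quotients are bounded below by $\delta$---at the cost of measure-theoretic input (a.e.\ differentiability of Lipschitz functions, differentiation under the integral). Two cleanups for your version: first, Rademacher's theorem is avoidable, since for $h>0$ one has directly $m(t+h)-m(t)\geq\min\{a(t+h)-a(t),\,b(t+h)-b(t)\}\geq\delta h$, so the difference quotients of $\tilde{m}$ are bounded below by $\delta$ with no appeal to $m'$; second, the boundary extension you describe is unnecessary (and your claim that it is needed for the rightward average is incorrect): since $J$ is a neighborhood of $+\infty$, the average $\int_0^{1/\beta}m(t+s)\chi(s)\,ds$ only uses values of $m$ at points $t+s\in J$, so the construction is well defined on all of $J$ exactly as written.
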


\begin{proof}[Proof of Claim \ref{function_tildem}]
Define $\tilde{m}$ by $$\forall\;t\in J,\quad \tilde{m}(t):=1+\frac{a(t)+b(t)}{2}-\sqrt{1+\left(\frac{b(t)-a(t)}{2}\right)^2}.$$ Then $\tilde{m}$ is $\mathscr{C}^1$ on $J$ and given that $\min\{a,b\}=\frac{a+b}{2}-\frac{|a-b|}{2}$, one can check that $\tilde{m}$ satisfies $m(t)\leq\tilde{m}(t)\leq m(t)+1$ by means of the well-known inequality $$\sqrt{x+y}\leq \sqrt{x}+\sqrt{y},\qquad \text{valid for all } x,y\geq 0.$$
Moreover, by a straightforward computation, we have $$\tilde{m}'(t)\geq \frac{a'(t)+b'(t)}{2}-\frac{b'(t)-a'(t)}{2}\geq a'(t).$$ 
Consequently, Claim \ref{function_tildem} is proved.
\end{proof}

Now, we consider $\tilde{m}$ as in the previous claim. Judging by the fact that for all $t\in J$, $m(t)\leq \tilde{m}(t)$, we can write for all $k\in\NN$
\begin{equation}
\int_{x<m(t)}\left(\partial_x^ku(t,x)\right)^2e^{\kappa(m(t)-x)}\;dx\leq \int_{x<\tilde{m}(t)}\left(\partial_x^ku(t,x)\right)^2e^{\kappa(\tilde{m}(t)-x)}\;dx.
\end{equation}
Thus, to achieve our goal (\ref{estHk}), it suffices to show the existence of $C_k>0$ such that 
\begin{equation}\label{ineqL2bis}
\forall\;t\in\RR,\quad\int_{x<\tilde{m}(t)}\left(\partial_x^ku(t,x)\right)^2e^{\kappa(\tilde{m}(t)-x)}\;dx\leq C_k.
\end{equation}

\noindent\textit{Step 2: Proof of \eqref{ineqL2bis} for $k=0$} \\ 

We will obtain (\ref{ineqL2bis}) by a strong monotonicity property which is the purpose of Lemma \ref{monot} and Lemma \ref{limite} below. \\

\noindent Let us introduce, for some $\kappa>0$ to be determined later, the function $\varphi$ defined by
\[ \varphi(x)=\frac{1}{2}-\frac{1}{\pi}\arctan (e^{\kappa x}). \] 
It satisfies the following properties
\begin{align}\label{prop1}
\exists\;\lambda_0>0,\:\forall\;x\in\RR,\qquad \lambda_0e^{-\kappa|x|} & <-\varphi '(x)<\frac{1}{\lambda_0}e^{-\kappa|x|}, \\
\label{prop2}
\forall\;x\in\RR,\qquad |\varphi^{(3)}(x)| & \leq-\kappa^2\varphi '(x). \\
\label{prop3}
\exists\;\lambda_1>0,\:\forall\;x\geq 0,\qquad\lambda_1e^{-\kappa x} & \leq\varphi(x).
\end{align}

\noindent Moreover, let us observe that 
\begin{equation}\label{egalite_integrale_tildeM}
\int_{x<\tilde{m}(t)}u^2(t,x)e^{\kappa(\tilde{m}(t)-x)}\;dx=\displaystyle\int_{x<0}u^2\left(t,x+\tilde{m}(t)\right)e^{-\kappa x}\;dx,
\end{equation}
and that, for all $x_0<0$,
\begin{equation}\label{ineq1}
\begin{aligned}
\int_{x_0\leq x< 0}u^2\left(t,x+\tilde{m}(t)\right)e^{-\kappa x}\;dx&\leq e^{-\kappa x_0}\int_{x\geq x_0}u^2\left(t,x+\tilde{m}(t)\right)e^{-\kappa (x-x_0)}\;dx\\
&\leq\displaystyle\frac{1}{\lambda_1} e^{-\kappa x_0}\int_{x\geq x_0}u^2\left(t,x+\tilde{m}(t)\right)\varphi(x-x_0)\;dx\\
&\leq\displaystyle\frac{1}{\lambda_1} e^{-\kappa x_0}\int_{\RR}u^2\left(t,x+\tilde{m}(t)\right)\varphi(x-x_0)\;dx.
\end{aligned}
\end{equation}

\noindent By Claim \ref{function_tildem}, for all $t\in J$, $\tilde{m}'(t)\geq\delta$. Therefore there exists $\eta>0$ and an increasing affine function $f:\RR\rightarrow\RR$ such that \begin{equation}\label{defeta}
\forall\;t\in J,\quad -f'(t)+\tilde{m}'(t)\geq \eta.
\end{equation}

\noindent Now, for fixed $t_0\in J$ and $x_0$ in $\RR$, consider 
$$\begin{array}{cccl}
I_{(t_0,x_0)}:&\RR&\rightarrow&\RR^+\\
&t&\mapsto&\displaystyle\int_\RR u^2(t,x+\tilde{m}(t))\varphi\big(x-x_0+f(t)-f(t_0)\big)\;dx.
\end{array}$$

\noindent We have \begin{equation}
\forall\;t\in\RR,\qquad I_{(t_0,x_0)}(t)=\displaystyle\int_\RR u^2(t,x)\varphi\big(x-x_0+f(t)-f(t_0)-\tilde{m}(t)\big)\;dx,
\end{equation} 
so that by derivation with respect to $t$, we obtain 
\begin{equation}\label{derivee}
\begin{aligned}
\displaystyle\frac{dI_{(t_0,x_0)}}{dt}(t)=&\displaystyle-3\int_{\RR}u_x^2(t,x)\varphi '(\tilde{x})\;dx-\big(-f'(t)+\tilde{m}'(t)\big)\int_\RR u^2(t,x)\varphi '(\tilde{x})\;dx\\
&\displaystyle+\int_\RR u^2(t,x)\varphi^{(3)}(\tilde{x})\;dx+\frac{2p}{p+1}\int_\RR u^{p+1}(t,x)\varphi '(\tilde{x})\;dx,
\end{aligned}
\end{equation}
where $\tilde{x}:=x-x_0+f(t)-f(t_0)-\tilde{m}(t)$.

\noindent Set $\kappa:=\sqrt{\frac{\eta}{2}}$. We claim then

\begin{Lem}\label{monot} There exists $C_0>0$ such that for all $x_0\in \RR$, and for all $t_0,t\in J$, 
\begin{equation}
\frac{dI_{(t_0,x_0)}}{dt}(t)\geq -C_0e^{-\kappa\big(-x_0+f(t)-f(t_0)\big)}.
\end{equation}
\end{Lem}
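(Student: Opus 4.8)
The plan is to read off the four terms in the expression \eqref{derivee} for $\frac{dI_{(t_0,x_0)}}{dt}(t)$, show that the first three combine into a nonnegative quantity controlling $P:=\int_\RR u^2(t,x)\,(-\varphi'(\tilde x))\,dx\ge 0$, and then bound the nonlinear fourth term from below by absorbing part of it into this good quantity and leaving only an exponentially small error. Throughout I abbreviate $y:=-x_0+f(t)-f(t_0)$, so that the target bound reads $\frac{dI_{(t_0,x_0)}}{dt}(t)\ge -C_0 e^{-\kappa y}$.

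First I would treat the linear part. Since $\varphi'<0$, the first term $-3\int u_x^2\varphi'(\tilde x)$ is nonnegative. For the second and third terms, I use that $\varphi^{(3)}\ge \kappa^2\varphi'$ (an immediate consequence of \eqref{prop2} together with $\varphi'<0$), whence $\int u^2\varphi^{(3)}\ge \kappa^2\int u^2\varphi'=-\kappa^2 P$; combined with $-f'(t)+\tilde m'(t)\ge \eta=2\kappa^2$ (this is exactly how $\kappa$ was chosen, via \eqref{defeta} and Claim \ref{function_tildem}) and $-\int u^2\varphi'=P$, the second term is $\ge 2\kappa^2 P$. Hence the first three terms together are bounded below by $\kappa^2 P$.

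The heart of the proof is the nonlinear term, which I bound from below by $-\frac{2p}{p+1}\int |u|^{p+1}(-\varphi'(\tilde x))$ and then split the integral at $x=m(t)-R_\varepsilon$, where $\varepsilon$ is to be fixed. On the left region $x<m(t)-R_\varepsilon$, the non-dispersion hypothesis \eqref{L2comp'} gives $\int_{x<m(t)-R_\varepsilon}u^2\le\varepsilon$; combined with the uniform $H^1$ bound and a Gagliardo--Nirenberg inequality on the half-line, this yields $\|u(t)\|_{L^\infty(x<m(t)-R_\varepsilon)}\le C\varepsilon^{1/4}$. Writing $|u|^{p+1}=|u|^{p-1}u^2$, I choose $\varepsilon$ (hence $R_\varepsilon$) small enough, depending only on $\kappa$, $p$ and the $H^1$ bound, so that $\frac{2p}{p+1}\|u\|_{L^\infty(x<m(t)-R_\varepsilon)}^{p-1}\le\kappa^2$; the left contribution is then $\le\kappa^2\int_{x<m(t)-R_\varepsilon}u^2(-\varphi')\le \kappa^2 P$, and is thus exactly absorbed by the good term. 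On the right region $x\ge m(t)-R_\varepsilon$, I use $\tilde m(t)\le m(t)+1$ (Claim \ref{function_tildem}) to get $\tilde x=x+y-\tilde m(t)\ge y-R_\varepsilon-1$, so by \eqref{prop1} the weight is pointwise $\le \frac1{\lambda_0}e^{-\kappa(y-R_\varepsilon-1)}$ when $y\ge R_\varepsilon+1$ (and the whole integral is trivially $O(e^{-\kappa y})$ when $y<R_\varepsilon+1$, since then $e^{-\kappa y}\ge e^{-\kappa(R_\varepsilon+1)}$); bounding $|u|^{p-1}$ by the $L^\infty$ bound and $\int u^2$ by the $L^2$ mass, the right contribution is $\le C_0 e^{-\kappa y}$. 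Adding the three estimates gives $\frac{dI_{(t_0,x_0)}}{dt}(t)\ge \kappa^2 P-\kappa^2 P-C_0e^{-\kappa y}=-C_0e^{-\kappa y}$, as claimed.

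I expect the main obstacle to be the nonlinear term, and specifically the observation that the $L^2$ non-dispersion bound \eqref{L2comp'} upgrades, through Gagliardo--Nirenberg, to an $L^\infty$ smallness on the left region strong enough to beat the fixed constant $\kappa^2$. The decisive point is the order of quantifiers: $\kappa$ is determined first (by $\eta$), and only afterwards is $\varepsilon$ chosen small relative to it, so that the left nonlinear contribution is genuinely \emph{absorbed} rather than merely controlled. One must finally keep track that $C_0$ depends on $R_\varepsilon$, the $H^1$ bound and $\kappa$, but on none of $t$, $t_0$, $x_0$, which is precisely what makes the monotonicity estimate uniform.
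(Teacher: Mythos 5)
Your proof is correct and follows essentially the same strategy as the paper's: the linear terms yield the good quantity $\kappa^2 P$ (the paper writes this as $\eta P-\tfrac{\eta}{2}P$ with $\kappa^2=\eta/2$), the nonlinear term is split into a region near the left where non-dispersion plus Gagliardo--Nirenberg gives $L^\infty$-smallness allowing absorption into $\kappa^2 P$, and a remaining region where $-\varphi'(\tilde x)$ is exponentially small in $-x_0+f(t)-f(t_0)$. The only cosmetic differences are that you split in the spatial variable $x$ at $m(t)-R_\varepsilon$ while the paper splits in $\tilde x$ (its inner region is then shown to lie in $\{x\le m(t)-R+1\}$, so the two splits coincide in substance), and that you bound the tail via $\|u\|_{L^\infty}^{p-1}\|u\|_{L^2}^2$ where the paper uses the Sobolev embedding $H^1\hookrightarrow L^{p+1}$.
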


\begin{proof}[Proof of Lemma \ref{monot}] 
Due to the choice of $\kappa$ and property (\ref{prop2}) of $\varphi$, we have 
\begin{equation}\label{int0}
\left|\int_{\RR}u^2(t,x)\varphi^{(3)}(\tilde{x})\;dx\right|\leq - \frac{\eta}{2}\int_\RR u^2(t,x)\varphi '(\tilde{x})\;dx. 
\end{equation}

\noindent Furthermore we control the non-linear part by considering, for $R>0$, $$\displaystyle I_1(t):=\int_{|\tilde{x}|>-x_0-R+f(t)-f(t_0)} u^{p+1}(t,x)\varphi '(\tilde{x})\;dx$$ and $$\displaystyle I_2(t):=\int_{\RR} u^{p+1}(t,x)\varphi '(\tilde{x})\;dx-I_1(t).$$
On the one hand, we have due to (\ref{prop1}) 
\begin{equation}\label{int1}
\begin{aligned}
\displaystyle\big|I_1(t)\big|&\leq \frac{1}{\lambda_0}e^{-\kappa\big(-x_0-R+f(t)-f(t_0)\big)}\left(\int_\RR|u|^{p+1}(t,x)\;dx\right)\notag \leq Ce^{-\kappa\big(-x_0-R+f(t)-f(t_0)\big)},
\end{aligned}
\end{equation}
where we have used the Sobolev embedding $H^1(\RR)\hookrightarrow L^{p+1}(\RR)$ and the fact that $u$ belongs to $L^\infty(J,H^1(\RR))$. Note that $C>0$ is independent of $x_0$, $t_0$, and $t$. \\
On the other, we observe that if $|\tilde{x}|\leq -x_0-R+f(t)-f(t_0)$, then 
$x\leq\tilde{m}(t)-R$ in particular, and therefore by Claim \ref{function_tildem} we have also $x\le m(t)-R+1.$
Thus, it follows
\begin{equation}
\begin{aligned}
\displaystyle\big|I_2(t)\big|&\leq\|u(t)\|_{L^\infty\big(x\le m(t)-R+1\big)}^{p-1}\displaystyle\int_{x\le m(t)-R+1}u^2(t,x)|\varphi '(x)|\;dx\\
&\leq\displaystyle\sqrt{2}\|u(t)\|_{L^2\big(x\le m(t)-R+1\big)}^{\frac{p-1}{2}}\|u_x(t)\|_{L^2\big(x\le m(t)-R+1\big)}^{\frac{p-1}{2}} \displaystyle\int_{\RR}u^2(t,x)|\varphi '(x)|\;dx\\
&\leq\displaystyle\sqrt{2}\|u(t)\|_{L^2\big(x\le m(t)-R+1\big)}^{\frac{p-1}{2}}\sup_{t\in\RR}\|u(t)\|_{H^1}^{\frac{p-1}{2}} \displaystyle\int_{\RR}u^2(t,x)|\varphi '(x)|\;dx.
\end{aligned}
\end{equation}  
By the non-dispersion assumption (\ref{L2comp}), we can choose $R>1$ such that 
$$\sqrt{2}\|u(t)\|_{L^2\big(x\le m(t)-R+1\big)}^{\frac{p-1}{2}}\sup_{t\in\RR}\|u(t)\|_{H^1}^{\frac{p-1}{2}}\leq \frac{p+1}{4p}\eta.$$
Taking into account (\ref{int1}), this leads eventually to the following estimate
\begin{equation}\label{int}
\frac{2p}{p+1}\displaystyle\left|\int_\RR u^{p+1}(t,x)\varphi '(\tilde{x})\;dx\right|\leq -\frac{\eta}{2}\int_{\RR}u^2(t,x)\varphi '(x)\;dx+C_0e^{-\kappa\big(-x_0-R+f(t)-f(t_0)\big)},
\end{equation}
where $C_0:=\frac{2p}{p+1}C$ is independent of $x_0$, $t_0$, and $t$. Gathering (\ref{defeta}), (\ref{int0}), and (\ref{int}) in (\ref{derivee}) we deduce finally
$$\frac{dI_{(t_0,x_0)}}{dt}(t)\geq -3\int_{\RR}u_x^2(t,x)\varphi '(\tilde{x})\;dx-C_0e^{-\kappa\big(-x_0-R+f(t)-f(t_0)\big)}.$$ 

\noindent Thus Lemma \ref{monot} is established.
\end{proof}

As a consequence of the preceding lemma,
\begin{equation}\label{monot1}
\exists\;C_1>0,\:\forall\;x_0\in\RR,\:\forall\;t\geq t_0,\qquad I_{(t_0,x_0)}(t_0)\leq I_{(t_0,x_0)}(t)+C_1e^{\kappa x_0},
\end{equation}
with $C_1$ independent of the parameters $x_0$ and $t_0$. Next, we claim the following:

\begin{Lem}\label{limite}
For fixed $x_0\in\RR$ and $t_0\in J$, $I_{(t_0,x_0)}(t)\rightarrow 0$ as $t\rightarrow+\infty$.
\end{Lem}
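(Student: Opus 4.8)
The plan is to exploit three structural facts about the function $\varphi$ and the data, namely that $\varphi$ is positive, nonincreasing, bounded above by $1/2$, and tends to $0$ at $+\infty$, together with the non-dispersion hypothesis \eqref{L2comp'} and the uniform bound coming from $u\in L^\infty(J,H^1(\RR))$. Writing $I_{(t_0,x_0)}(t)=\int_\RR u^2(t,x)\,\varphi(\tilde{x})\,dx$ with $\tilde{x}=x-x_0+f(t)-f(t_0)-\tilde{m}(t)$, I would first record that $I_{(t_0,x_0)}(t)\ge 0$ (since $\varphi>0$), so that it suffices to prove $\limsup_{t\to+\infty}I_{(t_0,x_0)}(t)\le 0$. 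Fixing $\varepsilon>0$ and taking the associated $R_\varepsilon$ from \eqref{L2comp'}, the strategy is to split the integral at the threshold $x=m(t)-R_\varepsilon$ and estimate each piece separately.

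On the left region $\{x<m(t)-R_\varepsilon\}$, I would simply bound $\varphi(\tilde{x})\le 1/2$ and invoke the non-dispersion assumption, which yields $\int_{x<m(t)-R_\varepsilon}u^2(t,x)\varphi(\tilde{x})\,dx\le\frac12\int_{x<m(t)-R_\varepsilon}u^2(t,x)\,dx\le\frac{\varepsilon}{2}$, uniformly in $t\in J$.

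On the right region $\{x\ge m(t)-R_\varepsilon\}$, the key point is that the argument $\tilde{x}$ is uniformly large once $t$ is large. Indeed, using $m(t)\ge\tilde{m}(t)-1$ from Claim \ref{function_tildem}, one obtains on this region $\tilde{x}\ge f(t)-f(t_0)-R_\varepsilon-x_0-1$; since $f$ is affine and increasing, the right-hand side tends to $+\infty$ as $t\to+\infty$. By monotonicity of $\varphi$, the factor $\varphi(\tilde{x})$ is then bounded by $\varphi$ evaluated at this diverging quantity, which tends to $0$; multiplying by $\|u(t)\|_{L^2}^2\le\sup_{t\in J}\|u(t)\|_{H^1}^2<\infty$ shows that this piece tends to $0$ as $t\to+\infty$. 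Combining the two bounds gives $\limsup_{t\to+\infty}I_{(t_0,x_0)}(t)\le\varepsilon/2$, and letting $\varepsilon\to 0$, together with the positivity $I_{(t_0,x_0)}\ge 0$, yields the claim.

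I expect no serious obstacle here: the argument reduces to a single splitting combined with the two structural inputs (non-dispersion on the left, the uniform $H^1$ bound and decay of $\varphi$ on the right). The only point requiring a little care is verifying that $\tilde{x}$ diverges uniformly on the right region, which rests precisely on the divergence of the affine function $f$ and on the built-in closeness $0\le\tilde{m}-m\le 1$ from the construction of $\tilde{m}$; both are already available.
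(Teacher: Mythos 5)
Your proof is correct and follows essentially the same route as the paper: split the mass integral into a left region handled by the non-dispersion hypothesis and a right region where the decreasing function $\varphi$, evaluated at an argument driven to $+\infty$ by the affine function $f$, kills the uniformly bounded $L^2$ mass. The only cosmetic difference is that you split at $m(t)-R_\varepsilon$ and use $0\le\tilde{m}-m\le 1$ on the right piece, whereas the paper splits at $\tilde{m}(t)-\tilde{R}$ and absorbs that same inequality into the choice of $\tilde{R}$.
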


\begin{proof}
To show this lemma, we just repeat the arguments given by Laurent and Martel \cite[paragraph 2.1, Step 2]{laurent}.
Let $\varepsilon$ be a positive real number.
By Claim \ref{function_tildem} and by (\ref{L2comp'}), there exists $\tilde{R}>0$ such that 
\[ \int_{x<\tilde{m}(t)-\tilde{R}}u^2(t,x)\;dx\leq\frac{\varepsilon}{2}.\] 
Since $0\leq \varphi\leq 1$, this enables us to see that 
\begin{align}
\displaystyle\int_{x<-\tilde{R}}u^2\left(t,x+\tilde{m}(t)\right)\varphi\big(x-x_0+f(t)-f(t_0)\big)\;dx&\leq\displaystyle\int_{x<\tilde{m}(t)-\tilde{R}}u^2(t,x)\;dx \leq\frac{\varepsilon}{2}.
\end{align}

\noindent Now, recall that $\varphi$ is decreasing so that 
\begin{align}
\MoveEqLeft[4]
\int_{x\geq-\tilde{R}}u^2(t,x+\tilde{m}(t))\varphi\big(x-x_0+f(t)-f(t_0)\big)\;dx \notag\\
&\leq\displaystyle\varphi\big(-\tilde{R}-x_0+f(t)-f(t_0)\big)\|u(t)\|_{L^2}^2\notag\\
&\leq\displaystyle \overline{C}\varphi\big(-\tilde{R}-x_0+f(t)-f(t_0)\big),
\end{align} with $\overline{C}=\|u(t)\|_{L^2}^2$ for all $t\in J$. Moreover, since $f(t)\to +\infty$ as $t\to +\infty$ and $\varphi(x)\to 0$ as $x\to +\infty$, there exists $T\in\RR$ such that for all $t\geq T$, 
$$\overline{C}\varphi\big(-\tilde{R}-x_0+f(t)-f(t_0)\big)\leq\frac{\varepsilon}{2}.$$
Then, for all $t\geq T$,
$$I_{(t_0,x_0)}(t)\leq\frac{\epsilon}{2}+\frac{\varepsilon}{2}=\varepsilon.$$
Hence, we have finished proving Lemma \ref{limite}.
\end{proof}

Due to (\ref{monot1}) and Lemma \ref{limite}, we obtain
\begin{equation}
\forall\;t_0\in J,\;\forall\;x_0\in\RR,\qquad I_{(t_0,x_0)}(t_0)\leq C_1e^{\kappa x_0}.
\end{equation}
Thus, (\ref{ineq1}) leads to: for all $t\in J$,
$$\int_{x_0\leq x<0}u^2(t,x+\tilde{m}(t))e^{-\kappa x}\;dx\leq \frac{C_1}{\lambda_1}.$$
Thus letting $x_0$ tend to $-\infty$, we deduce from \eqref{egalite_integrale_tildeM} that
$$\displaystyle\int_{x<\tilde{m}(t)}u^2(t,x)e^{\kappa(\tilde{m}(t)-x)}\;dx\leq \frac{C_1}{\lambda_1}.$$

\noindent\textit{Step 3: Proof of (\ref{estHk}) for $k\ge 1$}\\

Starting from the fact that for all $t\geq t_0$ and for all $x_0<0$
$$I_{(t_0,x_0)}(t_0)-I_{(t_0,x_0)}(t)\leq \frac{C_1}{\lambda_1}e^{\kappa x_0}+3\int_{t_0}^{t}\int_\RR u_x^2(s,x+\tilde{m}(s))\varphi '(x-x_0+f(s)-f(t_0))\;dx\;ds$$
and arguing like Laurent and Martel \cite[paragraph 2.1 Step 3]{laurent}, one can show 
$$\int_{t_0}^{t_0+1}\int_\RR u_x^2\left(s,x+\tilde{m}(s)\right)e^{\kappa x}\;dx\;ds
\leq C,$$ where $C$ is independent of $x_0$ and $t_0$. \\

Now, one proves by induction on $k\in\NN$ the existence of $C_k\ge 0$ such that for all $t\in\RR$, $\|u(t)\|_{H^k}\le C_k$ and
\begin{equation}\label{est_induction}
\int_{\RR}\left(\partial_x^ku\right)^2\left(t,x+\tilde{m}(t)\right)e^{\kappa x}\;dx+\int_{t}^{t+1}\int_{\RR}\left(\partial_x^ku\right)^2\left(s,x+\tilde{m}(s)\right)e^{\kappa x}\;dx\;ds\le C_k.
\end{equation}
In particular, estimates \eqref{estHk} are then performed. Moreover, we deduce from the equation satisfied by $u$ that the partial derivatives with respect to $x$ and $t$ of all order exist and are continuous, thus $u\in\mathscr{C}^\infty(J\times\RR)$. 

For simplification purposes, we will not explicit the proof of \eqref{est_induction} and refer instead to \cite[paragraph 2.3 and paragraph 2.2 Step 2]{laurent}; the induction argument works since we assume $b'(t)\leq \beta$ in (\ref{inegder}), which implies that $\tilde{m}$ is bounded on $J$. 
\end{proof}

\section{Proof of Theorem {\normalfont\textbf{\ref{rigidite}}} and Corollary \normalfont\textbf{\ref{rigidite'}}}\label{proof}

We split the proof into four steps. The three first steps are common to both theorems and are valid under the hypotheses of Theorem \ref{rigidite}, whereas the last one is specific to the proof of Corollary \ref{rigidite'} where exponential decay properties are established and for which the stronger non dispersion assumption \eqref{L2compactnessprop'} is required. \\
Consider $u$ which satisfies the assumptions of Theorem \ref{rigidite}.\\ 

\noindent\textit{Step 1: Asymptotic stability in the energy space} \\

The following asymptotic stability result in the energy space is to be considered as a crucial tool for the proof. 

\begin{theorem}[Martel, Merle and Tsai \cite{tsai}; Martel and Merle \cite{asympstab}]\label{asympto}
Fix $0<c_1^0<\cdots<c_N^0$. For all $\beta>0$, there exist $L_0>0$ and $\alpha_0=\alpha_0(\beta)>0$ such that if $u\in\mathscr{C}([0,+\infty),H^1(\RR))$ is a solution of (gKdV) satisfying
\begin{equation}
\forall\;t\geq 0,\qquad \inf_{\begin{array}{c} r_i\in\RR\\ r_{i+1}-r_{i}>L_0\end{array}}\left\|u(t)-\sum_{i=1}^NQ_{c_i^0}(\cdot-r_i)\right\|_{H^1}<C\big(\alpha_0+e^{-\gamma t}\big)
\end{equation} for some positive constants $C$ and $\gamma$, then the following holds.
\begin{enumerate}
\item \textit{(Asymptotic stability in the energy space)} There exist $\mathscr{C}^1$ functions $t\mapsto c_i(t)\in\RR^*_+$, $t\mapsto \rho_i(t)\in\RR$ for $i\in\{1,\ldots,N\}$ such that
\begin{equation}
\lim_{t\to+\infty}\left\|u(t)-\sum_{i=1}^NQ_{c_i(t)}(\cdot-\rho_i(t))\right\|_{H^1\big(x>\beta t\big)}=0.
\end{equation}
\item \textit{(Convergence of the scaling parameter)} There exists $c_{i}^+\in\RR^*_+$ such that $\displaystyle\lim_{t\to+\infty}c_i(t)=c_{i}^+$.
\end{enumerate}
\end{theorem}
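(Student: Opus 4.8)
The statement is the asymptotic stability, in the energy space, of a decoupled and ordered sum of solitons, and the plan is to follow the modulation--monotonicity--rigidity scheme of Martel, Merle and Tsai, the crucial point being that the multi-soliton problem can be reduced to the single-soliton Liouville property already recalled in Theorem \ref{liouville}. First I would set up the modulation. Since the hypothesis forces $u(t)$ to stay within $C(\alpha_0+e^{-\gamma t})$ of an ordered sum $\sum_i Q_{c_i^0}(\cdot-r_i)$ with gaps larger than $L_0$, a standard implicit function theorem argument produces $\mathscr{C}^1$ parameters $c_i(t)>0$ and $\rho_i(t)$ and a remainder $\eta(t):=u(t)-\sum_{i=1}^N Q_{c_i(t)}(\cdot-\rho_i(t))$ satisfying well-chosen orthogonality conditions (for instance $\int\eta\,Q_{c_i}(\cdot-\rho_i)=\int\eta\,\partial_xQ_{c_i}(\cdot-\rho_i)=0$), together with $\|\eta(t)\|_{H^1}\le C(\alpha_0+e^{-\gamma t})$, smallness of $c_i'(t)$ and of $\rho_i'(t)-c_i(t)$, and a lower bound ensuring the gaps $\rho_{i+1}(t)-\rho_i(t)$ keep growing. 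Taking $\alpha_0$ small and $L_0$ large keeps the solitons well separated and moving at nearly their nominal speeds for all $t\ge 0$.

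Next comes the monotonicity step, which is the engine of the argument and exploits the specific left-to-right transport structure of (gKdV). For intermediate speeds $m_i\in(c_{i-1}^0,c_i^0)$ and a cutoff such as $\psi(y)=\frac{1}{2}(1+\tanh(\kappa y))$ (or the $\arctan$ profile of Proposition \ref{pointwise}), I would consider the localized mass $I_i(t)=\int u^2(t,x)\,\psi(x-m_it-y_i)\,dx$ and an analogous localized energy functional, and show they are almost monotone up to errors that are exponentially small in the minimal gap. Since the solitons lying to the right of $m_it$ move faster than $m_i$ while those to the left move slower, this quantifies the fact that no mass can concentrate to the left; subtracting consecutive cutoffs then controls the mass and energy attached to each individual soliton. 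Combining the limits of these quantities with conservation of total mass and energy yields part (2), namely $c_i(t)\to c_i^+$ for some $c_i^+>0$.

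Finally I would prove part (1), the convergence of $\eta$ to zero on $\{x>\beta t\}$, by a rigidity/compactness argument, localizing soliton by soliton. Arguing by contradiction, if asymptotic stability failed there would be times $t_n\to+\infty$ along which the local mass of $\eta$ near the $i$-th soliton stays bounded below; translating by $\rho_i(t_n)$ and passing to a weak $H^1$ limit produces a global solution $\tilde u$ of (gKdV) that remains close to the single soliton $Q_{c_i^+}$. The monotonicity formulas are exactly what guarantees that $\tilde u$ is non-dispersive in the sense of \eqref{L2compsoliton}, so Theorem \ref{liouville} forces $\tilde u$ to be an exact soliton; this makes the corresponding piece of $\eta$ vanish, contradicting the lower bound and establishing (1).

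The main obstacle I expect lies precisely in the monotonicity step together with the decoupling: one must choose the weights and the intermediate speeds $m_i$ so that the nonlinear and cross terms coming from soliton interaction are genuinely negligible (exponentially small in the gaps), and then justify carefully that the weak limit of the translated flow is a bona fide solution which is non-dispersive, so that the one-soliton Liouville theorem applies. Outside the $L^2$-subcritical regime the lack of coercivity of the linearized energy around a soliton makes both the choice of orthogonality conditions and the modulation estimates considerably more delicate.
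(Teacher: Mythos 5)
The paper offers no proof of this statement: Theorem \ref{asympto} is imported as a black box from Martel--Merle--Tsai \cite{tsai} and Martel--Merle \cite{asympstab} and is simply invoked in Step 1 of the proof of Theorem \ref{rigidite}, so there is no internal argument to compare against. Your sketch --- modulation via the implicit function theorem, almost-monotonicity of mass and energy localized along lines moving at intermediate speeds, then a weak-limit/compactness step closed by applying the one-soliton Liouville property (Theorem \ref{liouville}) to a non-dispersive limiting solution --- faithfully reconstructs the strategy of those cited references, and correctly identifies the point the paper itself emphasizes in its comments, namely that the single-soliton rigidity theorem is the essential input behind this asymptotic stability result (the ``role reversal'' the author mentions).
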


Set $\delta:=\frac{1}{2}\min\left\{c_1,\min_{i\in\{1,\dots,N-1\}}\{c_{i+1}-c_i\}\right\}$.\\
\noindent Using Theorem \ref{asympto} and adapting the classical modulation argument set up in the proof by Martel and Merle \cite[section 5]{asympstab}, we have the following. For all $i\in\{1,\ldots,N\}$, there exist $c_i:[0,+\infty)\rightarrow\RR^*_+$, $\rho_i:[0,+\infty)\rightarrow\RR$ of class $\mathscr{C}^1$ such that, defining
\begin{equation}
\epsilon:(t,x)\mapsto u(t,x)-\sum_{i=1}^NQ_{c_i(t)}(x-\rho_i(t)),
\end{equation}
and for $\alpha$ small enough in Theorems \ref{rigidite} and \ref{rigidite'}, we have 
\begin{enumerate}
\item the conclusion of Theorem \ref{asympto}, that is 
\begin{equation}\label{right}
\lim_{t\to+\infty}\|\epsilon(t)\|_{H^1\big(x>\frac{c_1}{A}t\big)}=0,
\end{equation}
with $A>3$ such that 
\begin{equation}\label{choix}
\frac{c_1}{A}<\rho-\sigma,
\end{equation} (take $\sigma=0$ for Theorem \ref{rigidite}) and
\begin{equation}\label{scalingparlim}
\forall\;i\in\{1,\dots,N\},\:\exists\;c_i^+\in\RR^*_+,\qquad \lim_{t\to+\infty}c_i(t)=c_i^+;
\end{equation}
\item control on the modulation parameters \cite[proof of Lemma 1]{tsai}: more precisely, there exists $K>0$ such that for all $t$ large enough,
\begin{equation}\label{translpar}
\forall\;i\in\{1,\ldots,N-1\},\quad
\rho_{i+1}(t)-\rho_i(t)\geq\delta t,
\end{equation}
and for all $i\in\{1,\ldots,N\}$,
\begin{equation}\label{scalingpar}
|c_i(t)-c_i|+\|\epsilon(t)\|_{H^1}\leq \frac{\delta}{K+1},
\end{equation}
\begin{equation}\label{control}
|\rho_i'(t)-c_i(t)|\leq K\left(\int_\RR\epsilon^2(t,x)e^{-\sqrt{c_1}|x-\rho_i(t)|}\;dx\right)^{\frac{1}{2}}.
\end{equation}
\end{enumerate}

\begin{Rq}
The preceding choices of $A$, $K$, and of the functions $t\mapsto c_i(t)$ as defined before are possible, provided $\alpha$ is sufficiently small.\\
\noindent Note also that estimate (\ref{scalingpar}) and assertion (\ref{scalingparlim}) guarantee that $0<c_1^+<\cdots<c_N^+$ (due to the choice of $\delta$).
\end{Rq}

\noindent\textit{Step 2: Convergence of $u(t)-\sum_{i=1}^NQ_{c_i^+}(\cdot-\rho_i(t))$ as $t\to +\infty$} \\

\begin{Lem}
We have $$\left\|u(t)-\sum_{i=1}^NQ_{c_i^+}(\cdot-\rho_i(t))\right\|_{H^1}\to 0,\qquad \text{as } t\to +\infty.$$
\end{Lem}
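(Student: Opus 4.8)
The plan is to write, for each $t$,
\[
u(t)-\sum_{i=1}^N Q_{c_i^+}(\cdot-\rho_i(t))=\epsilon(t)+\sum_{i=1}^N\Big(Q_{c_i(t)}(\cdot-\rho_i(t))-Q_{c_i^+}(\cdot-\rho_i(t))\Big),
\]
and to treat the two contributions separately. Since translations are isometries of $H^1(\RR)$ and the map $c\mapsto Q_c$ is continuous from $\RR^*_+$ into $H^1(\RR)$, the $i$-th term of the second sum has $H^1$-norm $\|Q_{c_i(t)}-Q_{c_i^+}\|_{H^1}$, which tends to $0$ by the convergence \eqref{scalingparlim} of the scaling parameters. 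The whole problem thus reduces to showing that $\|\epsilon(t)\|_{H^1}\to 0$ as $t\to+\infty$.

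For this I would split $\RR$ along the line $x=\frac{c_1}{A}t$. On the right, $\|\epsilon(t)\|_{H^1(x>\frac{c_1}{A}t)}\to 0$ is exactly the asymptotic-stability conclusion \eqref{right} already established in Step 1, so it only remains to control the left region. There I would use the triangle inequality
\[
\|\epsilon(t)\|_{H^1(x\le\frac{c_1}{A}t)}\le \|u(t)\|_{H^1(x\le\frac{c_1}{A}t)}+\sum_{i=1}^N\big\|Q_{c_i(t)}(\cdot-\rho_i(t))\big\|_{H^1(x\le\frac{c_1}{A}t)}.
\]
The $u$-term is handled by Proposition \ref{pointwise}: since $\frac{c_1}{A}<\rho-\sigma\le\rho$, the region $\{x\le\frac{c_1}{A}t\}$ is contained in $\{x<\rho t\}$, where \eqref{leftpoint} yields $|u(t,x)|+|u_x(t,x)|\le C\,e^{-\frac{\kappa}{2}(\rho t-x)}$; integrating in $x$ over $x\le\frac{c_1}{A}t$ then gives a bound of order $e^{-\kappa(\rho-\frac{c_1}{A})t}$, which tends to $0$.

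The soliton terms require showing that each center escapes the threshold line, namely $\rho_i(t)-\frac{c_1}{A}t\to+\infty$; by the ordering \eqref{translpar} it suffices to prove this for the leftmost one, $i=1$. This is the main point. Combining \eqref{control} with the bound $e^{-\sqrt{c_1}|x-\rho_1(t)|}\le 1$ on the weight and with \eqref{scalingpar} gives $|\rho_1'(t)-c_1(t)|\le K\|\epsilon(t)\|_{L^2}\le K\frac{\delta}{K+1}$, while \eqref{scalingpar} also yields $c_1(t)\ge c_1-\frac{\delta}{K+1}$; hence $\rho_1'(t)\ge c_1-\delta\ge\frac{c_1}{2}$ for all large $t$, using that $\delta\le\frac{c_1}{2}$ by definition. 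As $A>3$, we have $\frac{c_1}{2}>\frac{c_1}{A}$, so integrating this differential inequality forces $\rho_1(t)-\frac{c_1}{A}t\to+\infty$. Once this is known, the exponential decay of $Q$ (uniform in $c$ over the compact range of the $c_i(t)$) bounds each $\|Q_{c_i(t)}(\cdot-\rho_i(t))\|_{H^1(x\le\frac{c_1}{A}t)}$ by a quantity of order $e^{-\sqrt{c_i(t)}\,(\rho_i(t)-\frac{c_1}{A}t)}$, which tends to $0$, and this closes the estimate. I expect the delicate step to be precisely the lower bound on $\rho_1'$ and the resulting escape of the leftmost soliton to the right of $x=\frac{c_1}{A}t$, since this is what guarantees that no soliton mass survives in the essentially dispersion-free left region; the remaining pieces are routine exponential-tail integrations.
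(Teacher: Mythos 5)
Your proposal is correct and takes essentially the same route as the paper: the same splitting along the line $x=\frac{c_1}{A}t$, with the right region handled by the asymptotic-stability conclusion \eqref{right} together with the continuity of $c\mapsto Q_c$ and \eqref{scalingparlim}, and the left region handled by the decay given by Proposition \ref{pointwise} for $u$ and by the lower bound $\rho_i'(t)\ge\frac{c_1}{2}$ (deduced from \eqref{scalingpar} and \eqref{control}) which forces the solitons to escape to the right of that line. The only differences are cosmetic: you absorb the replacement of $Q_{c_i(t)}$ by $Q_{c_i^+}$ globally at the outset, and you use the pointwise bound for the $L^2$ part of $u$ on the left where the paper invokes the non-dispersion hypothesis directly.
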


This lemma follows immediately from Claim \ref{rightCV} and Claim \ref{leftCV} below. We begin with this first observation.

\begin{Claim}\label{rightCV}
We have \begin{equation}
\lim_{t\to+\infty}\left\|u(t)-\sum_{i=1}^NQ_{c_i^+}(\cdot-\rho_i(t))\right\|_{H^1\big(x>\frac{c_1}{A}t\big)}=0.
\end{equation}
\end{Claim}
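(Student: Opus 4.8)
The plan is to reduce the claim to the two facts already secured in Step~1, namely the right-region smallness of $\epsilon$ expressed in \eqref{right} and the convergence of the scaling parameters \eqref{scalingparlim}. The only additional input required is the continuity of the map $c\mapsto Q_c$ from $\RR^*_+$ into $H^1(\RR)$, which is immediate from the explicit formula $Q_c(x)=c^{\frac{1}{p-1}}Q(\sqrt{c}\,x)$ (differentiable in $c$, with the derivative lying in $H^1$ thanks to the exponential decay of $Q$ and $Q'$).

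First I would insert the intermediate sum $\sum_i Q_{c_i(t)}(\cdot-\rho_i(t))$ and apply the triangle inequality to split the quantity to be estimated:
$$\left\|u(t)-\sum_{i=1}^N Q_{c_i^+}(\cdot-\rho_i(t))\right\|_{H^1(x>\frac{c_1}{A}t)}\leq \|\epsilon(t)\|_{H^1(x>\frac{c_1}{A}t)}+\sum_{i=1}^N\left\|Q_{c_i(t)}(\cdot-\rho_i(t))-Q_{c_i^+}(\cdot-\rho_i(t))\right\|_{H^1(x>\frac{c_1}{A}t)}.$$
The first term tends to $0$ as $t\to+\infty$ directly by \eqref{right}.

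For each term in the remaining finite sum, I would bound the restricted $H^1$-norm on $\{x>\frac{c_1}{A}t\}$ by the full $H^1(\RR)$-norm, and then use the translation invariance of $H^1(\RR)$ to discard the common shift $\rho_i(t)$:
$$\left\|Q_{c_i(t)}(\cdot-\rho_i(t))-Q_{c_i^+}(\cdot-\rho_i(t))\right\|_{H^1(x>\frac{c_1}{A}t)}\leq \left\|Q_{c_i(t)}-Q_{c_i^+}\right\|_{H^1(\RR)}.$$
Since $c_i(t)\to c_i^+$ by \eqref{scalingparlim} and $c\mapsto Q_c$ is continuous into $H^1(\RR)$, each such term tends to $0$. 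Summing over $i\in\{1,\dots,N\}$ and combining with the bound on $\epsilon$ gives the claim.

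There is no genuine obstacle here: the statement is essentially a corollary of Step~1, and the work reduces to the elementary continuity of $c\mapsto Q_c$. The one point worth stating carefully is that the moving region $\{x>\frac{c_1}{A}t\}$ is irrelevant for the scaling-difference terms—these are controlled in the full space, where the shifts $\rho_i(t)$ drop out—so the restriction only matters for $\epsilon$, which is precisely what \eqref{right} is designed to handle.
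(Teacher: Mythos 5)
Your proposal is correct and follows essentially the same route as the paper: triangle inequality via the intermediate sum $\sum_i Q_{c_i(t)}(\cdot-\rho_i(t))$, the term $\epsilon(t)$ handled by \eqref{right}, and each scaling-difference term bounded by the full $H^1(\RR)$-norm (where the shift $\rho_i(t)$ drops out) together with the continuity of $c\mapsto Q_c$ and \eqref{scalingparlim}. The only cosmetic difference is the justification of that continuity (the paper invokes dominated convergence, you invoke the explicit formula), which is immaterial.
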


Let us justify this fact. \\
Using the triangular inequality and taking into account (\ref{right}), it suffices in fact to see that for all $i\in\{1,\ldots,N\}$, \begin{equation}\label{limQcj}
\lim_{t\to+\infty}\big\|Q_{c_i(t)}(\cdot-\rho_i(t))-Q_{c_i^+}(\cdot-\rho_i(t))\big\|_{H^1\big(x>\frac{c_1}{A}t\big)}=0.
\end{equation}
But the quantity $\big\|Q_{c_i(t)}(\cdot-\rho_i(t))-Q_{c_i^+}(\cdot-\rho_i(t))\big\|_{H^1\big(x>\frac{c_1}{A}t\big)} $ is bounded by $\|Q_{c_i(t)}-Q_{c_i^+}\|_{H^1}$ which tends to 0 as $t$ tends to $+\infty$. We recall indeed that the map 
\[ 
\RR^*_+ \rightarrow H^1(\RR), \quad
c \mapsto Q_c \] 
is continuous by application of Lebesgue's dominated convergence theorem. Hence (\ref{limQcj}) holds and Claim \ref{rightCV} is proved. \\

Due to the assumption of non-dispersion made in Theorem \ref{rigidite}, we claim moreover: 

\begin{Claim}\label{leftCV}
We have \begin{equation}
\lim_{t\to+\infty}\left\|u(t)-\sum_{i=1}^NQ_{c_i^+}(\cdot-\rho_i(t))\right\|_{H^1\big(x\leq\frac{c_1}{A}t\big)}=0.
\end{equation}
\end{Claim}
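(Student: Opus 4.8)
The plan is to split via the triangle inequality and estimate $u$ and each translated soliton $Q_{c_i^+}(\cdot-\rho_i(t))$ separately on the left region $\{x\le\frac{c_1}{A}t\}$. It thus suffices to prove the two facts: (i) $\|u(t)\|_{H^1(x\le\frac{c_1}{A}t)}\to 0$, and (ii) for each $i\in\{1,\dots,N\}$, $\|Q_{c_i^+}(\cdot-\rho_i(t))\|_{H^1(x\le\frac{c_1}{A}t)}\to 0$, both as $t\to+\infty$.

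For (i), I would invoke Proposition \ref{pointwise}. The solution $u\in\mathscr{C}([0,+\infty),H^1)$ is uniformly bounded in $H^1$ thanks to \eqref{Nsolproche} (the $Q_{c_i}$ being bounded in $H^1$), and it satisfies the non dispersion hypothesis \eqref{L2compactnessprop}; hence Proposition \ref{pointwise} furnishes $\kappa>0$ and constants $K_0,K_1$ with $|\partial_x^ku(t,x)|\le K_ke^{-\frac{\kappa}{2}|x-\rho t|}$ for $x<\rho t$ and $k\in\{0,1\}$. Since \eqref{choix} gives $\frac{c_1}{A}<\rho$, the gap $\mu:=\rho-\frac{c_1}{A}>0$ guarantees that for $t$ large every $x\le\frac{c_1}{A}t$ lies in $\{x<\rho t\}$ with $\rho t-x\ge\mu t$. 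Integrating the pointwise bounds then yields $\|u(t)\|_{H^1(x\le\frac{c_1}{A}t)}^2\le\frac{K_0^2+K_1^2}{\kappa}e^{-\kappa\mu t}\to 0$, which proves (i) with an exponential rate.

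For (ii), since $Q_c$ and its derivative decay exponentially at $\pm\infty$, it is enough to show that each center escapes the threshold, that is $d_i(t):=\rho_i(t)-\frac{c_1}{A}t\to+\infty$; the change of variable $y=x-\rho_i(t)$ then turns the norm into the vanishing tail $\int_{y\le-d_i(t)}\big(Q_{c_i^+}^2+(Q_{c_i^+}')^2\big)\,dy$. By the ordering \eqref{translpar} it suffices to treat $i=1$. Here I would combine the modulation estimates: from \eqref{control} (bounding $e^{-\sqrt{c_1}|x-\rho_1|}\le 1$) and \eqref{scalingpar}, for $t$ large,
\[
\rho_1'(t)\ge c_1(t)-K\|\epsilon(t)\|_{L^2}\ge c_1-\frac{\delta}{K+1}-K\frac{\delta}{K+1}=c_1-\delta\ge\frac{c_1}{2},
\]
where the last inequality uses $\delta\le\frac{c_1}{2}$ by definition of $\delta$. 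Integrating gives $\rho_1(t)\ge\frac{c_1}{2}t-C_0$ for $t$ large, so $d_1(t)\ge\big(\frac{c_1}{2}-\frac{c_1}{A}\big)t-C_0\to+\infty$ since $A>3>2$. This establishes (ii).

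The main obstacle is point (ii), and more precisely the quantitative lower bound $\rho_1'(t)\ge\frac{c_1}{2}$: one must verify that the error terms from the modulation (the right-hand side of \eqref{control} together with the deviation of $c_1(t)$ from $c_1$) are dominated by $\delta$, and that the threshold speed $\frac{c_1}{A}$ is genuinely slower than the slowest soliton. This is exactly what the choice $A>3$ secures, leaving comfortable room between $\frac{c_1}{A}$ and the guaranteed speed $\frac{c_1}{2}$ of the first center. Combining (i) and (ii) through the triangle inequality then yields Claim \ref{leftCV}.
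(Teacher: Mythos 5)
Your proposal is correct and follows essentially the same route as the paper: both parts hinge on Proposition \ref{pointwise} together with the choice \eqref{choix} for the left region, and on the modulation estimates \eqref{scalingpar}, \eqref{control} giving $\rho_i'(t)\ge\frac{c_1}{2}$ (hence linear escape of the centers past $\frac{c_1}{A}t$, using $A>3$) for the soliton tails. The only cosmetic difference is that the paper controls the $L^2$ part of $u$ directly from the non-dispersion hypothesis and uses the pointwise decay only for $u_x$, whereas you integrate the pointwise bounds for both, which yields the same conclusion with an exponential rate.
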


In what follows, we prove actually that each quantity $\|u(t)\|_{H^1\big(x\leq\frac{c_1}{A}t\big)}$ and \newline \noindent $\|Q_{c_i^+}(\cdot-\rho_i(t))\|_{H^1\big(x\leq\frac{c_1}{A}t\big)}$ for $i\in\{1,\ldots,N\}$ tends to $0$ as $t$ tends to $+\infty$. \\

\begin{enumerate}
\item Proof of $\|u(t)\|_{H^1\big(x\leq\frac{c_1}{A}t\big)}\underset{t\to+\infty}{\longrightarrow}0$. \\
\noindent Let $\varepsilon>0$. By (\ref{L2compactnessprop}) or (\ref{L2compactnessprop'}), there exists $R_\varepsilon>0$ such that for all $R\geq R_\varepsilon$,
$$\forall\;t\ge 0,\qquad \int_{x<(\rho-\sigma)t-R}u^2(t,x)\;dx\leq\frac{\varepsilon}{2}.$$
Now, by means of Proposition \ref{pointwise}, there exist $\kappa>0$ and $K_1>0$ such that for all $t\geq 0$, 
$$\forall\;x\leq (\rho-\sigma)t,\qquad |u_x(t,x)|\leq K_1e^{-\kappa|x-(\rho-\sigma)t|}.$$
Pick $R\geq R_\epsilon$ such that $\displaystyle K_1^2e^{-\kappa R}\int_\RR e^{-\kappa|x|}\;dx\leq \frac{\varepsilon}{2}$. \\
For $t$ large enough, $\frac{c_1}{A}t<(\rho-\sigma) t-R$ due to (\ref{choix}), and therefore
$$\int_{x\leq\frac{c_1}{A}t}u^2(t,x)\;dx\leq\frac{\varepsilon}{2},$$
and 
\begin{align*}
\displaystyle\int_{x\leq\frac{c_1}{A}t}u_x^2(t,x)\;dx&\leq\displaystyle\int_{x\leq (\rho-\sigma) t-R}u_x^2(t,x)\;dx \leq\displaystyle K_1\int_{x\leq (\rho-\sigma)t-R}e^{-\kappa R}|u_x(t,x)|\;dx\\
&\leq\displaystyle K_1^2e^{-\kappa R}\int_{x\leq (\rho-\sigma)t-R}e^{-\kappa|x-(\rho-\sigma)t|}\;dx \\
& \leq\displaystyle K_1^2e^{-\kappa R}\int_\RR e^{-\kappa|x-(\rho-\sigma) t|}\;dx \leq\frac{\varepsilon}{2}.
\end{align*}

As a consequence, for $t$ large enough, $\|u(t)\|_{H^1\big(x\leq\frac{c_1}{A}t\big)}^2\leq\varepsilon.$ 

\item Proof of $\|Q_{c_i^+}(\cdot-\rho_i(t))\|_{H^1\big(x\leq\frac{c_1}{A}t\big)}\underset{t\to+\infty}{\longrightarrow}0$.
\\
\noindent Notice first that, recalling (\ref{scalingpar}) and (\ref{control}), we have for $t$ large enough  
\begin{align*}
|\rho_i'(t)-c_i|&\leq\displaystyle |\rho_i'(t)-c_i(t)|+|c_i(t)-c_i| \leq K\|\epsilon(t)\|_{L^2}+|c_i(t)-c_i|\\
&\leq(K+1)\big(\|\epsilon(t)\|_{L^2}+|c_i(t)-c_i|\big) \leq\frac{c_1}{2}.
\end{align*}
\noindent In particular, for $t$ large enough, 
\begin{equation}\label{ineq_rho_i_der}
\rho_i'(t)\geq \frac{c_1}{2}.
\end{equation}
By integration of the preceding inequality, we deduce that for large values of $t$, $\rho_i(t)\geq \frac{c_1}{3}t$. Thus, for these values,
\begin{equation}\label{param}
\rho_i(t)-\frac{c_1}{A}t\geq c_1\left(\frac{1}{3}-\frac{1}{A}\right)t,\end{equation}
with $\displaystyle\frac{1}{3}-\frac{1}{A}>0$. \\
\noindent Due to the exponential decay property of the integrable functions $Q_{c_i^+}$ and $Q'_{c_i^+}$, we deduce then from (\ref{param}) that 
$\|Q_{c_i^+}(\cdot-\rho_i(t))\|_{H^1\big(x\leq\frac{c_1}{A}t\big)}\underset{t\to+\infty}{\longrightarrow}0$.
\end{enumerate}

Now, it follows from Claim \ref{rightCV} and Claim \ref{leftCV} that
\begin{equation}\label{CV_zero}
\lim_{t\to+\infty}\left\|u(t)-\sum_{i=1}^NQ_{c_i^+}(\cdot-\rho_i(t))\right\|_{H^1}=0.
\end{equation} 

\noindent\textit{Step 3: Refinement of \eqref{CV_zero}}

\begin{proposition}[Improvement of the $H^1$-convergence for asymptotic $N$-soliton like solutions]\label{improvement}
Let $u\in\mathscr{C}(\RR,H^1(\RR))$ be a solution of (gKdV) and let $0<c_1<\cdots<c_N$. Assume the existence of $T_0\ge 0$, $\delta_0>0$, and N functions $x_1,\ldots,x_N:\RR\rightarrow\RR$ of class $\mathscr{C}^1$ satisfying for all $t\ge T_0$,
\begin{equation}\label{mindelta}
\forall\;i=1,\dots,N-1,\quad 
x_{i+1}(t)-x_{i}(t)\geq \delta_0 t \qquad \text{and}\qquad \forall\;i=1,\dots,N,\quad x_i'(t)\ge\delta_0,
\end{equation}
and such that \begin{equation}\label{lim0}
\lim_{t\to+\infty}\left\|u(t)-\sum_{i=1}^NQ_{c_i}(\cdot-x_i(t))\right\|_{H^1}=0.
\end{equation}
Then there exist $C>0$ and $y_1,\ldots,y_N\in\RR$ such that 
\begin{equation}\label{limexp}
\forall\;t\geq 0,\qquad \left\|u(t)-\sum_{i=1}^NQ_{c_i}(\cdot-c_it-y_i)\right\|_{H^1}\leq Ce^{-\frac{1}{8}\delta_0^{\frac{3}{2}} t}.
\end{equation}
\end{proposition}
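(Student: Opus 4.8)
The plan is to upgrade the qualitative convergence \eqref{lim0} to the exponential rate \eqref{limexp} by a bootstrap/monotonicity argument exploiting the separation hypotheses \eqref{mindelta}. First I would set up a refined modulation: writing $u(t)=\sum_{i=1}^N Q_{c_i}(\cdot-\tilde x_i(t))+\eta(t)$ with modulated translation (and possibly scaling) parameters $\tilde x_i$ chosen to impose the standard orthogonality conditions $\langle \eta(t), \partial_x Q_{c_i}(\cdot-\tilde x_i(t))\rangle = 0$ (and analogous conditions if scaling is modulated), I would derive from \eqref{lim0} that $\|\eta(t)\|_{H^1}\to 0$ and that the modulation parameters satisfy $|\tilde x_i'(t)-c_i|\lesssim \|\eta(t)\|_{H^1}$ together with $\tilde x_{i+1}(t)-\tilde x_i(t)\geq \tfrac{\delta_0}{2}t$ and $\tilde x_i'(t)\geq \tfrac{\delta_0}{2}$ for $t$ large, inherited from \eqref{mindelta}.

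The heart of the argument is an energy functional adapted to the multi-soliton, controlling $\|\eta(t)\|_{H^1}^2$ from above by localized mass quantities. The key step is to introduce cutoffs $\psi_i$ separating the solitons (transitioning near the midpoints $\tfrac{1}{2}(\tilde x_i+\tilde x_{i+1})$, which are separated by $\gtrsim \delta_0 t$ thanks to \eqref{mindelta}) and to run the monotonicity formula of Martel–Merle type on the localized masses $\int u^2\psi_i$. Because the spatial gaps grow linearly like $\delta_0 t$ and the solitons travel with distinct speeds, the interaction and boundary error terms in the monotonicity computation are exponentially small, of size $e^{-c\,\delta_0 t}$; I would track the constants carefully to extract the precise rate $\tfrac{1}{8}\delta_0^{3/2}$ (the power $\delta_0^{3/2}$ reflecting that the exponential weight scale $\sqrt{c_1}\sim\sqrt{\delta_0}$ multiplies a spatial separation $\sim\delta_0 t$, giving a decay exponent $\sim \delta_0^{3/2}t$). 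Combining the coercivity of the linearized energy (valid under the orthogonality conditions, using that $Q_{c_i}$ are stable profiles in the subcritical regime and the spectral gap of the linearized operator) with these exponentially small error terms yields a differential inequality of the form $\tfrac{d}{dt}\mathcal{E}(t)\gtrsim -e^{-c\delta_0 t}$ and a lower bound $\mathcal{E}(t)\gtrsim \|\eta(t)\|_{H^1}^2 - e^{-c\delta_0 t}$, whence $\|\eta(t)\|_{H^1}\lesssim e^{-\frac{1}{8}\delta_0^{3/2}t}$.

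Once the exponential decay of $\|\eta(t)\|_{H^1}$ is in hand, I would integrate the modulation equations: from $|\tilde x_i'(t)-c_i|\lesssim \|\eta(t)\|_{H^1}\lesssim e^{-\frac{1}{8}\delta_0^{3/2}t}$, the parameters $\tilde x_i(t)-c_i t$ converge to limits $y_i\in\RR$ at an exponential rate, and similarly the scaling parameters (if modulated) converge to $c_i$ exponentially fast. Replacing the modulated profiles $Q_{c_i}(\cdot-\tilde x_i(t))$ by the fixed-velocity profiles $Q_{c_i}(\cdot-c_it-y_i)$ costs only $\sum_i |\tilde x_i(t)-c_it-y_i|\lesssim e^{-\frac{1}{8}\delta_0^{3/2}t}$ in $H^1$, using that $c\mapsto Q_c$ and translation are smooth in $H^1$. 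This gives \eqref{limexp} for $t$ large, and adjusting the constant $C$ absorbs the finitely many remaining times $t\in[0,T_0']$.

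The main obstacle I anticipate is the precise bookkeeping in the monotonicity argument: obtaining coercivity of the localized energy functional uniformly in time requires the orthogonality conditions to remain valid and the spectral gap estimate to survive localization, while simultaneously ensuring every error term is genuinely of order $e^{-c\delta_0 t}$ with the sharp constant $\tfrac{1}{8}\delta_0^{3/2}$. Controlling the interaction between distinct solitons and the cross terms from the cutoffs — so that they do not degrade the rate — is the delicate part, and is where the linear growth of the separations in \eqref{mindelta} must be used quantitatively rather than merely qualitatively.
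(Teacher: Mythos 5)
Your proposal follows the same architecture as the paper's proof (which itself follows Martel's uniqueness argument): modulation with orthogonality to $\partial_xQ_{c_i}$, localized Martel--Merle monotonicity with cutoffs at the midpoints, a Weinstein-type coercive functional, and integration of the modulation equations. However, the central step, as you wrote it, does not close. From an almost-increasing functional ($\tfrac{d}{dt}\mathcal{E}\gtrsim -e^{-c\delta_0 t}$) together with a coercive \emph{lower} bound ($\mathcal{E}\gtrsim\|\eta\|_{H^1}^2-e^{-c\delta_0 t}$) one cannot deduce any decay of $\|\eta\|_{H^1}$: the constant functional $\mathcal{E}\equiv 1$ satisfies both. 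What is needed is a decaying \emph{upper} bound on $\mathcal{E}(t)$, and in this scheme it can only come from the future: integrate the almost-monotonicity from $t$ to $t'$, let $t'\to+\infty$, and use \eqref{lim0} to evaluate the limit. This forces a further piece of bookkeeping that is absent from your sketch: the quantities to which the Kato-type monotonicity identities apply are the localized mass and localized energy of the full solution $u$ (they are \emph{not} almost-monotone for the quadratic form in $\eta$, whose time derivative contains uncontrolled linear terms), whereas coercivity concerns the quadratic form in $\eta$. Relating the two requires expanding the localized quantities at time $t'$ around the solitons, so that the $O(1)$ soliton constants cancel against those subtracted from the coercive functional and the linear-in-$\eta$ contributions vanish thanks to the elliptic equation $Q_{c_i}''-c_iQ_{c_i}+Q_{c_i}^p=0$ --- which is why one must take the specific combination (localized energy)$\,+\,c_i\times$(localized mass) and assemble the pieces by Abel summation. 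This matching of constants and coefficients is the heart of the proof (Lemma \ref{lem:prop_step3} in the paper) and is exactly what makes your ``whence'' true.

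A second gap concerns the rate. The exponential weight in the monotonicity argument cannot be taken larger than $\sim\sqrt{\nu}$ with $\nu=\min\{c_1,\delta_0\}$ (the soliton tails and the coercivity force the weight to be compatible with the slowest soliton), so the argument naturally produces the rate $\tfrac18\nu^{3/2}$, not $\tfrac18\delta_0^{3/2}$. Your assertion $\sqrt{c_1}\sim\sqrt{\delta_0}$ is not a hypothesis of the statement and needs proof: one must show a posteriori that $\delta_0\le c_1$. The paper does this at the very end: once exponential convergence with rate governed by $\nu$ is established, one deduces $x_1(t)-c_1t-y_1\to 0$, and then the assumption $x_1'(t)\ge\delta_0$ from \eqref{mindelta} forces $c_1\ge\delta_0$, hence $\nu=\delta_0$. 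A smaller omission of the same kind: with only the orthogonality conditions $\int\eta\,\partial_xQ_{c_i}(\cdot-\tilde x_i(t))\,dx=0$, coercivity holds only up to the terms $\bigl(\int\eta\,Q_{c_i}(\cdot-\tilde x_i(t))\,dx\bigr)^2$, whose separate (exponentially small) control is also required.
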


It was first observed by Martel \cite[Proposition 4]{martel} that multi-solitons in the sense of Definition \ref{def:multi_sol} do actually converge exponentially fast to their profile: this was a key to proving uniqueness of multi-solitons, in the $L^2$-subcritical case. In the above Proposition \ref{improvement} we further refine this observation, by noticing that the conclusion still holds even if one gives some freedom to the center of mass of the soliton $x_i(t)$ (instead of \eqref{mindelta}, the assumption in \cite[Proposition 4]{martel} was $x_i(t) = c_i t + y_i$).

The proof of Proposition \ref{improvement} follows the lines of \cite{martel} and is postponed to the Appendix; we go on assuming it holds. \\

Given \eqref{translpar}, \eqref{ineq_rho_i_der}, and \eqref{CV_zero}, we just have to apply the previous proposition (with $x_i$ replaced by $\rho_i$ and $\delta_0$ by $\delta$ defined in Step 1) to conclude that $u$ is a multi-soliton. In other words, there exist $x_1^+,\dots,x_N^+\in\RR$ such that 
\begin{equation}
\forall\;t\geq 0,\qquad \left\|u(t)-\sum_{i=1}^NQ_{c_i^+}(\cdot-c_i^+t-x_i^+)\right\|_{H^1}\leq Ce^{-\frac{1}{8}\delta^{\frac{3}{2}} t}.
\end{equation}
We recall then from Martel \cite[proof of Proposition 5]{martel} that for all $s\in\NN^*$, $u\in\mathscr{C}([0,+\infty),H^s(\RR))$ and there exists $\tilde{C}_s\geq 0$ such that
\begin{equation}\label{strong_CV}
\forall\;t\geq 0,\qquad\left\|u(t)-\sum_{i=1}^NQ_{c_i^+}(\cdot-c_i^+t-x_i^+)\right\|_{H^s}\leq \tilde{C}_se^{-\frac{1}{32}\delta^{\frac{3}{2}} t}.
\end{equation} 
This concludes the proof of Theorem \ref{rigidite}. \\

\noindent \textit{Step 4: Proof of smoothness and exponential decay of $u$} \\

\noindent Apply Proposition \ref{pointwise2} with $a(t):=(\rho-\sigma)t$ and $b(t):=(\rho+\sigma)t$ to obtain $u\in\mathscr{C}^\infty([0,+\infty)\times\RR)$ and for each $t\geq 0$,
$$
\forall\;x\leq (\rho-\sigma)t, \qquad\big|\partial^s_xu(t,x)\big|\leq K_se^{-\gamma|x-(\rho-\sigma)t|},
$$
where $K_s,\gamma>0$ are independent of $t$ and $x$. \\
Under the global non-dispersion assumption of Corollary \ref{rigidite'}, which we take as granted from now on, we have also
$$\forall\;x\geq (\rho+\sigma)t,\qquad \big|\partial^s_xu(t,x)\big|\leq K_se^{-\gamma(x-(\rho+\sigma)t)}.$$ (See Remark \ref{est_left}.)

At this stage and as we explain just below, the desired exponential decay estimate (\ref{expdecay}) follows from the strong property (\ref{CVexp}) or \eqref{strong_CV}. We distinguish three cases, depending on the position of $x$ with respect to $\pm 2(\rho+\sigma)t$; the moral being that (\ref{CVexp}) implies the expected pointwise estimate in each region $|x|\leq\zeta t$ (with an exponential decay rate depending on $\zeta$) and even if it means taking $\zeta$ large enough and reducing the decay rate $\gamma$, one can propagate the control by $e^{-\gamma|x-c_N^+t|}$ (respectively $e^{-\gamma|x-c_1^+t|}$) to the region $x>(\rho+\sigma)t$ (respectively $x<(\rho-\sigma)t$). \\

\noindent Let $t\geq 0$ and $\tilde{\theta}:=\frac{1}{32}\delta^{\frac{3}{2}}$, for each $s\in\NN$, there exists $\tilde{K}_s>0$ such that $$\left\|\partial^s_x\left(u(t)-\sum_{i=1}^NQ_{c_{i}^+}(\cdot-c_i^+t-x_i^+)\right)\right\|_{L^\infty}\leq \tilde{K}_se^{-\tilde{\theta} t}.$$

\noindent \textit{Case 1:} $|x|\leq 2(\rho+\sigma)t$. We have
\[ |x-c_N^+t|\leq (2(\rho+\sigma)+c_N^+)t \quad \text{that is,} \quad \frac{\tilde{\theta}}{2(\rho+\sigma)+c_N^+}|x-c_N^+t|\leq \tilde{\theta} t, \]
 and thus 
\begin{equation}
\left\|\partial^s_x\left(u(t)-\sum_{i=1}^NQ_{c_{i}^+}(\cdot-c_i^+t-x_i^+)\right)\right\|_{L^\infty}\leq \tilde{K_s}e^{-\frac{\tilde{\theta}}{2(\rho+\sigma)+c_N^+}|x-c_N^+t|}.
\end{equation}
Consequently for $t\geq 0$ and $|x|\leq 2(\rho+\sigma)t$, using the triangular inequality and the exponential decay of $\partial_x^sQ_{c_i^+}$, we obtain
\begin{align}
\displaystyle|\partial_x^su(t,x)|&\leq\displaystyle\sum_{i=1}^N\left|\partial_x^sQ_{c_i^+}(x-c_i^+t-x_i^+)\right|+\displaystyle\left\|\partial^s_x\left(u(t)-\sum_{i=1}^NQ_{c_{i}^+}(\cdot-c_i^+t-x_i^+)\right)\right\|_{L^\infty}\notag\\
&\leq\displaystyle \tilde{\tilde{K_s}}\sum_{i=1}^Ne^{-\tilde{\gamma}|x-c_i^+t|},
\end{align}

where $\tilde{\gamma}:=\min\Big\{\sqrt{c_1^+},\frac{\tilde{\theta}}{2(\rho+\sigma)+c_N^+}\Big\}$. \\

\noindent \textit{Case 2:} $x\geq 2(\rho+\sigma) t$. Let us rewrite this as $x-(\rho+\sigma)t\geq \frac{1}{2}x.$ In particular 
\[ x-(\rho+\sigma)t\geq \frac{1}{2}\left(x-c_N^+t\right) \]
so that for $x\geq 2(\rho+\sigma)t$,
\begin{align}
\big|\partial^s_xu(t,x)\big|&\leq\displaystyle K_se^{-\gamma(x-(\rho+\sigma)t)} \leq K_se^{-\frac{\gamma}{2}(x-c_N^+t)}.
\end{align}

\noindent \textit{Case 3:} $x\leq -2(\rho+\sigma)t$. Arguing similarly as before, we have then
\begin{align}
\big|\partial^s_xu(t,x)\big|&\leq\displaystyle K_se^{-\gamma((\rho-\sigma)t-x)} \leq\displaystyle K_se^{-\frac{\gamma}{L}(c_1^+t-x)},
\end{align}
for $L>2$ chosen such that $\frac{L(\rho-\sigma)-c_1^+}{L-1}>-2(\rho+\sigma)$. 

\noindent Set finally $\theta:=\min\left\{\frac{\gamma}{L},\tilde{\gamma}\right\}$ to obtain (\ref{expdecay}) in Corollary \ref{rigidite'}.

\section{The integrable cases: proofs of Theorems {\normalfont\textbf{\ref{kdv}}} and {\normalfont\textbf{\ref{mkdv}}}}

\subsection{Non dispersive solutions of the Korteweg-de Vries equation}\label{proof_th_kdv}

The strategy to prove Theorem \ref{kdv} takes inspiration in \cite[Proof of Theorem 2]{laurent}. We use the following result of Eckhaus and Schuur \cite[Section 5]{eckhaus}, which is also a consequence of a generalized version by Schuur \cite[Chapter 2, Theorem 7.1 and (7.23)]{schuur}.

\begin{theorem}[Eckhaus and Schuur \cite{eckhaus}; Schuur \cite{schuur}]\label{dec_schuur}
Let $p=2$ and $u_0\in\mathscr{C}^4(\RR)$ be such that for some $C_0>0$, for all $k=0,\dots,4$, and for all $x\in\RR$,
\begin{equation}\label{dec_exp}
\left|\frac{\partial^ku_0(x)}{\partial x^k}\right|\leq C_0|x|^{-11}.
\end{equation} Let $u$ be the corresponding global solution of (KdV).

 Then there exists a solution $u_d$ which is a multi-soliton or zero such that for all $\beta>0$, there exist $\gamma=\gamma(\beta)>0$ and $K\geq 0$ such that for all $t>0$
\begin{equation}\label{prop_asympt}
\|u(t)-u_d(t)\|_{L^\infty(x>\beta t)}+\|u(t)-u_d(t)\|_{L^2(x>\beta t)}\leq Kt^{-\frac{1}{3}}.
\end{equation}
\end{theorem}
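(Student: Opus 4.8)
The plan is to prove this via the inverse scattering transform (IST) for (KdV), which is available exactly because $p=2$ is integrable, and for which the polynomial decay hypothesis \eqref{dec_exp} is precisely the regularity needed to make the whole apparatus rigorous. First I would attach to the initial datum the Schrödinger operator $-\partial_x^2+u_0$ and read off its scattering data: finitely many negative eigenvalues $-\kappa_1^2<\dots<-\kappa_N^2<0$, associated norming constants $c_n$, and a reflection coefficient $r(k)$, $k\in\RR$. The decay assumption guarantees that the discrete spectrum is finite and that $r$ is smooth and decaying enough for the oscillatory integrals below to converge and for the van der Corput estimates to apply. The time evolution of this data under the (KdV) flow is explicit (in the normalization fixed by the nonlinearity): the $\kappa_n$ are constant, $c_n(t)^2=c_n^2\,e^{8\kappa_n^3t}$, and $r(k,t)=r(k)\,e^{8ik^3t}$.

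Next I would reconstruct both $u$ and $u_d$ from the Gelfand--Levitan--Marchenko (GLM) equation. Define $u_d$ to be the reflectionless potential carrying the discrete data $(\kappa_n,c_n(t))$; this is exactly a pure $N$-soliton moving to the right with positive speeds governed by the $\kappa_n$ (and $u_d\equiv 0$ if $N=0$), so it qualifies as the announced multi-soliton-or-zero. Both potentials are recovered through the kernel $K$ solving
\[
K(x,y)+B(x+y;t)+\int_x^\infty K(x,z)\,B(z+y;t)\,dz=0,\qquad y>x,
\]
with $u=-2\frac{d}{dx}K(x,x;t)$, where the GLM data splits as $B=B_d+B_r$ into the discrete (soliton) sum $B_d(\xi;t)=\sum_{n=1}^N c_n(t)^2e^{-\kappa_n\xi}$ and the reflection part $B_r(\xi;t)=\frac{1}{2\pi}\int_\RR r(k)\,e^{i(8k^3t+k\xi)}\,dk$. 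Since $u_d$ is reconstructed from $B_d$ alone, the whole difference $u-u_d$ is driven by $B_r$.

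The quantitative rate comes from an oscillatory-integral estimate on $B_r$. The phase $\phi(k)=8k^3t+k\xi$ satisfies $\phi'''(k)=48\,t$, so van der Corput's lemma yields $|B_r(\xi;t)|\le C\,t^{-1/3}$ uniformly in $\xi$, together with genuine spatial decay in $\xi$ inherited from the smoothness and decay of $r$. This is the origin of the $t^{-1/3}$ rate, and it is robust precisely because the cubic-phase bound does not depend on the location of any stationary point. Restricting to the region $x>\beta t$ is what turns this into a smallness statement: in the GLM integration one has $y>x$, hence $\xi=x+y>2\beta t$, which keeps us strictly to the right of the dispersive tail --- the linearized group velocity $-3k^2\le 0$ drives the radiation to the left --- so there the soliton data dominate and $B_r$ is a genuine perturbation, with the balance $c_n(t)^2e^{-\kappa_n\xi}=O(1)$ holding along each soliton.

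The main obstacle is to propagate the pointwise bound on $B_r$ through the GLM equation to control $u-u_d$ uniformly for all $x>\beta t$ and all $t>0$. Concretely, I would solve the two Marchenko equations (for $B$ and for $B_d$), subtract them, and estimate $K-K_d$ by a Neumann-series/resolvent bound for $(I+\mathcal{B})^{-1}$; the delicate point is keeping the operator norm uniformly bounded despite the exponential growth $c_n(t)^2\sim e^{8\kappa_n^3t}$, which is absorbed against $e^{-\kappa_n\xi}$ using $\xi>2\beta t$ and the ordering of the speeds. Granting this uniform invertibility, $\|K-K_d\|$ inherits the $t^{-1/3}$ decay of $B_r$; differentiating once in $x$ costs nothing in the rate and gives the $L^\infty(x>\beta t)$ bound, while integrating the combined temporal $t^{-1/3}$ factor against the spatial decay of the reflection contribution over $x>\beta t$ yields the $L^2(x>\beta t)$ bound, completing \eqref{prop_asympt}.
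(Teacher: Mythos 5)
This theorem is not proved in the paper at all: it is imported verbatim from Eckhaus--Schuur and from Schuur's book (Chapter 2, Theorem 7.1 and (7.23)), so the relevant comparison is with those references. At the level of architecture your sketch does reconstruct their argument faithfully: Schr\"odinger scattering for the initial datum (modulo the sign/scaling normalization, which for the paper's convention $u_t+\partial_x(u_{xx}+u^2)=0$ leads to $-\partial_x^2-\tfrac{1}{3}u_0$ rather than $-\partial_x^2+u_0$), the splitting $B=B_d+B_r$ of the Gelfand--Levitan--Marchenko kernel, the definition of $u_d$ as the reflectionless potential carrying the discrete data, the $t^{-1/3}$ rate from the cubic phase via van der Corput, and the propagation of the bound on $B_r$ through the Marchenko equation.

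There is, however, a genuine gap precisely at the step you yourself flag as delicate, and the mechanism you propose for it fails. For $x>\beta t$ one has $\xi=x+y>2\beta t$, hence $c_n(t)^2e^{-\kappa_n\xi}\leq c_n^2\,e^{2\kappa_n(4\kappa_n^2-\beta)t}$, which grows exponentially for every soliton with speed $4\kappa_n^2>\beta$ --- and since the theorem must hold for all $\beta>0$ (and is applied in the paper with $\beta=\rho/2$ small), this is the generic situation. Throughout the strip $\beta t<x<(4\kappa_n^2-\epsilon)t$, i.e.\ to the left of the $n$-th soliton but still inside the region under consideration, the discrete kernel entries are exponentially large in $t$; the balance $c_n(t)^2e^{-\kappa_n\xi}=O(1)$ holds only along each soliton trajectory, as you note, so the exponential growth cannot be ``absorbed against $e^{-\kappa_n\xi}$ using $\xi>2\beta t$.'' Consequently $(I+\mathcal{B})^{-1}$ cannot be obtained by a Neumann series, and ``granting this uniform invertibility'' begs the main analytic difficulty. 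The known resolution --- and what Schuur actually does --- is to invert the finite-rank reflectionless Marchenko equation in closed form (Cramer/determinant formulas, $u_d=-2\partial_x^2\log\det\bigl(I+C(x,t)\bigr)$ with a Cauchy-matrix structure forcing $\det\geq 1$), prove that this \emph{discrete} resolvent is uniformly bounded in operator norm despite the large kernel entries, and only then treat $B_r$ as a perturbation of that explicitly inverted operator rather than of the identity. A secondary thin point: the $L^2(x>\beta t)$ half of \eqref{prop_asympt} requires a pointwise estimate with genuine spatial decay (this is the role of Schuur's (7.23)), not merely the uniform $t^{-1/3}$ bound; you gesture at decay of $B_r$ in $\xi$ via nonstationary phase, which is correct in spirit, but that decay too must survive the discrete resolvent, so it again hinges on the step above.
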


\begin{proof}[Proof of Theorem \ref{kdv}]
Due to the assumption on $u_0$ in Theorem \ref{kdv}, we can apply Theorem \ref{dec_schuur} and we obtain a solution $u_d$ of (KdV) as above which fulfills \eqref{prop_asympt}. We claim first that $u_d$ is not the trivial solution. Otherwise, with $\beta:=\frac{\rho}{2}>0$, we would have $\|u(t)\|_{L^2(x>\beta t)}=\mathrm{O}(t^{-\frac{1}{3}})$ as $t$ tends to $+\infty$. On the other hand, by the non dispersion assumption and namely by Proposition \ref{pointwise}, \begin{equation}\label{est_left_L^infty}
\forall\;x\leq\beta t,\qquad\left|u(t,x)\right|\leq Ce^{-\gamma|x-\rho t|},\end{equation}
so that 
\begin{equation}\label{est_left_L^2}
\|u(t)\|_{L^2(x\leq\beta t)}\leq Ce^{-\gamma\beta t}.
\end{equation}
Then, we would obtain that $$\|u(t)\|_{L^2}=\|u(t)\|_{L^2(x\leq\beta t)}+\|u(t)\|_{L^2(x>\beta t)}\to 0\qquad \text{as } t\to +\infty,$$ hence conclude that $\|u_0\|_{L^2}=0$ by the mass conservation law. This contradicts our assumption in Theorem \ref{kdv}. \\
Thus there exist $N\geq 1$, $0<c_1<\dots<c_N$, $x_1^+,\dots,x_N^+\in\RR$, and a possibly smaller $\gamma>0$ such that \begin{equation}\label{CV_H1}
\left\|u_d(t)-\sum_{i=1}^NR_{c_i,x_i^+}(t)\right\|_{H^1}=\mathrm{O}\left(e^{-\gamma t}\right),\quad \text{as } t\to +\infty.
\end{equation}
\begin{Claim}\label{claim_partiel}
We have
$$ \left\|u(t)-\sum_{i=1}^NR_{c_i,x_i^+}(t)\right\|_{L^2}=\mathrm{O}\left(t^{-\frac{1}{3}}\right), \qquad\text{as } t\to +\infty.$$
\end{Claim}

\begin{proof}[Proof of Claim \ref{claim_partiel}]
Consider $\beta\in\left(0,\min\{c_1,\rho\}\right)$ so that \eqref{prop_asympt} is guaranteed for some $\gamma>0$ and so that (by the non dispersion assumption and the sech-shaped profiles of the solitons $R_{c_i,x_i^+}$)
$$\|u(t)\|_{L^2(x\leq \beta t)}+\sum_{i=1}^N\left\|R_{c_i,x_i^+}(t)\right\|_{L^2(x\leq\beta t)}=\mathrm{O}\left(e^{-\gamma t}\right)$$ even if it means reducing $\gamma>0$. We perform then
\begin{align*}
\|u(t)-u_d(t)\|_{L^2}&= \|u(t)-u_d(t)\|_{L^2(x\leq\beta t)}+\|u(t)-u_d(t)\|_{L^2(x>\beta t)}\\
&\leq\|u(t)\|_{L^2(x\leq\beta t)}+\|u_d(t)\|_{L^2(x\leq\beta t)}+\mathrm{O}\left(t^{-\frac{1}{3}}\right)\\
&\leq \|u_d(t)\|_{L^2(x\leq\beta t)}+\mathrm{O}\left(t^{-\frac{1}{3}}+e^{-\gamma t}\right)\\
&\leq \left\|u_d(t)-\sum_{i=1}^NR_{c_i,x_i^+}(t)\right\|_{H^1}+\sum_{i=1}^N\left\|R_{c_i,x_i^+}(t)\right\|_{L^2(x\leq\beta t)}+\mathrm{O}\left(t^{-\frac{1}{3}}\right)\\
&= \mathrm{O}\left(t^{-\frac{1}{3}}\right),
\end{align*} by the embeddings $H^1(\RR)\hookrightarrow H^1(x\leq\beta t)\hookrightarrow L^2(x\leq\beta t) $ and by \eqref{CV_H1}. By means of the triangular inequality and once again \eqref{CV_H1}, we deduce the expected estimate in Claim \ref{claim_partiel}.
\end{proof}

We are now able to finish the proof of Theorem \ref{kdv}. Indeed, let us make the following key observation.
 
\begin{Claim}\label{H2_bounded}
The solution $u$ belongs to $L^\infty([0,+\infty),H^2(\RR))$.
\end{Claim}

\begin{proof}[Proof of Claim \ref{H2_bounded}]
This is an immediate consequence of the following conservation law for the KdV equation 
\begin{equation}\label{cons_law}
\frac{d}{dt}\int_\RR\left\{\left(\partial_x^2u\right)^2-\frac{10}{3}\left(\partial_xu\right)^2u+\frac{5}{9}u^4\right\}(t,x)\;dx=0,
\end{equation}
of the Sobolev embedding $H^1(\RR)\hookrightarrow L^\infty(\RR)$, and from the fact that $u$ belongs to $L^\infty(\RR,H^1(\RR))$.
\end{proof}

As a consequence of Claim \ref{H2_bounded}, $v$ belongs also to $L^\infty([T_1,+\infty),H^2(\RR))$. Then, integrating by parts and using the Cauchy-Schwarz inequality and Claim \ref{claim_partiel}, we infer that
\begin{align*}
\int_\RR\left(\partial_xv\right)^2(t)\;dx&=-\int_\RR v(t)\partial_x^2v(t)\;dx \leq \|v(t)\|_{L^2}\|v(t)\|_{H^2} \leq Ct^{-\frac{1}{3}},
\end{align*}
from which it results that 
\begin{equation}
\left\|u(t)-\sum_{i=1}^NR_{c_i,x_i^+}(t)\right\|_{H^1}\rightarrow 0,\qquad \text{as }t\rightarrow +\infty.
\end{equation}

Hence $u$ is a multi-soliton in $+\infty$. By means of the well-known theory concerning multi-solitons of the KdV equation (see for instance Miura \cite{miura}), we deduce that $u$ is also a multi-soliton in $-\infty$. This ends the proof of Theorem \ref{kdv}. 
\end{proof}

\subsection{Non dispersive solutions of the modified Korteweg-de Vries equation}

Theorem \ref{mkdv} is obtained by using the same strategy as that developed in the previous subsection. Thus we will only sketch its proof. 

As for the KdV case, we apply first the following decomposition result, obtained from \cite[Chapter 5, Theorem 5.1]{schuur} and from \cite[Theorem 1.10]{chen} where a more precise version can be found.

\begin{theorem}[Schuur \cite{schuur}, Chen and Liu \cite{chen}]\label{dec_schuur_mkdv}
Let $p=3$ and $u_0\in\mathscr{C}^4(\RR)$ be such that for some $C_0>0$, for all $k=0,\dots,4$, and for all $x\in\RR$,
\begin{equation}
\left|\frac{\partial^ku_0(x)}{\partial x^k}\right|\leq C_0|x|^{-11},
\end{equation} and be generic as in Theorem \ref{mkdv}, with scattering data \eqref{scattering_set}. 
Let $u$ be the corresponding global solution of (mKdV). 

Then there exist signs $\epsilon_i=\pm 1$, $i=1,\dots, N_1$, and parameters $x_{0,i}$, $i=1,\dots,N_1$, and $x_{1,j}$, $x_{2,j}$, $j=1,\dots,N_2$, such that for all $ v_+>0$ and $v_-<0$, there exists $K\geq 0$ such that for all $t>0$, denoting 
$$P(t):=\sum_{i=1}^{N_1}\epsilon_i R_{2c_i,x_{0,i}}(t)+\sum_{j=1}^{N_2}B_{\sqrt{2}\alpha_j,\sqrt{2}\beta_j,x_{1,j},x_{2,j}}(t),$$ we have
\begin{equation}
\left\|u(t)-P(t)\right\|_{L^\infty(x>v_+ t)}+ \|u(t)-P(t)\|_{L^2(x>v_+t)}\leq Kt^{-\frac{1}{3}},
\end{equation}
and
\begin{equation}\label{estimate_left}
\left\|u(t)-P(t)\right\|_{L^\infty(x<v_- t)}\leq Kt^{-\frac{1}{2}},
\end{equation}
\end{theorem}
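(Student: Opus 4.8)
The plan is to establish this by the inverse scattering transform (IST) for (mKdV), following the classical Gelfand--Levitan--Marchenko route of Schuur \cite{schuur} or, more transparently for the decay rates, the Riemann--Hilbert (RH) / nonlinear steepest descent method of Deift--Zhou as carried out by Chen and Liu \cite{chen}. First I would run the \emph{direct} scattering transform: to $u_0$ one associates, through the Zakharov--Shabat system displayed above, a reflection coefficient $r(\xi)$ for $\xi\in\RR$ together with the discrete spectrum in the upper half-plane. The genericity hypothesis guarantees that this discrete spectrum is finite and simple and coincides with \eqref{scattering_set}: the purely imaginary eigenvalues $i\sqrt{c_i}$ are self-conjugate bound states (since $-\overline{i\sqrt{c_i}}=i\sqrt{c_i}$) producing solitons, while each complex eigenvalue $\alpha_j+i\beta_j$ comes, by the symmetry $\xi\mapsto-\bar\xi$ of the focusing problem, with a partner $-\alpha_j+i\beta_j$, the quadruplet producing a breather. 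The decay assumption $|\partial_x^k u_0|\le C_0|x|^{-11}$ for $k\le 4$ is exactly what places $r$ in a smooth, sufficiently decaying class, so that the later oscillatory-integral estimates converge; this is the content of Remark \ref{rq_hyp_u0}.

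Second, the time evolution of the scattering data under the (mKdV) flow is explicit and linear: the eigenvalues are constant, the reflection coefficient evolves as $r(\xi,t)=r(\xi,0)\,e^{8i\xi^3 t}$ (up to the normalization of the dispersion), and the norming constants are multiplied by the corresponding exponentials. It is precisely these explicitly evolving norming constants that fix, in the reconstruction, the signs $\epsilon_i=\pm1$ and the translation/phase parameters $x_{0,i}$, $x_{1,j}$, $x_{2,j}$, together with the factors relating the AKNS normalization to the soliton profile $R_{2c_i}$ and the breather profile $B_{\sqrt2\alpha_j,\sqrt2\beta_j}$. Reconstructing $u(t,\cdot)$ then amounts to solving the associated RH problem (equivalently, the Marchenko equation) whose jump across $\RR$ is built from $r(\xi,t)$ and whose poles sit at the discrete spectrum.

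Third, I would carry out the asymptotics as $t\to+\infty$ by splitting the solution into its discrete and its radiation contributions. Removing the poles by explicit (Blaschke-type) transformations produces exactly the reflectionless $N_1$-soliton/$N_2$-breather field, which is the announced $P(t)$; what remains is a pure-radiation RH problem governed by the phase $\theta(\xi)=8\xi^3+(x/t)\,\xi$, whose derivative $\theta'(\xi)=24\xi^2+x/t$ vanishes for real $\xi$ precisely when $x<0$. In the left region $x<v_- t$ (with $v_-<0$) there are two real stationary points $\xi=\pm\sqrt{-x/(24t)}$; all rightward-moving coherent structures are exponentially small there, so $u-P$ reduces to the radiation, and the standard stationary-phase (parabolic-cylinder parametrix) estimate yields the dispersive rate $t^{-1/2}$, which is \eqref{estimate_left}. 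In the right region $x>v_+ t$ (with $v_+>0$) the phase has no real stationary point; the poles reassemble into $P(t)$, and controlling the worst case near the coalescence $\xi=0$ (the self-similar, Painlevé-II transition, handled by an Airy parametrix) gives the uniform bound $t^{-1/3}$ on $x>v_+t$, in both $L^\infty$ and $L^2$.

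The bulk of the work, and the main obstacle, is this rigorous nonlinear steepest descent: deforming the RH contour along the steepest-descent paths of $\theta$, installing the correct local parametrices (Airy near the coalescing saddle, yielding $t^{-1/3}$; parabolic cylinder at the separated saddles, yielding $t^{-1/2}$), and bounding the small-norm error, all uniformly down to the boundaries $x=v_\pm t$ while simultaneously accounting for the interaction between the finitely many discrete poles and the continuous spectrum. Equivalently, in Schuur's formulation the obstacle is the careful estimation of the Marchenko kernel and its Neumann iterates in these moving regions. Since this is a known result, I would ultimately invoke \cite[Chapter 5, Theorem 5.1]{schuur} and \cite[Theorem 1.10]{chen}, where this program is completed; the role of the decay and genericity hypotheses is exactly to place $u_0$ in the class for which these references apply (cf. Remark \ref{rq_hyp_u0}).
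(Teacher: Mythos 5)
Your proposal is correct and coincides with the paper's treatment: the paper does not prove Theorem \ref{dec_schuur_mkdv} but quotes it directly from \cite[Chapter 5, Theorem 5.1]{schuur} and \cite[Theorem 1.10]{chen}, which is exactly where your argument ends up, and your inverse-scattering/steepest-descent sketch (direct transform under the genericity and decay hypotheses, explicit evolution of the scattering data fixing the signs and translation parameters, soliton/breather part from the discrete spectrum, $t^{-1/2}$ stationary-phase decay on $x<v_-t$ and $t^{-1/3}$ from the $\xi=0$ transition on $x>v_+t$) is a faithful account of what those references establish. No gap to report.
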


Then, the non dispersion assumption \eqref{L2compactnessprop} in Theorem \ref{mkdv} shows that $N_1+N_2\ge 1$. Since the profiles of the breathers are sech-shaped, due to \eqref{L2compactnessprop} and \eqref{estimate_left}, we deduce that the breathers have positive (envelope) velocities. 

Now, proceeding as in the proof of Claim \ref{claim_partiel}, we obtain in fact that  
$$\|u(t)-P(t)\|_{L^2}=\mathrm{O}\left(t^{-\frac{1}{3}}\right).$$

Moreover, (mKdV) admits conservation laws of orders 2, 3, and 4 in the spirit of \eqref{cons_law}, which shows that $u$ belongs to $L^\infty([0,+\infty),H^{4}(\RR))$. Proceeding similarly to subsection \ref{proof_th_kdv}, we obtain that
$$\left\|u(t)-P(t)\right\|_{H^2}\to 0,\qquad \text{as } t\to +\infty.$$

Finally, by the uniqueness and smoothness results and the estimates in higher Sobolev spaces proven by Semenov \cite{semenov} as far as multi-breathers are concerned, we deduce that $u$ belongs to $\mathscr{C}([0,+\infty),H^s(\RR))$ and that there exist $\gamma>0$ and positive constants $C_s$ such that for all $s\in\NN$, 
$$\left\|u(t)-P(t)\right\|_{H^s}\le C_se^{-\gamma t},\qquad \text{as } t\to +\infty.$$
This finishes proving Theorem \ref{mkdv}.

\section{Appendix: Proof of Proposition \normalfont\textbf{\ref{improvement}}}

The proof follows the same lines as that of Proposition 3 and paragraph 3.2 in Martel \cite{martel} for the $L^2$-subcritical and critical cases, and that of Lemma 4.1 in Combet \cite{combetgkdv} for the supercritical case. For the sake of simplicity and for the reader's convenience, we present here the essential ideas and also the changes in the $L^2$-subcritical case only. 

\begin{Rq}
We mention that in the $L^2$-critical and supercritical cases, the proof is basically changed in terms of the coercivity property we use to control the modulation function $\epsilon$ defined below in Step 1. The monotonicity properties of local mass and energy obtained in Step 2 are still valid in these cases. \\
\noindent Concerning the critical case, the idea is to modulate the scaling parameter in addition to the translation parameter so as to ensure a second orthogonality condition satisfied by $\epsilon$, namely $\int_\RR \epsilon(t)\tilde{R}_j(t)^3\;dx=0$, and then to apply a localized version of the coercivity property available in this case, which leads to:
\begin{equation}
\exists\;\lambda_0>0,\:\forall\;t,\qquad\|\epsilon(t)\|_{H^1}^2\leq \lambda_0\mathcal{H}(t),
\end{equation} with $\mathcal{H}$ defined in Step 3. \\
\noindent In the supercritical case, it is known from Pego and Weinstein \cite{pego} that, considering the standard linearized operator $L$ on $H^1(\RR)$ defined by $Lv:=-\partial_x^2v+v-pQ^{p-1}v$, the composed operator $L\partial_x$ has two eigenfunctions $Z^+$ and $Z^-$ related by $Z^-(x)=Z^+(-x)$, which decay exponentially, and such that $L\partial_xZ^{\pm}=\pm e_0Z^{\pm}$ for some $e_0>0$. In this case, we only have to make modifications in Step 3 by using this time
\begin{equation}
\exists\;\lambda_0>0,\:\forall\;t,\qquad\|\epsilon(t)\|_{H^1}^2\leq \lambda_0\mathcal{H}(t)+\frac{1}{\lambda_0}\sum_{i,\pm}\left(\int_\RR\epsilon(t)\tilde{Z}_i^{\pm}(t)\;dx\right)^2,
\end{equation} where $\displaystyle \tilde{Z}_i^{\pm}(t):=Z_i^{\pm}(\cdot-x_i(t)-y_i(t))$ and $\displaystyle Z_i^{\pm}(x):=c_i^{-\frac{1}{2}}Z^{\pm}\left(c_i^{\frac{1}{2}}x\right)$. \\
(The functions $y_i$ are defined in Lemma \ref{modul} below.)
\noindent The control of $\int_\RR\epsilon(t)\tilde{Z}_i^{\pm}(t)\;dx$ by a function of $t$ which decreases with exponential speed follows the strategy of Combet (for full details, see \cite[paragraph 4.1 Step 4]{combetgkdv}).
\end{Rq}

\noindent \textit{Step 1: Set up of a modulation argument}\\

Set $\nu:=\min\{c_1,\delta_0\}$. We claim the following 

\begin{Lem}\label{modul}
There exist $T\geq 0$ and $\alpha_1\in (0,1]$ such that for all $\tilde{\alpha}\leq\alpha_1$, the following holds. There exist unique $\mathscr{C}^1$ functions $y_i:[T,+\infty)\rightarrow\RR$ such that defining
\begin{equation}\label{def:epsilon}
\epsilon:=u-\sum_{i=1}^N\tilde{R_i},
\end{equation}
where $\tilde{R}_i(t,x):=Q_{c_i}\big(x-x_i(t)-y_i(t)\big)$, we have for all $t\ge T$,
\begin{equation}\label{orthogonality_prop}
\forall\;i\in\{1,\ldots,N\},\quad\int_{\RR}\epsilon\big(\tilde{R_i}\big)_x(t)\;dx=0.
\end{equation}
In addition, there exists $K>0$ such that for all $t\geq T$, for all $i\in\{1,\ldots,N\}$, 
\begin{equation}\label{est1}
\|\epsilon(t)\|_{H^1}+\sum_{i=1}^N|y_i(t)|\leq K\tilde{\alpha},
\end{equation}
\begin{equation}\label{est2}
|x_i'(t)+y_i'(t)-c_i|\leq K\left(\int_\RR\epsilon^2(t)e^{-\sqrt{\nu}|x-x_i(t)|}\;dx\right)^{\frac{1}{2}}+Ke^{-\frac{1}{4}\nu^{\frac{3}{2}}t}.
\end{equation}
\end{Lem}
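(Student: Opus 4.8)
The plan is to construct the modulation parameters $y_i$ by a fixed-time application of the implicit function theorem, and then to obtain the pointwise-in-time bound \eqref{est2} by differentiating the orthogonality conditions \eqref{orthogonality_prop} and inverting a near-diagonal linear system. First I would fix $T$ large enough that, by \eqref{lim0}, $\|u(t)-\sum_j Q_{c_j}(\cdot-x_j(t))\|_{H^1}\le\tilde\alpha$ for $t\ge T$, and that the separation $x_{i+1}(t)-x_i(t)\ge\delta_0 t$ is large. For fixed $t\ge T$, I consider the map
$$\Phi_i(t,y)=\int_\RR\Big(u(t,\cdot)-\sum_{j=1}^N Q_{c_j}(\cdot-x_j(t)-y_j)\Big)(Q_{c_i})_x(\cdot-x_i(t)-y_i)\,dx,\qquad y=(y_1,\dots,y_N),$$
and seek $y(t)$ with $\Phi(t,y(t))=0$. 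At $y=0$ each $\Phi_i$ is $O(\tilde\alpha)$ by Cauchy--Schwarz, and the Jacobian is diagonally dominant since $\partial_{y_i}\Phi_i|_{y=0}=\|(Q_{c_i})_x\|_{L^2}^2+O(\tilde\alpha)$ while the off-diagonal entries $\partial_{y_k}\Phi_i$ $(k\ne i)$ are overlap integrals of two solitons separated by $\ge\delta_0 t$, hence $O(e^{-c\delta_0 t})$. For $\tilde\alpha\le\alpha_1$ small and $T$ large the Jacobian is invertible, so the implicit function theorem yields unique $\mathscr{C}^1$ functions $y_i$ with $|y_i(t)|\le C|\Phi_i(t,0)|\le C\tilde\alpha$; the bound \eqref{est1} on $\|\epsilon\|_{H^1}$ then follows from the triangle inequality and the continuity of $c\mapsto Q_c$.

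For the dynamical estimate \eqref{est2}, set $\tilde x_i:=x_i+y_i$, so that $\tilde R_i(t)=Q_{c_i}(\cdot-\tilde x_i(t))$ and $(\tilde R_i)_t=-\tilde x_i'\,(\tilde R_i)_x$. Writing $\epsilon=u-\sum_j\tilde R_j$ and using both the equation for $u$ and the soliton identity $\partial_x\big((\tilde R_j)_{xx}+\tilde R_j^{\,p}-c_j\tilde R_j\big)=0$, I would derive the evolution equation for $\epsilon$, of the schematic form
$$\epsilon_t=\mathcal L\epsilon+\sum_j(\tilde x_j'-c_j)(\tilde R_j)_x+\mathcal N,$$
where $\mathcal L$ is the linearized (gKdV) operator around $\sum_j\tilde R_j$ and $\mathcal N$ gathers the genuinely nonlinear terms and the soliton-soliton interaction terms. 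Differentiating \eqref{orthogonality_prop} gives $\int\epsilon_t(\tilde R_i)_x\,dx+\int\epsilon(\tilde R_i)_{xt}\,dx=0$; substituting the expression for $\epsilon_t$ and integrating by parts produces, after collecting terms, a linear system
$$M(t)\,\big(\tilde x_i'(t)-c_i\big)_i=b(t),$$
whose matrix $M(t)=\mathrm{diag}\big(\|(Q_{c_i})_x\|_{L^2}^2\big)+O(\tilde\alpha)+O(e^{-c\delta_0 t})$ is again invertible. Each component of $b(t)$ is controlled by the weighted local norms $\big(\int_\RR\epsilon^2 e^{-\sqrt\nu|x-x_i(t)|}\,dx\big)^{1/2}$ — because the fields $\tilde R_i$, $(\tilde R_i)_x$, $(\tilde R_i)_{xx}$ decay like $e^{-\sqrt{c_i}|x-\tilde x_i|}\le e^{-\sqrt\nu|x-x_i|+C}$ — together with pure interaction terms, which are bounded by $e^{-\frac14\nu^{3/2}t}$ using the separation $\ge\delta_0 t\ge\nu t$ and the decay rate $\sqrt\nu$. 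Inverting $M(t)$ then yields \eqref{est2}.

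The main obstacle I anticipate is the careful accounting in $b(t)$: one must verify that every cross term between distinct solitons, and between $\epsilon$ and the off-center fields, is either absorbed into a weighted local norm centered at the correct $x_i(t)$ or bounded by the exponential $e^{-\frac14\nu^{3/2}t}$, with the separation hypothesis \eqref{mindelta} used to convert the spatial separation $\ge\delta_0 t$ into time-exponential decay at precisely the rate $\frac14\nu^{3/2}$. Anchoring the weight at $x_i(t)$ rather than at $\tilde x_i(t)$ is harmless since $|y_i|\le C\tilde\alpha$ is bounded, and the $\mathscr{C}^1$ regularity of the $y_i$ together with the smoothness of $u$ in the relevant dual pairings justifies all the time differentiations. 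This follows the scheme of Martel \cite{martel}; the only new point is that $x_i(t)$ need not equal $c_it+y_i$, which is accommodated because only the lower bounds \eqref{mindelta} on $x_{i+1}-x_i$ and on $x_i'$ are actually used.
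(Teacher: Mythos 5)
Your proposal is correct and follows essentially the same route as the paper: existence, uniqueness and $\mathscr{C}^1$ regularity of the $y_i$ via a (uniform-in-$t$) implicit function theorem with a diagonally dominant Jacobian, and the dynamical bound \eqref{est2} by writing the equation satisfied by $\epsilon$, pairing it with $(\tilde{R}_i)_x$, using the differentiated orthogonality relations, the exponential decay of the solitons at rate $\sqrt{\nu}$, and the interaction bounds coming from the separation hypothesis \eqref{mindelta} to produce the $e^{-\frac14\nu^{3/2}t}$ error. The paper's own proof is exactly this scheme, given as a sketch with references to Martel--Merle--Tsai and Martel--Merle for the IFT step; your only (harmless) slip is invoking continuity of $c\mapsto Q_c$ where what is actually used is the Lipschitz dependence of $Q_{c_i}$ on translations.
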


\begin{proof}
Recall that the proof of existence and uniqueness of the functions $y_i(t)$ is based on the implicit function theorem. We refer to \cite[proof of Lemma 8]{tsai} and also to \cite[paragraph 2.3]{mm_instabilite} for a complete proof in the case of one soliton. Moreover, estimate (\ref{est2}) which involves $\nu\le c_1$ is obtained formally by writing the equation of $\epsilon$, that is
\begin{equation}
\epsilon_t+\partial^3_x\epsilon=\sum_{i=1}^N(x_i'+y_i'-c_i)(\tilde{R}_i)_x-\left(\left(\epsilon+\sum_{i=1}^N\tilde{R}_i\right)^p-\sum_{i=1}^N\tilde{R}_i^p\right)_x,
\end{equation} by multiplying it by $(\tilde{R}_i)_x$, and by using the following properties:
\begin{equation}
0=\frac{d}{dt}\int_{\RR}\epsilon(\tilde{R}_i)_x\;dx=\int_\RR\partial_t\epsilon\partial_x\tilde{R}_i\;dx-(x_i'+y_i')\int_\RR\epsilon\partial^2_x\tilde{R}_i\;dx;
\end{equation}
$\forall\;i\neq j,\:\forall\;t\geq T_2$,
\begin{equation}\label{ineq_Rj}
|\tilde{R}_i(t,x)|+|\partial_x\tilde{R}_i(t,x)|\leq Ce^{-\sqrt{\nu}|x-x_i(t)|};
\end{equation}
\begin{equation}\label{ineg_interaction}
\int_\RR\left\{\tilde{R}_i(t,x)\tilde{R}_j(t,x)+|\partial_x\tilde{R}_i(t,x)\partial_x\tilde{R}_j(t,x)|\right\}\;dx\leq Ce^{-\frac{\nu^{\frac{3}{2}}}{2}t}.
\end{equation}
Note that \eqref{ineg_interaction} is a consequence of the decoupling assumption \eqref{mindelta}. We refer to \cite{martel} and the references therein for more details. 
\end{proof}

\noindent \textit{Step 2: Monotonicity properties for localized mass and some modified energy of $u$} \\

Let $\psi:x\mapsto\frac{2}{\pi}\mathrm{Arctan}\left(e^{-\frac{\sqrt{\nu}}{2}x}\right)$  be defined on $\RR$ so that for all $x\in\RR$,
\[ \psi '(x)\leq 0,\quad \quad |\psi '(x)|\leq\frac{\sqrt{\nu}}{\pi}e^{-\frac{\sqrt{\nu}}{2}|x|},\quad\quad |\psi^{(3)}(x)|\leq\frac{\nu}{4}|\psi '(x)|,\] (we recall $\nu=\min\{c_1,\delta_0\}$).
Then define on $\RR^+\times\RR$:
\begin{equation}
\forall\;i\in\{1,\ldots,N-1\},\quad \psi_i:(t,x)\mapsto \psi\left(x-\frac{x_i(t)+x_{i+1}(t)}{2}\right), \quad \psi_N:(t,x)\mapsto 1
\end{equation}
and also 
\begin{equation}
\phi_1:=\psi_1, \quad
\forall\;i\in\{2,\ldots,N-1\},\quad \phi_i:=\psi_i-\psi_{i-1}, \quad
\phi_N:=1-\psi_{N-1}.
\end{equation}

\begin{Rq}
Note that by definition and by (\ref{mindelta}), for $t>0$, $\phi_i(t)$ takes values close to 1 in a neighborhood of $x_i(t)$ and takes values close to 0 around $x_j(t)$ for $j\neq i$.
\end{Rq}

Take $\kappa\in\big(0,\frac{c_1}{4}\big)$ and consider now for all $i\in\{1,\ldots,N-1\}$ the following quantities: 
\begin{itemize}
\item (localized mass of $u$ at the left of $\frac{x_i(t)+x_{i+1}(t)}{2}$)
\begin{equation}
\mathcal{M}_i(t):=\int_\RR u^2(t,x)\psi_i(t,x)\;dx
\end{equation}
\item (modified localized energy of $u$ at the left of $\frac{x_i(t)+x_{i+1}(t)}{2}$) 
\begin{equation}
\tilde{\mathcal{E}}_i(t):=\int_\RR \left(\frac{1}{2}u_x^2-\frac{1}{p+1}u^{p+1}+\kappa u^2\right)\psi_i(t,x)\;dx.
\end{equation}
\end{itemize}
Let also \begin{equation}
\mathcal{M}_N(t):=\int_\RR u^2(t,x)\psi_N(t,x)\;dx
\end{equation} (which is nothing but the mass of $u$)
and \begin{equation}
\tilde{\mathcal{E}}_N(t):=\int_\RR \left(\frac{1}{2}u_x^2-\frac{1}{p+1}u^{p+1}+\kappa u^2\right)\psi_N(t,x)\;dx
\end{equation} (which is a global quantity linked to the energy of $u$).

\begin{Rq}
The reason why we have to choose $\kappa$ small enough appears clearly in Step 3 (see Remark \ref{rk:kappa}).
\end{Rq}

\noindent We claim now a monotonicity result on the preceding quantities.

\begin{Lem}\label{lem:mono}
There exist $T_1\geq T_0$ and $K_1\geq 0$ such that for all $t\geq T_1$ and for all $i\in\{1,\ldots,N\}$,
\begin{equation}\label{prop_monotonicity}
\frac{d\mathcal{M}_i}{dt}(t)\geq -K_1e^{-\frac{\nu^{\frac{3}{2}}}{4}t}\qquad\text{and}\qquad
\frac{d\tilde{\mathcal{E}}_i}{dt}(t)\geq -K_1e^{-\frac{\nu^{\frac{3}{2}}}{4}t}.
\end{equation}
\end{Lem}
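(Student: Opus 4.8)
The plan is to differentiate each functional along the flow, extract from the transport and dispersive terms a part of definite sign, and show that everything else is either absorbed by that good part or is exponentially small in $t$. The case $i=N$ is immediate: since $\psi_N\equiv1$, the quantity $\mathcal{M}_N$ is the (conserved) mass and $\tilde{\mathcal{E}}_N$ is energy $+\,\kappa$ mass, so both derivatives vanish and the estimates hold trivially. So fix $i\in\{1,\dots,N-1\}$ and abbreviate $m_i(t):=\frac{x_i(t)+x_{i+1}(t)}{2}$ and $\sigma:=m_i'=\frac{x_i'+x_{i+1}'}{2}$, which by \eqref{mindelta} satisfies $\sigma\ge\delta_0\ge\nu$; note also that $\psi_i'\le 0$ and that $\psi_i'$ is concentrated near $m_i(t)$, whereas by \eqref{mindelta} every soliton center $x_j(t)$ lies at distance $\ge\frac{\delta_0}{2}t$ from $m_i(t)$.

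For the mass I would use $u_t=-u_{xxx}-(u^p)_x$ and integrate by parts repeatedly to reach
\begin{equation*}
\frac{d\mathcal{M}_i}{dt}=-3\int_\RR u_x^2\,\psi_i'\,dx-\sigma\int_\RR u^2\,\psi_i'\,dx+\int_\RR u^2\,\psi_i'''\,dx+\frac{2p}{p+1}\int_\RR u^{p+1}\,\psi_i'\,dx.
\end{equation*}
The first term is nonnegative, and since $\sigma\ge\nu$ while $|\psi_i'''|\le\frac{\nu}{4}|\psi_i'|$, the second and third combine into a coercive term $\ge\frac{3\nu}{4}\int_\RR u^2|\psi_i'|\,dx$. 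It then remains to control the nonlinear term, which I would do by splitting at the midpoint. Where $x$ is far from $m_i(t)$ (at distance proportional to $\delta_0 t$) the weight obeys $|\psi_i'|\le\frac{\sqrt\nu}{\pi}e^{-\frac{\sqrt\nu}{2}|x-m_i|}$, so the uniform $H^1$ bound on $u$ gives an exponentially small remainder; where $x$ is near $m_i(t)$, every $\tilde R_j$ is exponentially small (its center being at distance $\gtrsim\delta_0 t$), so by \eqref{est1} one has $\|u\|_{L^\infty}\le C\tilde\alpha+Ce^{-ct}$ there, whence $\frac{2p}{p+1}\int_{\mathrm{near}}|u|^{p+1}|\psi_i'|\le\|u\|_{L^\infty}^{p-1}\int_\RR u^2|\psi_i'|$ is absorbed by $\frac{3\nu}{4}\int_\RR u^2|\psi_i'|$ once $\tilde\alpha$ is small and $t\ge T_1$ is large. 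Matching the exponent to the interaction estimate \eqref{ineg_interaction} (recall $\delta_0\ge\nu$) yields the stated rate $\frac{\nu^{3/2}}{4}$.

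For the modified energy I would write $\tilde{\mathcal{E}}_i=E_i+\kappa\mathcal{M}_i$, with $E_i:=\int_\RR(\frac12u_x^2-\frac1{p+1}u^{p+1})\psi_i\,dx$, so that $\frac{d\tilde{\mathcal{E}}_i}{dt}=\frac{dE_i}{dt}+\kappa\frac{d\mathcal{M}_i}{dt}$ and the second summand is already treated above. Differentiating $E_i$ and exploiting the KdV structure — the decisive algebraic point being that $u_{xxx}+(u^p)_x=-u_t$, which lets the leading contribution be rewritten as $-\frac12\int_\RR w^2\psi_i'\,dx$ with $w:=u_{xx}+u^p$ — the computation produces the manifestly nonnegative terms $-\frac32\int_\RR u_{xx}^2\psi_i'$ and $-\frac{\sigma}{2}\int_\RR u_x^2\psi_i'$, supplemented through the $\kappa$ mass term by the coercive weights $\kappa\sigma\int_\RR u^2|\psi_i'|$ and $3\kappa\int_\RR u_x^2|\psi_i'|$, together with a $\psi_i'''$ contribution bounded as before and nonlinear errors of the schematic form $u^{2p}$, $u^pu_{xx}$, $u^{p-1}u_x^2$, $u^{p+1}$ integrated against $\psi_i'$. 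The point of adding a multiple of the mass is precisely to make $u^2$, $u_x^2$ and $u_{xx}^2$ all appear with good weights, so that each nonlinear error — after the same near/far dichotomy and a Young inequality for the cross term $u^pu_{xx}$ — is either exponentially small (far region) or bounded by $\|u\|_{L^\infty}^{p-1}$ times a good quadratic term and hence absorbable (near region), giving again a lower bound $-K_1e^{-\frac{\nu^{3/2}}{4}t}$.

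The main obstacle is the energy computation: performing the integrations by parts without error, isolating the coercive combination, and verifying that every nonlinear and $\psi_i'''$ remainder can genuinely be swallowed by the available good terms. This is where one needs $\tilde\alpha$ small, $t$ large, and a positive $\kappa$ to supply the $u^2$ coercivity; the sharper restriction $\kappa<\frac{c_1}{4}$ is not required for the monotonicity itself but is dictated by the coercivity estimate used later in Step~3. The remaining ingredients — the mass identity and the near/far splitting — are routine once the weight bounds on $\psi$ and the separation \eqref{mindelta} are in hand, and follow Martel \cite{martel} and the references cited there.
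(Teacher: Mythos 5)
Your proposal is correct and follows essentially the same route as the paper: differentiate $\mathcal{M}_i$ and $\tilde{\mathcal{E}}_i$ along the flow, keep the good-sign transport and dispersive terms, bound the $\psi_i'''$ contribution by $\frac{\nu}{4}|\psi_i'|$, and absorb the nonlinear terms via the near/far splitting around the midpoint $\frac{x_i(t)+x_{i+1}(t)}{2}$ (smallness of $u$ between the solitons, exponential smallness of $\psi_i'$ elsewhere), with the $\kappa$-mass term supplying the $u^2$ coercivity. The only cosmetic differences are that you expand the square $(u_{xx}+u^p)^2$ and re-absorb the cross term $u^pu_{xx}$ by Young's inequality, where the paper keeps that square as a manifestly nonnegative term, and your observation that the restriction $\kappa<\frac{c_1}{4}$ is needed only for Step 3 agrees with the paper's Remark \ref{rk:kappa}.
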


\begin{proof}
First we observe that
$$\frac{d\mathcal{M}_i}{dt}=-\int_\RR \left(3u_x^2+\frac{x_i'+x_{i+1}'}{2}u^2-\frac{2p}{p+1}u^{p+1}\right)\psi_i'\;dx+\int_\RR u^2\psi_i^{(3)}\;dx.$$
For all $\eta_0>0$, there exists $T_{\eta_0}\geq 0$ such that for all $t\geq T_{\eta_0}$,
$$\left\|u(t)-\sum_{i=1}^NQ_{c_i}(\cdot-x_i(t))\right\|_{H^1}\leq\eta_0$$ and for all $R_0>0$, for each $(t,x)\in\RR^+\times\RR$ such that $x_i(t)+R_0\leq x\leq x_{i+1}(t)-R_0$, we have 
\begin{align*}
|u(t,x)|&\leq \displaystyle\sum_{i=1}^N Q_{c_i}(x-x_i(t))+C\left\|u(t)-\sum_{i=1}^NQ_{c_i}(\cdot-x_i(t))\right\|_{H^1} \leq\displaystyle C\sum_{i=1}^Ne^{-\sqrt{c_i}R_0}+\eta_0.
\end{align*}
Thus, for $R_0$ sufficiently large and for $\eta_0>0$ small enough being fixed, we have for some $T_1>T_0$: for all $t\geq T_1$, for all $x\in[x_i(t)+R_0,x_{i+1}(t)-R_0]$, 
$$\frac{2p}{p+1}|u(t,x)|^{p-1}\leq\frac{\nu}{4}.$$
If $x>x_{i+1}(t)-R_0$ or $x<x_i(t)+R_0$, then $$\left|x-\frac{x_i(t)+x_{i+1}(t)}{2}\right|>\frac{x_{i+1}(t)-x_i(t)}{2}-R_0>\frac{\nu t}{2}-R_0.$$
Consequently, for $t\geq T_1$ and $x\notin[x_i(t)+R_0,x_{i+1}(t)-R_0]$, we obtain
\begin{align*}
\left|\psi_i'(t,x)\right|&\leq \frac{\sqrt{\nu}}{\pi}e^{-\frac{\sqrt{\nu}}{2}\big(\frac{x_{i+1}(t)-x_i(t)}{2}-R_0\big)} \leq Ce^{-\frac{\nu^\frac{3}{2}}{4}t}.
\end{align*}
We deduce that 
\begin{equation}
\begin{aligned}
\displaystyle\frac{d\mathcal{M}_i}{dt}(t)&\ge -\int_\RR\left(3u_x^2+\left(\delta_0-\frac{\nu}{4}\right)u^2\right)\psi_i'-\int_\RR\frac{\nu}{4}u^2|\psi_i'|-Ce^{-\frac{\nu^{\frac{3}{2}}}{4}t}\\
&\geq -\int_\RR\left(3u_x^2+\frac{\nu}{2}u^2\right)\psi_i'-Ce^{-\frac{\nu^{\frac{3}{2}}}{4}t} \geq\displaystyle-Ce^{-\frac{\nu^{\frac{3}{2}}}{4}t}.
\end{aligned}
\end{equation}

Similarly, we compute
\begin{equation}
\begin{aligned}
\displaystyle\frac{d\tilde{\mathcal{E}}_i}{dt}=&\:-\displaystyle\int_\RR \left[\left(u_{xx}^2+u^p\right)^2+2u_{xx}^2+\frac{x_i'+x_{i+1}'}{2}u_x^2-\frac{x_i'+x_{i+1}'}{p+1}u^{p+1}\right](\psi_i)_x\;dx\\
&\displaystyle+\int_\RR u_x^2\psi_i^{(3)}\;dx+2p\int_\RR u_x^2u^{p-1}(\psi_i)_x\;dx+\kappa\frac{d\mathcal{M}_i}{dt}\\
\geq&\displaystyle-\nu\int_\RR u_x^2(\psi_i)_x+\frac{\nu}{4}\int_\RR u_x^2(\psi_i)_x+2p\int_\RR u_x^2u^{p-1}(\psi_i)_x\\
&\displaystyle-\kappa\int_\RR 3u_x^2\psi_i'(x)-Ce^{-\frac{\nu^{\frac{3}{2}}}{4}t}+\frac{x_i'+x_{i+1}'}{2}\int_\RR \left(\frac{2}{p+1}u^{p+1}-\kappa u^2\right)(\psi_i)_x.
\end{aligned}
\end{equation}
As before, we can increase $T_1$ and reduce $\eta_0$ to have $$2p\left|\int_\RR u_x^2u^{p-1}(\psi_i)_x\right|\leq \frac{\nu}{4}\int_\RR u_x^2\left|(\psi_i)_x\right|+Ce^{-\frac{\nu^{\frac{3}{2}}}{4}t}$$ and 
$$\left|\frac{2}{p+1}\int_\RR u^{p+1}(\psi_i)_x\right|\leq \frac{\kappa}{2}\int_\RR u^2\left|(\psi_i)_x\right|.$$
Eventually, this leads to 
\begin{align*}
\displaystyle\frac{d\tilde{\mathcal{E}}_i}{dt}&\geq\displaystyle -\frac{3}{4}\nu\int_\RR u_x^2(\psi_i)_x-Ce^{-\frac{\nu^{\frac{3}{2}}}{4}t}+\frac{\kappa(x_i'+x_{i+1}')}{4}\int_\RR u^2\left|(\psi_i)_x\right|\\
&\geq\displaystyle \frac{3}{4}\nu\int_\RR u_x^2\left|(\psi_i)_x\right|+\frac{\kappa\delta_0}{2}\int_\RR u^2\left|(\psi_i)_x\right|-Ce^{-\frac{\sigma^{\frac{3}{2}}}{4}t} \ge -Ce^{-\frac{\nu^{\frac{3}{2}}}{4}t}. \qedhere
\end{align*}
\end{proof}

\noindent \textit{Step 3: A Weinstein type functional}\\

Let the functional $\mathcal{H}$ be given by \begin{equation}
\mathcal{H}:=\displaystyle\sum_{i=1}^N\frac{1}{c_i^2}\int_\RR\left\{\partial_x\epsilon^2+c_i\epsilon^2-p\tilde{R}_i^{p-1}\epsilon^2\right\}\phi_i,
\end{equation}
define \begin{equation}
\mathcal{F}:=\sum_{i=1}^N\frac{1}{c_i^2}\left\{\int_\RR\left(\frac{1}{2}u_x^2-\frac{1}{p+1}u^{p+1}\right)\phi_i+\frac{c_i}{2}\int_\RR u^2\phi_i\right\},
\end{equation}
and set $w(t):=u(t)-\sum_{i=1}^NR_i(t)$, where for all $i=1,\dots,N$,
$$R_i(t):=Q_{c_i}(\cdot-x_i(t)).$$
\noindent We gather next some properties satisfied by $\mathcal{H}$ and $\mathcal{F}$ which are essential to obtain $\|\epsilon(t)\|_{H^1}=\mathrm{O}\left(e^{-\gamma t}\right)$ as $t\to +\infty$, for some $\gamma>0$.

\begin{Lem}\label{lem:prop_step3}
We have 
\begin{enumerate}
\item (coercivity property satisfied by $\mathcal{H}$) \begin{equation}\label{eg3}
\exists\;\lambda_0>0,\:\forall\;t\geq T,\quad \|\epsilon(t)\|_{H^1}^2\leq \lambda_0\mathcal{H}(t)+\frac{1}{\lambda_0}\sum_{i=1}^N\left(\int_\RR\epsilon(t)\tilde{R}_i(t)\right)^2;
\end{equation}
\item (expansion of $\mathcal{H}$)
\begin{equation}\label{eg1}
\displaystyle\mathcal{H}=2\left(\mathcal{F}-\sum_{i=1}^N\frac{1}{c_i^2}\left\{\int_\RR\left(\frac{1}{2}(\partial_xQ_{c_i})^2-\frac{1}{p+1}Q_{c_i}^{p+1}\right)+\frac{c_i}{2}\int_\RR Q_{c_i}^2\phi_i\right\}\right)+g,
\end{equation}
\noindent where $|g(t)|\leq Ce^{-\frac{1}{4}\nu^{\frac{3}{2}}t}+C\tilde{\alpha}\|\epsilon(t)\|^2_{L^2}$;
\item (second expression for $\mathcal{F}$) 
\begin{equation}\label{eg2}
\begin{aligned}
\mathcal{F}=&\displaystyle\sum_{i=1}^{N-1}\left\{\left(\frac{1}{c_i^2}-\frac{1}{c_{i+1}^2}\right)\tilde{\mathcal{E}}_i+\left(\frac{1}{c_i}-\frac{1}{c_{i+1}}\right)\left(\frac{1}{2}-\kappa\left(\frac{1}{c_i}+\frac{1}{c_{i+1}}\right)\right)\mathcal{M}_i\right\}\\
&+\frac{1}{c_N^2}\tilde{\mathcal{E}}_N+\frac{1}{c_N}\left(\frac{1}{2}-\frac{\kappa}{c_N}\right)\mathcal{M}_N;
\end{aligned}
\end{equation}
\item (consequence of the monotonicity properties)
For all $t'\geq t\geq T_1,$ for all $i=1,\dots,N$,
\begin{equation}\label{ineqM}
\mathcal{M}_i(t)-\int_\RR Q_{c_i}^2\leq 2\int_\RR wR_i(t')+\int_\RR w^2\phi_i(t')+Ce^{-\frac{\nu^{\frac{3}{2}}}{4}t};
\end{equation}
\begin{equation}\label{ineqE}
\begin{aligned}
\tilde{\mathcal{E}}_i(t)-\int_\RR\left\{\frac{1}{2}\left(\partial_{x}Q_{c_i}\right)^2-\frac{Q_{c_i}^{p+1}}{p+1}\right\}\leq&\: C\|w(t)\|_{L^\infty}\int_\RR w^2\phi_i(t')-c_i\int_\RR wR_i(t')\\
&\displaystyle+\frac{1}{2}\int_\RR (w_x^2-pR_i^{p-1}w^2)\phi_i(t')+Ce^{-\frac{\nu^{\frac{3}{2}}}{4}t}.
\end{aligned}
\end{equation}
\end{enumerate}
\end{Lem}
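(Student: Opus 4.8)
The plan is to establish the four assertions of Lemma~\ref{lem:prop_step3} in increasing order of difficulty: the summation identity \eqref{eg2}, then the expansion \eqref{eg1}, then the monotonicity consequences \eqref{ineqM}--\eqref{ineqE}, and finally the coercivity \eqref{eg3}, which is the analytic heart. The identity \eqref{eg2} is a pure Abel transformation requiring no analysis. Setting $\psi_0:=0$ and recalling $\psi_N\equiv 1$, the relation $\phi_i=\psi_i-\psi_{i-1}$ gives the telescoping differences $\int u^2\phi_i=\mathcal{M}_i-\mathcal{M}_{i-1}$ and $\int(\tfrac12 u_x^2-\tfrac{1}{p+1}u^{p+1})\phi_i=(\tilde{\mathcal{E}}_i-\kappa\mathcal{M}_i)-(\tilde{\mathcal{E}}_{i-1}-\kappa\mathcal{M}_{i-1})$, since $\tilde{\mathcal{E}}_i=\int(\tfrac12 u_x^2-\tfrac{1}{p+1}u^{p+1}+\kappa u^2)\psi_i$ and $\mathcal{M}_i=\int u^2\psi_i$. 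Substituting into $\mathcal{F}$ and summing by parts, the boundary index $i=N$ produces $\tfrac{1}{c_N^2}\tilde{\mathcal{E}}_N+\tfrac{1}{c_N}(\tfrac12-\tfrac{\kappa}{c_N})\mathcal{M}_N$, while the interior coefficient of $\mathcal{M}_i$ reorganizes through $\tfrac{1}{c_i^2}-\tfrac{1}{c_{i+1}^2}=(\tfrac{1}{c_i}-\tfrac{1}{c_{i+1}})(\tfrac{1}{c_i}+\tfrac{1}{c_{i+1}})$ into exactly the stated form.

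For the expansion \eqref{eg1} I would substitute $u=\sum_i\tilde R_i+\epsilon$ into $\mathcal{F}$ and collect by order in $\epsilon$. The order-zero part reproduces the soliton constants displayed in \eqref{eg1} up to cross-interaction integrals $\int\tilde R_i\tilde R_j$ with $i\neq j$ and the terms where $\phi_i$ meets $\tilde R_j$ with $j\neq i$, all of which are $O(e^{-\frac14\nu^{3/2}t})$ by \eqref{ineg_interaction} and the separation \eqref{mindelta}. The order-one part is treated by integrating by parts and invoking the soliton equation $Q_{c_i}''+Q_{c_i}^p=c_iQ_{c_i}$, which annihilates the bulk, together with the orthogonality \eqref{orthogonality_prop}; the surviving commutator terms, supported where $\phi_i'\neq 0$ and hence where $\tilde R_i$ is exponentially small, are absorbed into $g$. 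The order-two part is precisely $\tfrac12\mathcal{H}$, once $p(\sum_j\tilde R_j)^{p-1}$ is replaced by $p\tilde R_i^{p-1}$ on the support of $\phi_i$ at exponentially small cost, and the cubic and higher remainders of $u^{p+1}$ are bounded by $C\tilde\alpha\|\epsilon\|_{L^2}^2$ using $\|\epsilon\|_{H^1}\le K\tilde\alpha$ from \eqref{est1} and the resulting $L^\infty$-smallness. Collecting these yields \eqref{eg1} with $g$ obeying the stated bound.

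The inequalities \eqref{ineqM} and \eqref{ineqE} follow by integrating Lemma~\ref{lem:mono}: for $t'\ge t\ge T_1$ one gets $\mathcal{M}_i(t)\le\mathcal{M}_i(t')+Ce^{-\frac14\nu^{3/2}t}$, and likewise for $\tilde{\mathcal{E}}_i$. Evaluating the right-hand side through $u=\sum_iR_i+w$ and Taylor-expanding the integrands $u^2$, $u_x^2$ and $u^{p+1}$, the order-zero term gives the soliton mass $\int Q_{c_i}^2$ (resp. the soliton energy $\int\{\tfrac12(Q_{c_i}')^2-\tfrac{1}{p+1}Q_{c_i}^{p+1}\}$), the order-one term gives $2\int wR_i$ (resp. $-c_i\int wR_i$, again after using the soliton equation on the nonlinear part), the order-two term gives $\int w^2\phi_i$ (resp. $\tfrac12\int(w_x^2-pR_i^{p-1}w^2)\phi_i$), the cubic remainder is controlled by $C\|w\|_{L^\infty}\int w^2\phi_i$, and all off-diagonal soliton products and $\psi$-tail contributions are $O(e^{-\frac14\nu^{3/2}t})$ via \eqref{ineg_interaction}.

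The coercivity \eqref{eg3} is the \emph{main obstacle}, as it rests on the spectral theory of the linearized operator. For a single soliton, the Weinstein estimate provides $\lambda>0$ with
\[ \int\{(\partial_x v)^2+c_iv^2-pQ_{c_i}^{p-1}v^2\}\ge\lambda\|v\|_{H^1}^2-\tfrac1\lambda\big((\textstyle\int vQ_{c_i})^2+(\int vQ_{c_i}')^2\big). \]
I would localize this estimate: since $\sum_i\phi_i\equiv 1$ and the gaps $x_{i+1}-x_i\ge\delta_0 t$ force both $\phi_i'$ on the support of $\tilde R_i$ and the off-diagonal potentials $\tilde R_j^{p-1}$ ($j\neq i$) to be exponentially small, the quadratic form $\mathcal{H}$ dominates $\sum_i\frac{1}{c_i^2}\int\{(\partial_x\epsilon)^2+c_i\epsilon^2-pQ_{c_i}^{p-1}\epsilon^2\}\phi_i$ up to errors of size $O(e^{-\frac14\nu^{3/2}t}\|\epsilon\|_{H^1}^2)$. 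Applying the single-soliton estimate to each localized piece and summing, the translation directions are eliminated by \eqref{orthogonality_prop} $\int\epsilon(\tilde R_i)_x=0$, while the unorthogonalized scaling directions survive and reappear as the correction $\frac{1}{\lambda_0}\sum_i(\int\epsilon\tilde R_i)^2$; absorbing the exponentially small errors for $t$ large then gives \eqref{eg3}. The delicate point is controlling the commutators between the cutoffs $\phi_i$ and $-\partial_x^2$, which is precisely where the sharp separation \eqref{mindelta} is indispensable.
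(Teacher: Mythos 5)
Your proposal is correct and follows essentially the same route as the paper: Abel transformation for \eqref{eg2}; substitution of the definition \eqref{def:epsilon} of $\epsilon$ combined with the soliton equation, the interaction estimate \eqref{ineg_interaction}, and the smallness \eqref{est1} for \eqref{eg1}; integration of the monotonicity of Lemma \ref{lem:mono} between $t$ and $t'$ followed by expansion of $u(t')$ in terms of $w(t')$ for \eqref{ineqM}--\eqref{ineqE}; and a localized Weinstein-type coercivity under the orthogonality conditions \eqref{orthogonality_prop} (as in Martel--Merle--Tsai) for \eqref{eg3}, with the unorthogonalized scaling direction correctly identified as the source of the correction term $\frac{1}{\lambda_0}\sum_i\left(\int_\RR\epsilon\tilde{R}_i\right)^2$. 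The paper's own proof is only a brief sketch deferring to references, so your added detail is a faithful expansion of the same argument.
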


\begin{proof}[Proof of Lemma \ref{lem:prop_step3}]
We only give some indications, and particularly the key ingredients. Property \eqref{eg2} is obtained by Abel transformation. Now, we focus on the other lines. \\
Estimate \eqref{eg3} is a consequence of a localized version around $\phi_i$ of the coercivity property of the linearized operator around $Q_{c_i}$ (for all $i=1,\dots,N$), which holds under the orthogonality conditions \eqref{orthogonality_prop} satisfied by $\epsilon$; we refer to \cite[proof of Lemma 4]{tsai}. \\
To prove \eqref{eg1}, one has obviously to replace $\epsilon$ by its definition \eqref{def:epsilon}. \\
To finish with, integrate the almost monotonicity properties as expressed in Lemma \ref{lem:mono} between $t$ and $t'$ and use the expression of $u(t')$ in terms of $w(t')$ in order to obtain \eqref{ineqM} and \eqref{ineqE}. \\
Let us mention furthermore that \eqref{eg3}, \eqref{eg1}, \eqref{ineqM}, and \eqref{ineqE} rely all on classical inequalities used in studying quantities which are localized near the solitons, and which write in the present context as follows:
\begin{align}
\forall\;i\neq j,&\qquad\left(R_i(t,x)+|\partial_xR_i(t,x)|\right)\phi_j(t,x)\le Ce^{-\frac{\nu^{\frac{3}{2}}}{4}t}e^{-\frac{\sqrt{\nu}}{4}|x-x_i(t)|}\\
\forall\;i,j,&\qquad\left(R_i(t,x)+|\partial_xR_i(t,x)|\right)|\partial_x\phi_j(t,x)|\le Ce^{-\frac{\nu^{\frac{3}{2}}}{4}t}e^{-\frac{\sqrt{\nu}}{4}|x-x_i(t)|} \\
\forall\;i,&\qquad R_i(t,x)\left(1-\phi_i(t,x)\right)\le Ce^{-\frac{\nu^{\frac{3}{2}}}{4}t}e^{-\frac{\sqrt{\nu}}{4}|x-x_i(t)|}. \qedhere
\end{align}

\end{proof}

Let us now explain how to conclude the proof. Note that (\ref{eg1}), (\ref{eg2}), (\ref{ineqM}), and (\ref{ineqE}) lead to: $\forall\;t'\geq t\geq T,$
\begin{equation}
\begin{aligned}
\mathcal{H}(t)\leq&\displaystyle\: Ce^{-\frac{\nu^{\frac{3}{2}}}{4}t}+|g(t)|\\
&\displaystyle+2\sum_{i=1}^{N-1}\left(\frac{1}{c_i^2}-\frac{1}{c_{i+1}^2}\right)\bigg(C\|w(t')\|_{L^\infty}\int_\RR w^2\phi_i(t')-c_i\int_\RR w(t')R_i(t')\phi_i(t')\\
&\qquad\qquad\qquad\qquad\qquad\displaystyle+\frac{1}{2}\int_\RR\left\{(\partial_xw)^2(t')-pR_i^{p-1}w^2(t')\right\}\phi_i(t')\bigg)\\
&\displaystyle+2\sum_{i=1}^{N-1}\left(\frac{1}{c_i}-\frac{1}{c_{i+1}}\right)\left(\frac{1}{2}-\kappa\left(\frac{1}{c_i}+\frac{1}{c_{i+1}}\right)\right)\left(2\int_\RR wR_i(t')+\int_\RR w^2\phi_i(t')\right)\\
&\displaystyle+\frac{2}{c_N^2}\left(C\|w(t')\|_{L^\infty}\int_\RR w^2\phi_N(t')+\frac{1}{2}\int_\RR\left\{(\partial_xw)^2-pR_N^{p-1}w^2(t')\right\}\phi_N(t')\right)\\
&\displaystyle+\frac{2}{c_N}\left(\frac{1}{2}-\frac{\kappa}{c_N}\right)\left(2\int_\RR wR_N(t')+\int_\RR w^2\phi_N(t')\right)-\frac{2}{c_N}\int_\RR wR_N(t'),\\
\end{aligned} 
\end{equation}
that is to: $\forall\;t'\geq t\geq T,$

\begin{multline}
\mathcal{H}(t)\le \sum_{i=1}^N\frac{1}{c_i^2}\left\{(\partial_xw)^2+c_iw^2(t')-pR_i^{p-1}(t')w^2(t')\right\}\phi_i(t') \\
+ C\|w(t')\|_{L^\infty}\|w(t')\|^2_{L^2}+C\tilde{\alpha}\|\epsilon(t)\|^2_{L^2}+
Ce^{-\frac{\nu^{\frac{3}{2}}}{4}t},
\end{multline}
where $C$ does not depend on $\tilde{\alpha}$.

\begin{Rq}\label{rk:kappa}
Note that the monotonicity property \eqref{ineqM} can indeed be used in the preceding estimates since $\kappa$ verifies $\kappa\left(\frac{1}{c_i}+\frac{1}{c_{i+1}}\right)<\frac{1}{2}$ and $\frac{\kappa}{c_N}<\frac{1}{2}$.
\end{Rq}

\noindent Assumption (\ref{lim0}) tells us exactly that $\|w(t')\|_{H^1}\underset{t'\to+\infty}{\longrightarrow} 0,$ thus we obtain \begin{equation}\label{ineq_H}
\forall\;t\geq T,\quad \mathcal{H}(t)\leq Ce^{-\frac{\nu^{\frac{3}{2}}}{4}t}+C\alpha\|\epsilon(t)\|_{L^2}^2.
\end{equation}

\begin{Rq}
Notice that it is important to consider $w$ instead of $\epsilon$ in estimates \eqref{ineqM} and \eqref{ineqE} to obtain \eqref{ineq_H}, thus to improve the a priori control of $\mathcal{H}$ by $\mathrm{O}\left(\|\epsilon\|_{H^1}^2\right)$.
\end{Rq}

\noindent Then, by (\ref{eg3}) and by the following estimate
\begin{equation}
\sum_{i=1}^N\left(\int_\RR\epsilon(t)\tilde{R}_i(t)\right)^2\leq Ce^{-\frac{\nu^{\frac{3}{2}}}{4}t}\|\epsilon(t)\|_{L^2}+C\tilde{\alpha}\|\epsilon(t)\|^2_{L^2}
\end{equation}
(see Martel \cite[Step 3]{martel}) for a proof), there exists $C_0>0$ such that for all $t \geq T$,
\begin{align*}
\|\epsilon(t)\|^2_{H^1}&\leq\displaystyle Ce^{-\frac{\nu^{\frac{3}{2}}}{4}t}+C\tilde{\alpha}\|\epsilon(t)\|^2_{H^1}+C\sum_{i=1}^N\left(\int_\RR\epsilon(t)\tilde{R}_i(t)\right)^2 \leq C_0e^{-\frac{\nu^{\frac{3}{2}}}{4}t}+C_0\tilde{\alpha}\|\epsilon(t)\|^2_{H^1}.
\end{align*}

\noindent Now, due to the independence of $C_0$ with respect to $\tilde{\alpha}$, even if it means taking a smaller $\tilde{\alpha}$ so that $ C_0\tilde{\alpha}<1$, we infer 
\begin{equation}\label{estepsilon}
\forall\;t\geq T,\qquad \|\epsilon(t)\|_{H^1}^2\leq Ce^{-\frac{\nu^{\frac{3}{2}}}{4}t}.
\end{equation}
By (\ref{est2}), this implies that 
$$\forall\;t\geq T,\qquad |x_i'(t)+y_i'(t)-c_i|\leq Ce^{-\frac{\nu^{\frac{3}{2}}}{8}t},$$ which leads to the existence of $y_i\in\RR$ such that $$x_i(t)+y_i(t)-c_it\underset{t\to+\infty}{\longrightarrow}y_i$$
and \begin{equation}\label{ineq:param}
|x_i(t)+y_i(t)-c_it-y_i|\le Ce^{-\frac{\nu^{\frac{3}{2}}}{8}t}.
\end{equation}

\noindent Hence, using (\ref{estepsilon}), the triangular inequality, the following estimate
$$\|Q_{c_i}(\cdot-x_i(t)-y_i(t))-Q_{c_i}(\cdot-c_it-y_i)\|_{H^1}\le C|x_i(t)+y_i(t)-c_it-y_i|$$
(which is a consequence of Lemma \ref{lem:Q} below), and \eqref{ineq:param}, we have \begin{equation}\label{expconvsigma}
\left\|u(t)-\sum_{i=1}^NQ_{c_i}(\cdot-c_it-y_i)\right\|_{H^1}\leq Ce^{-\frac{\nu^{\frac{3}{2}}}{8}t}.
\end{equation}

\begin{Lem}\label{lem:Q}
For all $i=1,\dots,N$, for all $r\ge 0$, and for all $s\in\NN^*$, 
$$\left\|\partial_{x}^sQ_{c_i}(\cdot-r)-\partial_{x}^sQ_{c_i}\right\|_{L^2}^2\le \left(\|\partial_x^{s+1}Q_{c_i}\|_{L^2}^2+(r+2z_s)\|\partial_x^{s+1}Q_{c_i}\|_{L^\infty}^2\right)r^2,$$
where $z_s:=\max\left(\left(\partial_x^{s+1}Q_{c_i}\right)^{-1}(\{0\})\cap\RR^*_+\right)$.
\end{Lem}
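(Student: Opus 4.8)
The plan is to reduce everything to the elementary identity
\[
\partial_x^sQ_{c_i}(x-r)-\partial_x^sQ_{c_i}(x)=-\int_{x-r}^{x}\partial_x^{s+1}Q_{c_i}(y)\,dy,
\]
so I will write $g:=\partial_x^sQ_{c_i}$ and $g':=\partial_x^{s+1}Q_{c_i}$. Since $Q_{c_i}$ is smooth with all derivatives decaying exponentially, $g$ is absolutely continuous and $g'\in L^2(\RR)\cap L^\infty(\RR)$, so the fundamental theorem of calculus applies and both norms on the right-hand side are finite. First I would record the two pointwise bounds that follow: from Cauchy--Schwarz applied to the integral above,
\[
\big|g(x-r)-g(x)\big|^2\le r\int_{x-r}^{x}|g'(y)|^2\,dy,
\]
and, more crudely, $\big|g(x-r)-g(x)\big|\le r\,\|g'\|_{L^\infty}$.

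The cleanest route is then to integrate the first inequality over $x\in\RR$ and exchange the order of integration, using translation invariance of Lebesgue measure: for each fixed $y$, the set of $x$ with $x-r\le y\le x$ is an interval of length exactly $r$, so that $\int_\RR\!\big(\int_{x-r}^{x}|g'|^2\,dy\big)\,dx=r\,\|g'\|_{L^2}^2$, whence $\|g(\cdot-r)-g\|_{L^2}^2\le r^2\|g'\|_{L^2}^2$. This is already \emph{stronger} than the claimed estimate, because the extra term $(r+2z_s)\|g'\|_{L^\infty}^2\,r^2$ is nonnegative; so the lemma follows at once.

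If one prefers to reproduce the stated right-hand side verbatim, I would instead split $\RR$ into the central window $S:=[-z_s,z_s+r]$, of length $r+2z_s$, and its complement. On $S$ I bound the integrand by the $L^\infty$ estimate, which contributes $(r+2z_s)\,r^2\|g'\|_{L^\infty}^2$. On $S^{\complement}$ the interval $[x-r,x]$ is contained in $\{|y|>z_s\}$ (indeed $x<-z_s$ forces $[x-r,x]\subset(-\infty,-z_s)$, and $x>z_s+r$ forces $[x-r,x]\subset(z_s,\infty)$), so applying the Cauchy--Schwarz bound there and the same Fubini step restricted to $\{|y|>z_s\}$ yields $r^2\int_{|y|>z_s}|g'|^2\le r^2\|g'\|_{L^2}^2$. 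Adding the two contributions gives exactly the asserted inequality.

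The only genuine point to check is the interchange of integrals (equivalently, the translation-invariance bookkeeping) together with the finiteness of $z_s$; the latter holds because $\partial_x^{s+1}Q_{c_i}$ is an explicit combination of powers of $\mathrm{sech}$ and $\tanh$ and hence has only finitely many zeros, so its largest positive zero is well defined. There is no real obstacle here, the argument being purely one-dimensional calculus, so the only ``hard part'' is presentational: I would favour the three-line clean bound $\|g(\cdot-r)-g\|_{L^2}^2\le r^2\|g'\|_{L^2}^2$ and simply remark that it implies the stated (slightly lossy) form.
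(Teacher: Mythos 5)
Your proof is correct, and it takes a genuinely different route from the paper's. The paper argues pointwise via the mean value theorem, writing $\partial_x^sQ_{c_i}(x-r)-\partial_x^sQ_{c_i}(x)=-r\,\partial_x^{s+1}Q_{c_i}(\xi_x)$ with $\xi_x\in[x-r,x]$, then splits $\RR$ into the same three regions you use in your second argument and controls the value at $\xi_x$ by monotonicity of $\bigl(\partial_x^{s+1}Q_{c_i}\bigr)^2$ on $\{x\le -z_s\}$ and $\{x\ge z_s+r\}$, ending with the slightly sharper constant $\|\partial_x^{s+1}Q_{c_i}\|_{L^2(|x|\ge z_s)}^2$ in place of the full $L^2$-norm. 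Your primary argument (fundamental theorem of calculus, Cauchy--Schwarz on $[x-r,x]$, Fubini) instead yields the classical translation bound $\|g(\cdot-r)-g\|_{L^2}\le r\|g'\|_{L^2}$, which is strictly stronger than the stated inequality and requires nothing about $Q_{c_i}$ beyond $\partial_x^{s+1}Q_{c_i}\in L^2(\RR)$ --- in particular no $z_s$ at all; your second variant then recovers the stated right-hand side verbatim. What your route buys is robustness: the paper's monotonicity step is actually delicate, since $\bigl(\partial_x^{s+1}Q_{c_i}\bigr)^2$ vanishes at $\pm z_s$ and tends to $0$ at infinity, so it cannot be monotone on all of $[z_s,+\infty)$; it is monotone only past the last critical point of $\partial_x^{s+1}Q_{c_i}$, i.e.\ the last zero of $\partial_x^{s+2}Q_{c_i}$, and the paper's displayed inequality $\bigl(\partial_x^{s+1}Q_{c_i}(\xi_x)\bigr)^2\le\bigl(\partial_x^{s+1}Q_{c_i}(x-r)\bigr)^2$ fails as written for $x$ close to $z_s+r$. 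Your Cauchy--Schwarz/Fubini step is unconditional and sidesteps this issue entirely. The only point worth tightening in your write-up is that well-definedness of $z_s$ requires the zero set to be nonempty as well as finite, but that is presupposed by the statement itself and is irrelevant to your main (stronger) bound.
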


\begin{proof}[Proof of Lemmma \ref{lem:Q}]
By the mean value theorem, we have:
$$\left\|\partial_{x}^sQ_{c_i}(\cdot-r)-\partial_{x}^sQ_{c_i}\right\|_{L^2}^2=\int_\RR\left(\partial_{x}^{s+1}Q_{c_i}(\xi_x)\right)^2r^2\;dx,$$ where $x-r\le\xi_x\le x$ for all $x\in\RR$. \\
Now, split the preceding integral into three regions: $x\le -z_s$, $-z_s\le x\le z_s+r $, and $x\ge z_s+r$. In the first and third regions, use the monotonicity of $\left(\partial_{x}^{s+1}Q_{c_i}\right)^2$. We have:
$$\begin{dcases}
\left(\partial_{x}^{s+1}Q_{c_i}(\xi_x)\right)^2\le \left(\partial_{x}^{s+1}Q_{c_i}(x)\right)^2 & \text{if }
x\le -z_s\\
\left(\partial_{x}^{s+1}Q_{c_i}(\xi_x)\right)^2\le \left(\partial_{x}^{s+1}Q_{c_i}(x-r)\right)^2 & \text{if }
x\ge z_s+r.
\end{dcases}$$
If $-z_s\le x\le z_s+r$, we have $$\left(\partial_{x}^{s+1}Q_{c_i}(\xi_x)\right)^2\le \|\partial_{x}^{s+1}Q_{c_i}\|_{L^\infty}^2.$$
Thus, we obtain:
$$\left\|\partial_{x}^sQ_{c_i}(\cdot-r)-\partial_{x}^sQ_{c_i}\right\|_{L^2}^2\le \left(\|\partial_x^{s+1}Q_{c_i}\|_{L^2(|x|\ge z_s)}^2+(r+2z_s)\|\partial_x^{s+1}Q_{c_i}\|_{L^\infty}^2\right)r^2,$$
which puts an end to the proof.
\end{proof}

At the stage of \eqref{expconvsigma}, it suffices to see a posteriori that $\delta_0\leq c_1$ in order to obtain exactly (\ref{limexp}). Let us justify it briefly. \\

\noindent For all $i\in\{1,\ldots,N\}$, set $f_i(t,x):=Q_{c_i}(x-x_i(t))-Q_{c_i}(x-c_it-y_i)$. \\
From (\ref{lim0}) and (\ref{expconvsigma}), we deduce
\begin{equation}\label{lim0sum}
\left\|\sum_{i=1}^Nf_i(t)\right\|_{H^1}\to 0, \qquad \text{as } t\to +\infty.
\end{equation}
Define $th_p(x):=\mathrm{tanh}\big(\frac{p-1}{2}x\big)$. A direct computation yields $$\forall\;x\in\RR,\quad Q'(x)=-th_p(x)Q(x),$$
from which we have for all $i\in\{1,\ldots,N\}$
\begin{align*}
\partial_xf_i(t,x)=&-\sqrt{c_i}th_p\big(\sqrt{c_i}(x-c_it-y_i)\big)f_i(t,x)\\
&+\sqrt{c_i}\Big(th_p\big(\sqrt{c_i}(x-x_i(t))\big)-th_p\big(\sqrt{c_i}(x-c_it-y_i)\big)\Big)Q_{c_i}(x-x_i(t)).
\end{align*}

\noindent By (\ref{lim0sum}), we have in particular $$\int_{\RR}\left(\sum_{i=1}^N\partial_xf_i(t,x)\right)^2\;dx\to 0, \qquad \text{as } t\to +\infty.$$ 

\noindent Now, it follows from Lebesgue's dominated convergence theorem that 
$$\left\|\sum_{i=1}^N\sqrt{c_i}f_i(t)\right\|_{L^2}\to 0, \qquad \text{as } t\to +\infty.$$
Combining this result with (\ref{lim0sum}) and due to the fact that the speeds $c_i$ are distinct two by two, we obtain successively for $k$ describing the integers from $N-1$ to $1$:
$$\left\|\sum_{i=1}^{k}\big(\sqrt{c_i}-\sqrt{c_{k+1}}\big)f_i(t)\right\|_{L^2}\to 0, \qquad \text{as } t\to +\infty.$$

\noindent Hence $\|f_1(t)\|_{L^2}\underset{t\to+\infty}{\longrightarrow} 0$ and even $\|f_1(t)\|_{H^1}\underset{t\to+\infty}{\longrightarrow} 0$ judging by the expression of $\partial_xf_1$.  This implies $x_1(t)-c_1t-y_1\underset{t\to+\infty}{\longrightarrow} 0$. Now it is clear that condition $x_1'(t)\geq\delta_0$ forces to have $c_1\geq \delta_0$.

\bibliographystyle{plain}
\bibliography{references}

\end{document}